\def\MR#1{\href{http://www.ams.org/mathscinet-getitem?mr=#1}{MR#1}}
\def\begbib{\begin{itemize}}
\def\endbib{\end{itemize}}
\newcommand{\comment}[1]{}
\newcommand{\notes}{\paragraph{Notes and Comments}}
\newcommand{\oned}{one dimensional}
\newcommand{\te}{\to}
\newcommand{\ocdc}{op\'erateur carr\'e du champ}
\newcommand{\sqf}{square field operator}
\newcommand{\delm}{\Delta}
\newcommand{\gen}{{\cal G}}
\newcommand{\fint}{\int}
\newcommand{\tell}{{\tau_\ell}}
\newcommand{\QX}{ X}
\newcommand{\ball}{D}
\newcommand{\coloneqq}{:=}
\newcommand{\lb}[1]{\label{#1}}
\newtheorem{Theorem}{Theorem}
\newtheorem{claim}[Theorem]{Claim}
\theoremstyle{definition}
\newcommand {\hf} { \mbox{$ {1 \over 2 }$} }
\newcommand{\eval}{\left|\begin{array}{c}\\\\\end{array}\right.\!\!\!\!\!\!\!}
\newcommand{\BB}{{\cal B}}
\newcommand{\brk}[1]{\langle #1 \rangle }
\newcommand{\FF}{{\cal F}}
\newcommand{\GG}{{\cal G}}
\newcommand{\ZZ}{{\cal Z}}
\newcommand{\eq}{\begin{equation}}
\newcommand{\en}{\end{equation}}
\newcommand{\re}[1]{\mbox{(\ref{#1})}}
\newcommand {\PR} {\mathbb{P}}
\newcommand {\QR} {\mathbb{Q}}
\newcommand {\SR} {\mathbb{S}}
\newcommand {\ER} {\mathbb{E}}
\newcommand {\reals} {\mathbb{R}}
\newcommand {\matrices} {\mathbb{M}}
\newcommand {\complex} {\mathbb{C}}
\newcommand{\convd}{\mbox{$\ \stackrel{{\rm d} }{\rightarrow} \ $}}
\newcommand{\convas}{\mbox{$\ \stackrel{{\rm a.s.}}{\rightarrow} \ $}}
\newcommand{\ed}{\mbox{$\ \stackrel{d}{=} \ $}}
\newcommand{\Lev}{L\'evy}
\newcommand{\FD}{Feller-Dynkin}
\newcommand{\Erdos}{Erd\H{o}s}
\newcommand{\Ito}{It\^o}
\newcommand{\giv}{{\,|\,}}
\newcommand{\realsp}{\reals_{>0}}
\newcommand{\len}[1] {\lambda_{#1}}
\newcommand{\eps}{\varepsilon}
\newcommand{\EE}{{\cal E}}
\renewcommand{\Bbb} { B^\br}
\newcommand{\Bex} { B^\ex}
\newcommand{\Bme} { B^\me}
\newcommand{\br}{ {\mbox{$\scriptstyle{\rm br}$}}}
\newcommand{\ex}{ {\mbox{$\scriptstyle{\rm ex}$}}}
\newcommand{\me}{ {\mbox{$\scriptstyle{\rm me}$}}}
\newcommand{\underl}[1]{\underline{#1}}
\newcommand{\bb}{B^{\rm br}}
\newcommand{\bex}{B^{\rm \ex}}
\newcommand{\bexm}{B^{ {\rm \ex} \giv m}}
\newcommand{\bext}{B^{{\rm \ex},t}}
\newcommand{\bmet}{B^{{\rm \me},t}}
\newcommand{\bbrt}{B^{{\rm \br},t}}
\newcommand{\bme}{B^{\rm \me}}
\newcommand{\length}[1]{ \lambda _{#1} } 
\newcommand{\xs}{X_*}
\newcommand{\xomega}{ X }
\begin{document}

\begin{frontmatter}

\title{A guide to Brownian motion and related stochastic processes}

\runtitle{Guide to Brownian motion}


\begin{aug}
  \author{Jim Pitman \ead[label=e1]{pitman@berkeley.edu}} 
  \and
  \author{Marc Yor} 

  \address{Dept. Statistics, University of California, \\ 367 Evans Hall \# 3860, Berkeley, CA 94720-3860, USA \\
           \printead{e1}}

\end{aug}

\runauthor{J. Pitman and M. Yor}

\begin{abstract}
This is a guide to the mathematical theory of Brownian
motion and related stochastic processes, with indications of how this theory
is related to other branches of mathematics, most 
notably the classical theory of partial differential equations associated with the Laplace and heat 
operators, and various generalizations thereof.
As a typical reader, we have in mind a student, familiar with 
the basic concepts of probability based on measure theory, at the level of 
the graduate texts of Billingsley  \cite{bill95} and Durrett \cite{MR2722836}, 
and who wants a broader perspective on the theory of Brownian motion and related stochastic processes than can 
be found in these texts. 
\end{abstract}

\begin{keyword}
\kwd{Markov process}
\kwd{random walk}
\kwd{martingale}
\kwd{Gaussian process}
\kwd{L\'evy process}
\kwd{diffusion}
\end{keyword}

\begin{keyword}[class=AMS]
\kwd[Primary ]{60J65}
\end{keyword}

\end{frontmatter}
\tableofcontents
\section{Introduction}

This is a guide to the mathematical theory of Brownian
motion (BM) and related stochastic processes, with indications of how this theory
is related to other branches of mathematics, most 
notably the classical theory of partial differential equations associated with the Laplace and heat 
operators, and various generalizations thereof.

As a typical reader, we have in mind a student familiar with 
the basic concepts of probability based on measure theory, at the level of 
the graduate texts of Billingsley  \cite{bill95} and 
Durrett \cite{MR2722836}, and who wants a broader perspective on the
theory of BM and related stochastic processes than can be found in these texts. 
The difficulty facing such a student is that there are now too many advanced texts on BM and 
related processes. Depending on what aspects or applications are of interest, one can choose 
from any of the following texts, each of which contains excellent treatments of many facets of
the theory, but none of which can be regarded as a definitive or complete 
treatment of the subject.
\paragraph{General texts on BM and related processes}
\begbib
\item[\cite{fr83bd}]D.~Freedman. \newblock {\em Brownian motion and diffusion} (1983).    
\item[\cite{im65}]K. It{\^o} and H.~P. McKean, Jr. \newblock {\em Diffusion processes and their sample paths} (1965).
\item[\cite{MR86f:60049}]K.~It{\^o}. \newblock {\em Lectures on stochastic processes} (1984).
\item[\cite{kall02f}]O.~Kallenberg. \newblock {\em Foundations of modern probability} (2002).
\item[\cite{MR89c:60096}]I. Karatzas and S.~E. Shreve. \newblock {\em Brownian motion and stochastic calculus} (1988).
\item[\cite{MR3156223}] G. Kallianpur and S. P. Gopinath. {\em Stochastic analysis and diffusion processes} (2014).
\item[\cite{kni81}]F.~B. Knight. \newblock {\em Essentials of {B}rownian motion and diffusion} (1981). 
\item[\cite{MR2604525}] P. M\"orters and Y. Peres. {\em Brownian motion} (2010).
\item[\cite{ry99}]D.~Revuz and M.~Yor. \newblock {\em Continuous martingales and {B}rownian motion} (1999). 
\item[\cite{rw1}] and \cite{rw2} L.~C.~G. Rogers and D.~Williams. \newblock {\em Diffusions, Markov Processes and Martingales, Vols. I and II} (1994).
\endbib
This list does not include more specialized research monographs on 
subjects closely related to BM such as stochastic analysis, stochastic differential geometry, and more general theory of Gaussian and Markov processes.
Lists of such monographs classified by subject can be found in following sections.

\subsection{History}
The physical phenomenon of Brownian motion was discovered by Robert Brown,  a
19th century scientist who observed through a microscope the random 
swarming motion of pollen grains in water, now understood to be due to 
molecular bombardment.  The theory of Brownian motion was developed by 
Bachelier in his 1900 PhD Thesis \cite{bach1900}, 
and independently by Einstein in his 1905 paper \cite{einstein1905motion}
which used Brownian motion to estimate Avogadro's number and
the size of molecules.  The modern mathematical treatment of Brownian motion 
(abbreviated to BM), also called the {\em Wiener process} is due to 
Wiener in 1923 \cite{wiener23}.
Wiener proved that there exists a version of BM with continuous paths.  
L\'evy made major contributions to the theory
of Brownian paths, especially regarding the structure of their level sets, 
their occupation densities, and other fine features of their oscillations such
as laws of the iterated logarithm.
Note that BM is a Gaussian process, a Markov process, and a martingale. Hence its importance 
in the theory of stochastic process.  It serves as a basic building block 
for many more complicated processes.
For further history of Brownian motion and related processes we cite 
Meyer \cite{MR2002g:60107},
Kahane \cite{kahane97}, 
\cite{MR1640258} 
and Yor \cite{MR2001k:60116}.  

\subsection{Definitions}

This section records the basic definition of a Brownian motion $B$, along with 
some common variations in terminology which we use for some purposes.
The basic definition of $B$, as a random continuous function
with a particular family of finite-dimensional distributions,
is motivated in Section 
\ref{sec:rw} 
by the appearance of this process as a limit in
distribution of rescaled random walk paths.

Let $( \Omega , \mathcal{F} , \PR)$  be a probability space.   A
stochastic process
$( B(t, \omega )  , t  \geq 0 ,\omega \in \Omega )$
is a 
\emph{Brownian motion} if
\begin{enumerate}
\item [(i)]
For fixed each $t$, the random variable $B_t = B( t, \cdot   ) $ has Gaussian distribution with mean $0$ and variance $t$.
\item [(ii)]
The process $B$ has \emph{stationary independent increments}.
\item [(iii)]
For each fixed $ \omega \in \Omega$, the path $t \rightarrow B(t,   \omega$ ) is continuous.
\end{enumerate}
The meaning of (ii) is that if $0 \leq  t_1 < 
t_2 <  ...< t_n$, then $ B_{t_1}  , B_{t_2}  -  B_{t_1} ,...,B_{t_n}  -  B_{t_{n-1}} $ are independent, and the distribution of $B_{t_i} - B_{t_{i-1}}$ 
depends only on $t_i - t_{i-1}$. According to (i), this distribution is
normal with mean $0$ and variance $t_i - t_{i-1}$.
We say simply that $B$ is {\em continuous} to indicate that
$B$ has continuous paths as in (iii).
Because of the convolution properties of normal distributions, the joint distribution
of $ X_{t_1} , ...,X_{t_n} $ are consistent for any $ t_1 < 
...<  t_n $.  By Kolmogorov's consistency theorem 
\cite[Sec. 36]{bill95}, 
given such a consistent family of finite dimensional distributions, there 
exists a process ( $ X_t, t  \geq  0 $ ) satisfying (i) and (ii),
but the existence of such a process with continuous paths is not obvious.
Many proofs of existence of such a process can be found in the literature.
For the derivation from Kolmogorov's criterion for sample path
continuity, see  \cite[\S, Theorem (1.8)]{ry99}.
Freedman \cite{fr83bd} offers a more elementary approach via the 
following steps:
\begin{itemize}
\item
Step 1:  Construct $X_t $ for $ t \in D \ =  \{  \mbox{dyadic   rationals} \} 
=  \{\frac{k} {2^n }\} $.
\item
Step 2:  Show for almost all $ \omega $,  $t  \rightarrow  X( t ,  \omega  )$ is uniformly
continuous on $D \cap  [ 0,  T ]$ for  any finite T.
\item
Step 3:  For such $ \omega $, extend the definition of $X(t, \omega)$ to $t \in [0, \infty)$  by continuity.
\item
Step 4: Check that (i) and (ii) still hold for the process so defined.
\end{itemize}

Except where otherwise specified, a Brownian motion $B$ is assumed
to be one-dimensional, and to start at $B_0 = 0$, as in the above
definition.
If $\beta_t = x + B_t$ for some $x \in \reals$ then $\beta$ is
a {\em Brownian motion started at $x$}.  
Given a Brownian motion $(B_t, t\geq 0)$ starting from 0, the process
$X_t:=x+\delta t+\sigma B_t$ is called a {\em Brownian motion started at 
$x$ with drift parameter $\delta$ and variance parameter $\sigma^2$.}
The notation $\PR^x$ for probability or $\ER^x$ for expectation may be used to 
indicate that $B$ is a Brownian motion started at $x$ rather than $0$, with
$\delta = 0$ and $\sigma^2 = 1$.
A \emph{$d$-dimensional Brownian motion} is a process
$$
( B_t:= (B_t^{(1)}, \ldots, B_t^{(d) }); t \ge 0 )
$$
where the processes $B^{(i)}$ are $d$ independent one-dimensional
Brownian motions. 

\section{BM as a limit of random walks}
\lb{sec:rw}
Let $S_n:= X_1 + \ldots + X_n$ where the $X_i$ are independent random
variables with mean $0$ and variance $1$, and let $S_t$ for real $t$
be defined by linear interpolation between integer values.
Let $B$ be a standard one-dimensional BM.
It follows easily from the  central limit theorem 
\cite[Th. 27.1]{bill95}  that
\eq
\lb{btconv}
(S_{nt}/\sqrt{n}, t \ge 0 ) \convd  (B_t , t \ge 0 ) 
\mbox{ as } n \te \infty
\en
in the sense of weak convergence of finite-dimensional distributions.
According to \emph{Donsker's theorem} \cite{bill99,MR2722836,ry99}, 
this convergence holds also in the path space $C[0,\infty)$ equipped
with the topology of uniform convergence on compact intervals. That is
to say, for every $T >0$, and every functional 
$f$ of a continuous path $x = (x_s, 0 \le s \le T)$  that is bounded and
continuous with respect to the supremum norm on $C[0,T]$,
\eq
\lb{btconv1}
\ER [ f(S_{nt} /\sqrt{n} , 0 \le t \le T ) ] \to  \ER [ f(B_t , 0 \le t \le T )]
\mbox{   as } n \te \infty .
\en
Here, for ease of notation, we suppose that the random walk $X_1, X_2, \ldots$
and the limiting Brownian motion $B$ are defined on the same probability
space $( \Omega , \mathcal{F} , \PR)$, and $\ER$ denotes expectation
with respect to $\PR$.  
One way to do this, which can be
used to prove \re{btconv}, is to use the \emph{Skorokhod embedding} 
technique of constructing the $X_i= B_{T_i} - B_{T_{i-1}}$ for a
suitable increasing sequence of stopping times  $0 \le T_1 \le T_2 \cdots$
such that the ${T_i} - {T_{i-1}}$ are independent copies of $T_1$ with
$\ER (T_1) = \ER(X_1^2)$. 
See \cite[Th. 8.6.1]{MR2722836},  
\cite{bill99} or \cite{ry99} for details, and
\cite{MR2068476} for a survey of variations of this construction.
To illustrate a consequence 
of \re{btconv},
\eq
\lb{btconv2}
\frac{1 }{ \sqrt{n} } \max_{1 \le k \le n } S_{k}  \convd  \sup_{0 \le t \le 1 } B_t \ed | B_t |
\mbox{   as } n \te \infty
\en
where the equality in distribution is due to the well known 
\emph{reflection principle} for Brownian motion.
This can be derived
via Donsker's theorem from the corresponding principle for a simple random 
walk with $X_i = \pm 1$ discussed in \cite{fe1}, or proved directly in
continuous time \cite{bill95}\cite{MR2722836}\cite{fr83bd}\cite{ry99}.
See also \cite{MR0139211}, 
\cite{MR84d:60050} 
for various more refined senses in which Brownian motion may be approximated by random walks.

Many generalizations and variations of Donsker's theorem are known \cite{bill99}.
The assumption of independent and identically distributed $X_i$ can be weakened
in many ways: with suitable auxilliary hypotheses, the $X_i$ 
can be stationary, or independent but not identically distributed, or martingale differences, or otherwise weakly dependent, 
with little affect on the conclusion apart from a scale factor. 
More interesting variations are obtained by suitable conditioning.
For instance, 
assuming that the $X_i$ are integer valued, let
$o(\sqrt{n})$ denote any sequence of possible values of $S_n$ with
$o(\sqrt{n})/\sqrt{n} \te 0$ as $n \te \infty$. 
Then
\cite{MR0166814} 
\eq
\lb{donskbb}
(S_{nt}/\sqrt{n}, 0 \le t \le 1 \giv S_n = o(\sqrt{n}) ) \convd 
(\Bbb_t, 0 \le t \le 1)
\en
where $\Bbb$ is the {\em standard Brownian bridge},
that is, the centered Gaussian process with covariance
$\ER (\Bbb_s \Bbb_t) = s ( 1-t)$ and continuous paths which is obtained by conditioning 
$(B_t, 0 \le t \le 1)$ on $B_1 = 0$.
Some well known descriptions of the distribution
of $\Bbb$ are \cite[Ch. III, Ex (3.10)]{ry99} 
\begin{equation}
\lb{bbdef}
(\Bbb_t , 0 \le t \le 1) \ed  ( B_t - t B_1, 0 \le t \le 1)
\ed ((1 - t ) B_{t/(1-t)}, 0 \le t \le 1)
\end{equation}
where $\ed$ denotes equality of distributions on the path space $C[0,1]$,
and the rightmost process is defined to be $0$ for $t = 1$.
See Section \ref{sec:brmeex} for further discussion of this process.  
Let $T_{-}:= \inf \{n : S_n < 0\}$. Then 
as $n \te \infty$
\eq
\lb{donskme}
(S_{nt}/\sqrt{n}, 0 \le t \le 1 \giv T_{-} > n ) \convd 
(\Bme_t, 0 \le t \le 1)
\en
where $\Bme$ is the {\em standard Brownian meander} \cite{igl74,bol76}, and
as $n \te \infty$ through possible values of $T_{-}$
\eq
\lb{donskex}
(S_{nt}/\sqrt{n}, 0 \le t \le 1 \giv T_{-} = n ) \convd 
(\Bex_t, 0 \le t \le 1)
\en
where $\Bex_t$ is the {\em standard Brownian excursion\index{Brownian excursion}} \cite{kaigh76,csm81}.
Informally, 
$$
\begin{array}{lll}
\Bme& \ed & ( B \giv B_t > 0 \mbox{ for all } 0 < t < 1 )\\
\Bex& \ed & ( B \giv B_t > 0 \mbox{ for all } 0 < t < 1 , B_1 = 0)
\end{array}
$$
where $\ed$ denotes equality in distribution.
These definitions of conditioned Brownian motions
have been made rigorous in a number of ways: for instance by the method of Doob 
$h$-transforms \cite{kni81,sal84,fpy92}, 
and as weak limits as $\eps \downarrow 0$ 
of the distribution of $B$ given suitable events
$A_\eps$, as in \cite{dim77,blu83}, for instance 
\eq
\lb{conme}
(B \giv \underl{B}(0,1) > - \eps ) \convd  \Bme \mbox{ as } \eps \downarrow 0
\en
\eq
\lb{conex}
(\Bbb \giv \underl{\Bbb}(0,1) > - \eps ) \convd  \Bex \mbox{ as } \eps \downarrow 0
\en
where 
$\underl{X}(s,t)$ denotes the infimum of a process $X$ over the interval 
$(s,t)$.
See Section \ref{sec:brmeex} for further treatment of Brownian bridges, excursions and meanders.

The standard Brownian bridge arises also as a weak limit of \emph{empirical processes}:
for $U_1, U_2, \ldots$ a sequence of independent uniform $[0,1]$ variables, and
$$
H_n(t):=\sqrt{n}\:(\frac{1}{n}\sum_{i=1}^{n}1 (U_i\leq t)-t)
$$
so that 
$$\ER [ H_n(t) ] =0, \;\; \; \ER [ (H_n(t))^2  ] =t(1-t)$$
it is found that
$$
(H_n(t), 0 \le t \le 1) \convd (\Bbb_t, 0 \le t \le 1)
$$
in the sense of convergence of finite-dimensional distributions, and also
in the sense of weak convergence in the function space $D[0,1]$ of right-continuous
paths with left limits, equipped with the Skorokhod topology.
See \cite{sw86} for the proof and applications to empirical process theory.

Some further references related to random walk approximations
are Lawler \cite{lawler91i},
Spitzer \cite{MR52:9383}, 
Ethier and Kurtz \cite{MR88a:60130},   
Le Gall \cite{MR89g:60219}.

\section{BM as a Gaussian process}

A {\em Gaussian process} with index set $I$ is a collection of random variables
$(X_t, t \in I)$, defined on a common probability space
$(\Omega, \FF, \PR)$, such that every finite linear combination of these
variables $\sum_i a_i X_{t_i}$ has a Gaussian distribution.
The finite-dimensional distributions of a Gaussian process 
$(X_t, t\in I)$ are uniquely determined by the mean function 
$t\rightarrow \ER (X_t)$, which can be arbitrary, and the covariance function 
$(s,t)\rightarrow \ER ( X_s X_t) - \ER(X_s) \ER(X_t)$,
which must be symmetric and non-negative definite.
A Gaussian process is called {\em centered} if $\ER(X_t) \equiv 0 $.
Immediately from the definition of Brownian motion, there is 
the following characterization:
a real valued process $(B_t, t\geq 0)$ is a Brownian motion
starting from 0 
iff
\begin{itemize}
\item[(a)] $(B_t)$ is a centered Gaussian process with covariance function
\eq
\lb{bcov}
\ER[B_sB_t] =s\wedge t \mbox{ for all } s,t \geq 0;
\en
\item[(b)] with probability one, $t\rightarrow B_t$ is continuous.
\end{itemize}
Note that for a centered process $B$,
formula \re{bcov} is equivalent to
\eq
\lb{bsq}
B_0 = 0 \mbox{ and } \ER [ ( B_t - B_s )^2 ] = | t - s|  .
\en
The existence of Brownian motion can be deduced from Kolmogorov's general
criterion 
\cite[Theorem (25.2)]{rw1} 
for existence of a continuous version of a stochastic process.
Specialized to a centered Gaussian process $(X_t, t \in \reals^n)$, 
this shows that a sufficient condition for existence of a continuous version is that
$\ER( X_s X_t)$ should be locally H\"older continuous 
\cite[Corollary (25.6)]{rw1}.

\subsection{Elementary transformations}
This characterization of BM as a Gaussian process is often useful in checking 
that a process is a Brownian motion, as in the 
the following transformations of
a Brownian motion $(B_t, t\geq 0)$ starting from 0.

\begin{description}
\item[Brownian scaling]
For each fixed $c>0$, 
\eq\lb{bscale}
(c^{-1/2}B_{ct}, t\geq 0) 
\ed (B_t, t \ge 0) .
\en
\item[Time shift]
For each fixed $T>0$, 
\eq\lb{shift}
(B_{T+t}-B_{T}, t\geq 0) \ed (B_T, t \ge 0) .
\en
and the shifted process $(B_{T+ t}-B_{t}, t\geq 0)$ is independent of $(B_{u}, 0\leq u\leq T)$.
This is a form of the \emph{Markov property} of Brownian motion, discussed
further in the next section. 

\item[Time reversal] for each fixed $T >0$
$$
(B_{T-t}-B_T,  0\leq t\leq T)\ed (B_t, 0\leq t\leq T).
$$

\item [Time inversion]
\eq
\lb{tinv}
( tB_{1/t}, t > 0 ) \ed (B_t, t > 0 ) .
\en
\end{description}

\subsection{Quadratic variation}
Consider a subdivision of $[0,t]$ say
$$
0 = t_{n,0} < t_{n,1} < \cdots < t_{n,k_n} = t
$$
with mesh
$$
\max_{i} ( t_{n,i+1} - t_{n,i} ) \te 0 \mbox{ as } n \te \infty .
$$
Then
\eq
\lb{qvar}
\sum_i ( B_{t_{n,i+1} } - B_{t_{n,i}} )^2  \to t \mbox{ in }  L^2
\en
with convergence almost surely if the partitions are nested 
\cite[Th. 13.9]{kall02f}.
An immediate consequence is that the Brownian path 
has unbounded variation on every interval almost surely.
This means that stochastic integrals such as $\int_0^\infty f(t) dB_t $ cannot 
be defined in a naive way as Lebesgue-Stieltjes integrals. 

\subsection{Paley-Wiener integrals}
It follows immediately from \re{bsq} that
\eq
\lb{bsq1}
\ER \left[  \left( \sum_{i = 1}^ n a_i ( B_{t_{i + 1} } - B_{t_i} ) \right)^2 \right]
= \sum_{i = 1}^n a_i^2 (t_{i +1} - t_i )
\en
and hence that the \emph{Gaussian space} generated by $B$ is precisely the
collection of Paley-Wiener integrals
\eq
\lb{bsq2}
\left\{ \int_0^\infty f(t) dB_t   \mbox{ with } f \in L^2(\reals_+, dt ) \right\}
\en
where the stochastic integral is by definition
$\sum_{i = 1}^ n a_i ( B_{t_{i + 1} } - B_{t_i} )$ for
$f(t) = \sum_{i = 1}^ n a_i 1(t_{i} < t \le {t_{i+1}} )$,
and the definition is extended to $f \in L^2(\reals_+, dt ) $ by linearity
and isometry, along with the general formula
\eq
\lb{bsq3}
\ER \left[ \left( \int_0^\infty f(t) dB_t \right) ^2 \right] = \int_0^\infty f^2(t) dt.
\en
See \cite[\S 5.1.1]{MR1980149}
for background and further discussion.
The identity \re{bsq3} gives meaning to the intuitive idea of $dB_t$
as \emph{white noise}, whose intensity is the Lebesgue measure $dt$. 
Then $B_t$ may be understood as the total noise in $[0,t]$.
This identity also suggests the well known construction of BM from a
sequence of independent standard Gaussian variables $(G_n)$ as
\eq
\lb{gseq}
B_t = \sum_{n = 0}^\infty ( f_n, 1_{[0,t]} ) G_n
\en
where $(f_n)$ is any orthonormal basis of $L^2(\reals_+, du )$,
and
$$
( f_n, 1_{[0,t]} )  = \int_0^t f_n(u) du
$$
is the inner product of $f_n$ and the indicator $1_{[0,t]}$ in  
the $L^2$ space.
Many authors have their preferred basis:
L\'evy \cite{MR0044774}, 
\cite{MR1188411}, 
Knight \cite{MR82m:60098}, 
Ciesielski \cite{MR1215769}. 

Note also that once one BM $B$ has been defined on some probability
space, e.g. via \re{gseq}, then many other Brownian motions $B^{(k)}$
can be defined on the same probability space via
$$
B_t^{(k)} = \int_0^\infty k(t,u) d B_u
$$
for a bounded kernel $k$ such that
$\int_0^\infty k^2(t,u) du < \infty $ and
$$
\int_0^\infty ( k(t,u) - k(s,u) )^2 du = (t-s)  ~~~~\mbox{ for } 0 < s < t .
$$
Such a setup is called a non-canonical representation of Brownian motion.
Many such representations were studied by \Lev; See also
Hida and Hitsuda 
\cite{MR1216518}, 
Jeulin and Yor \cite{MR1232001}. 

\subsection{Brownian bridges}
\label{sec:bridges}
It is useful to define, as explicitly as possible,
a family of \emph{Brownian bridges} 
$\{(B^{x,y,T}_{u}, 0 \le u \le T), x,y \in \reals \}$ distributed
like a Brownian motion $(B_u, 0 \le u \le T)$ conditioned on $B_0 = x$ and $B_T = y$.
To see how to do this, assume first that $x = 0$, and write
$$
B_u = \left( B_u - \frac{u }{T} B_T \right) +  \frac{u }{ T } B_T
$$
Observe that each of the random variables $B_u - (u /T) B_T$ is orthogonal
to $B_T$, and hence the process $(B_u - (u /T) B_T, 0 \le u \le T)$ is independent of $B_T$.
It follows that the desired family of bridges can be constructed from
an unconditioned Brownian Motion $B$ with $B_0 = 0$ as
$$
B^{x,y,T}_{u} = x + B_u - (u/T) B_T + (u/T) (y-x)  = x + u ( y-x) + \sqrt{T}\Bbb(u/T)
$$
for $0 \le u \le T, x,y \in \reals$,
where $\Bbb$ is the \emph{standard Brownian bridge} as in \eqref{donskbb}.

\subsection{Fine structure of Brownian paths}

Regarding finer properties of Brownian paths, such as \Lev's modulus of continuity,
Kolmogorov's test, laws of the iterated lograrithm, upper and lower functions, Hausdorff measure of various
exceptional sets, see It\^o and McKean \cite{MR33:8031} for an early account, and M\"orters and Peres \cite{MR2604525} for a more recent exposition.

\subsection{Generalizations}
We mention briefly in this section a number of Gaussian processes which
generalize Brownian motion by some extension of either the covariance function or the index set.


\subsubsection{Fractional BM}
A natural generalization of Brownian motion is defined by
a centered Gaussian process with \re{bsq} generalized to
$$
\ER \left[ ( B_t^{(H)} - B_s^{(H)} ) ^2 \right] = | t -s |^{2H}
$$
where $H$ is called the \emph{Hurst parameter}.
This construction is possible only for $H \in (0,1]$,
when such a \emph{fractional Brownian motion} 
$(B_t^{(H)}, t \ge 0 )$ can be constructed from a standard Brownian
motion $B$ as
$$
B_t^{(H)} = C_\alpha \int_{0}^\infty (  u ^{\alpha} - (u - t )_+^{\alpha} ) d B_u
$$
for $\alpha = H - 1/2$ and some universal constant $C_\alpha$.
Early work on fractional Brownian motion was done by
Kolmogorov \cite{MR0003441},
Hurst \cite{hurst1951long}, 
Mandelbrot and Van Ness \cite{MR0242239}. 
See also \cite{MR3075342} 
for a historical review.
It is known that fractional BM is not a semimartingale except for $H = 1/2$ or $H = 1$.
See \cite{MR2052267} for an introduction to white-noise theory and 
Malliavin calculus for fractional BM. Other recent texts on fractional BM are
\cite{MR3076266} 
\cite{MR2378138} 
\cite{MR2387368}. 
See also the survey article \cite{MR3466837} on 
fractional Gaussian fields.

\subsubsection{L\'evy's BM}
This is the centered Gaussian process $Y$ indexed by $\reals^\delta$ such that
$$
Y_0 = 0  \mbox{ and } \ER ( Y_x - Y_y )^2 = |x-y| .
$$
McKean 
\cite{mckean1963brownian} 
established the remarkable fact that this process has a spatial Markov property in odd dimensions, but not in even dimensions.
See also
\cite{chentsov57}, 
\cite{MR0391282}, 
\cite{MR0394835}. 

\subsubsection{Brownian sheets}
\newcommand{\deltaN}{N}
Instead of a white noise governed by Lebesgue measure on the line, consider 
a white noise in the positive orthant of $\reals^\deltaN$ for some $\deltaN = 1,2, \ldots$,
with intensity Lebesgue measure. Then for $t_1, \ldots, t _\deltaN \ge 0$ the
noise in the box $[0,t_1] \times \cdots \times [0,t_\deltaN]$ is a centered Gaussian variable
with variance $\prod_{i = 1}^\deltaN t_i$, say $X_{t_1, \ldots, t_d}$, and
the covariance function is 
$$
\ER ( X_{s_1, \ldots, s_ \deltaN} X_{t_1, \ldots, t_ \deltaN} ) = \prod_{i = 1}^\deltaN (s_i \wedge t_i )
$$
This $N$ parameter process is known as the
\emph{standard Brownian sheet}.
See Khosnevisan \cite{MR2004a:60003} for a general treatment of multiparameter processes,
and Walsh \cite{MR876085} for an introduction to the related area of stochastic partial differential equations.

\subsection{References}

\paragraph{Gaussian processes : general texts}
\begbib
\item[\cite{MR82h:60103}]R. ~J. Adler. \newblock {\em The geometry of random fields} (1981). 
 \newblock John Wiley \& Sons Ltd., Chichester, 1981. \newblock Wiley Series in Probability and Mathematical Statistics.   

\item[\cite{MR92g:60053}]R. ~J. Adler. \newblock {\em An introduction to continuity, extrema, and related topics for   general {G}aussian processes} (1990).

\item[\cite{MR56:6829}]H.~Dym and H.~P. McKean. \newblock {\em Gaussian processes, function theory, and the inverse spectral   problem} (1976).

\item[\cite{MR99f:60078}]X. Fernique. \newblock {\em Fonctions al\'eatoires gaussiennes, vecteurs al\'eatoires   gaussiens} (1997).

\item[\cite{MR1216518}]
T. Hida and M. Hitsuda. \newblock {\em Gaussian processes} (1993).

\item[\cite{MR22:10012}]T. Hida. \newblock {\em Canonical representations of {G}aussian processes and their   applications} (1960).

\item[\cite{MR39:4935}]M. Hitsuda. \newblock {\em Representation of {G}aussian processes equivalent to {W}iener   process } (1968). 

\item[\cite{MR99f:60082}]S. Janson. \newblock {\em Gaussian {H}ilbert spaces} (1997).

\item[\cite{MR98k:60059}]M.~A. Lifshits. \newblock {\em Gaussian random functions} (1995). 

\item[\cite{MR0272042}] \newblock {\em Processus al\'eatoires gaussiens} (1968).

\item[\cite{MR32:8408}]L.~A. Shepp. \newblock {\em Radon-{N}ikod\'ym derivatives of {G}aussian measures } (1966).
\endbib

\paragraph{Gaussian free fields}

\begbib
\item[\cite{MR2322706}] S. Sheffield {\em Gaussian free fields for mathematicians} (2007).
\item[\cite{MR3052311}] N.-G. Kang and N. K. Makarov. {\em Gaussian free field and conformal field theory} (2013).
\endbib

\section{BM as a Markov process}

\subsection{Markov processes and their semigroups}
A \emph{Markov process} $X$ with measurable state space $(E,\EE)$ 
is an assembly of mathematical objects
$$
X = ( \Omega, \FF, (\FF_t)_{t \ge 0} , (X_t) _{t \ge 0 }, (P_t)_{t \ge 0 }, \{ \PR^x \}_{x \in E} )
$$
where
\begin{itemize}
\item  $( \Omega, \FF )$ is a measurable space, often a \emph{canonical space}
of paths in $E$ subject to appropriate regularity conditions;
\item   $(\FF_t)$ is a filtration;
\item   $X_t: \Omega \to E$ is an $\FF_t/\EE$ measurable random variable, regarded as representing the state of the process at time $t$;
\item   $P_t: E \times \EE \to [0,1]$ is a \emph{transition probability kernel}
on $(E, \EE)$, meaning that $A \to P_t(x, A)$ is a probability distribution on
$(E, \EE)$, and for each fixed $x \in E$,  and $A \to P_t(x, A)$ is $\EE$ measurable;
\item the $P_t$ satisfy the \emph{Chapman-Kolmogorov equation}
\eq
\lb{ck}
P_{s+t}(x,A) = \int_E P_s(x,dy) P_t(y,A) ~~~~~~~~~~(x \in E, A \in \EE )
\en
\item under the probability law $\PR^x$ on $( \Omega, \FF )$, the process
$(X_t)$ is Markovian relative to $(\FF_t)$ with transition probability kernels
$(P_t)$, meaning that
\eq
\lb{ck}
\ER^x [ f( X_{s+t} ) \giv \FF_s ] = (P_t f ) (X_s)   ~~~~~\PR^x \mbox{ a.s. }
\en
where the left side is the $\PR^x$ conditional expectation of 
$f( X_{s+t} )$  given $\FF_s$,
with $f$ an arbitrary bounded or non-negative
$\EE$ measurable  function, and on the right side $P_t$ is regarded as
an operator on such $f$ according to the formula
\eq
\lb{ptf}
(P_t f )(x)  = \int_E P_t(x,dy) f(y) .
\en
\end{itemize}
All Markov processes discussed here will be such that $P_0(x,A) = 1(x \in A)$,
so $\PR^x(X_0 = x) = 1$, 
though this condition is relaxed in the theory of Ray processes \cite{rw1}.
See \cite{rw1,ry99,ms88} for background and treatment of Markov processes at
various levels of generality. 

The conditioning formula \re{ck} implies the following description of
the finite-dimensional distributions of $X$: for $0 \le t_1  < t_2 \cdots < t_n:$
$$
\PR^x ( X_{t_i} \in dx_i , 1 \le i \le n )
= P_{t_1}(x,dx_1) P_{t_2 - t_1}(x_1, dx_2) \cdots P_{t_{n} - t_{n-1}}(x_{n-1} , dx_n )
$$
meaning that for all bounded or non-negative product measurable functions  $f$
$$
\ER^x [ f(X_{t_1}, \ldots, X_{t_n}) ]
= \int_E \cdots \int_E f(x_1, \ldots, x_n) P_{t_1}(x,dx_1) \cdots P_{t_{n} - t_{n-1}}(x_{n-1} , dx_n )
$$ 
where the integration is done first with respect to $x_n$ for fixed $x_1, \ldots, x_{n-1}$, then
with respect to $x_{n-1}$, for fixed $x_1, \ldots, x_{n-2}$, and so on.
Commonly, the transition kernels $P_t$ are specified by 
\emph{transition probability densities} $p_t(x,y)$ relative to some reference measure
$m(dy)$ on  $(E, \EE)$, meaning that $P_t(x,dy) = p_t(x,y) m(dy)$. In terms of such
densities, the Chapman-Kolmogorov equation becomes
\eq
\lb{ckd}
p_{s+t}(x,z) = \int_E m(dy) p_s(x,y) p_t(y,z)
\en
which in a regular situation is satisfied for all $s,t \ge 0$
and all $x,z \in E$. In terms of the operators $P_t$  defined on bounded or non-negative
measurable functions on $(E, \EE)$ via \re{ptf}, the Chapman-Kolmogorov equations correspond to
the \emph{semigroup property}
\eq
\lb{semig}
P_{s+t} = P_s\circ P_t\ \ \ \ \ \ \ \ \ s,t\geq 0 .
\en
The general analytic theory of semigroups, in particular the
Hille-Yosida theorem \cite[Theorem III (5.1)]{rw1} is therefore available as
a tool to study Markov processes.

Let $\PR^0$ be Wiener measure i.e. the
distribution on $\Omega:= C[0,\infty)$ of a standard Brownian motion 
starting from $0$, where $\Omega$ is equipped with the sigma-field
$\FF$ generated by the coordinate process $(X_t, t \ge 0)$ defined by
$X_t(\omega) = \omega(t)$.
Then standard Brownian is realized under $\PR^0$ as 
$B_t = X_t$.
Let $\mathbb{P}^x$ be the $\PR^0$ distribution of $(x+B_t, t\geq 0)$ on $\Omega$.
Then a Markov process with state space $E = \reals$,
and $\EE$ the Borel sigma-field,
is obtained from this canonical setup with the
\emph{Brownian transition probability density function}
relative to Lebesgue measure on $\reals$
\eq
\lb{btf}
p_t(x,y) = \frac{1}{\sqrt{2\pi t}}e^{-\frac{(y-x)^2}{2t}}
\en
which is read from the Gaussian density of Brownian increments.
The Chapman-Kolmogorov identity \re{ckd} then reflects how the
sum of independent Gaussian increments is Gaussian.

This construction of Brownian motion as a Markov process 
generalizes straightforwardly to
$\reals^d$ instead of $\reals$, allowing a Gaussian distribution $\mu_t$
with mean $b  t $ and covariance matrix $\sigma \sigma^{T}$ for some $d \times d$
matrix $\sigma$ with transpose $\sigma^T$. The semigroup is then specified by
\eq
\lb{shom}
P_t f (x) = \int_{\reals^d} f(x + y ) \mu_t(dy)
\en
for all bounded or non-negative Borel measurable functions $f: \reals^d \to \reals$.
The corresponding Markovian laws $\PR^x$ on the space of continuous paths in
$\reals^d$ can be defined by letting $\PR^x$ be the law of the process
$(x + \sigma B_t + b t, t \ge 0)$
where $B$ is a \emph{standard Brownian motion in $\reals^d$}, that is a process whose coordinates are $d$ independent standard one-dimensional Brownian motions.
According to a famous result of \Lev\ \cite[Theorem (28.12)]{rw1}, this construction
yields the most general Markov process $X$ with state space $\reals^d$, continuous
paths, and transition operators of the spatially homogenous form \re{shom} corresponding
to stationary independent increments.
More general spatially homogeneous Markov processes $X$ with state space $\reals^d$, 
realized with paths which are right continuous with left limits,
are known as \emph{\Lev\ processes}. These correspond via \re{shom} to convolution semigroups of 
probability measures $(\mu_t, t \ge 0 )$ generated by an infinitely divisible distribution $\mu_1$ on
$\reals^d$. See \cite[\S 28]{rw1}, \cite{bert96lev}, \cite{sato99} for treatments of \Lev\ processes.

Another important generalization of Brownian motion is obtained by considering Markov processes
where the spatial homogeneity assumption \re{shom} is relaxed, but continuity of paths is retained.
Such Markov processes, subject to some further regularity conditions which vary from one authority to 
another, are called \emph{diffusion processes}. The state space can now be $\reals^d$, or a suitable subset of $\reals^d$, 
or a manifold.  The notion of infinitesimal generator of a Markovian semigroup, discussed further 
in Section \ref{infgen} is essential for the development of this theory.

\subsection{The strong Markov property}

If $B$ is an $(\FF_t)$ Brownian motion then for each fixed time $T$ the process
\eq
\lb{future}
(B_{T+s} - B_T, s \ge 0 )
\en
is a Brownian motion independent of $\FF_T$. According to the \emph{strong Markov property}
of Brownian motion, this is true also for all $(\FF_t)$ \emph{stopping times}
$T$, that is random times $T: \Omega\rightarrow \reals_+ \cup\{\infty\}$ 
such that $(T\leq t) \in \mathcal{F}_{t}$ for all $t \ge 0$. 
The sigma-field $\FF_{T}$ of events determined by time $T$ is defined as: 
$$\displaystyle \mathcal{F}_{T}:=\{A\in \FF: A \cap (T\leq t) \in \mathcal{F}_{t}\} $$
and the process \re{future} is considered only conditionally on the event $T < \infty$.
See \cite{MR2722836}, \cite{bill95} for the proof and numerous applications.
More generally, a Markov process $X$ with filtration $(\FF_t)$ 
is said to have the strong Markov property 
if for all $(\FF_t)$ stopping times
$T$, conditionally given $\FF_T$ on $(T < \infty)$ with $X_T = x$ the process  $(X_{T+s}, s \ge 0 )$ is
distributed like $(X_s, s \ge 0 )$ given $X_0 = x$.
It is known \cite[Th. III (9.4)]{rw1} that the strong Markov property holds for \Lev\ processes, and more generally
for the class of \emph{\FD\ processes} $X$ defined by the following regularity conditions:
the state space $E$ is
locally compact with countable base, $\EE$ is the Borel sigma field on $E$, and
the transition probability operators $P_t$
act in a strongly  continuous way on the Banach space $C_0(E)$ of bounded continuous functions on $E$ 
which vanish at infinity 
\cite[Ch. 19]{kall02f}, 
\cite[III.2]{ry99},
\cite[Def. (6.5)]{rw1}.

%

\subsection{Generators}
\lb{infgen}

The \emph{generator} $\gen$ of a Markovian
semigroup $(P_t)_{t \ge 0}$ is the operator defined as
\eq
\lb{gendef}
\gen f:= \lim_{t\downarrow 0}\frac{P_tf-f}{t}
\en
for suitable real-valued functions $f$, meaning that the limit exists in some sense,
e.g. the sense of convergence of functions in a suitable Banach
space, such as the space $C_0(E)$ involved in the definition of
Feller-Dynkin processes.
Then $f$ is said to belong to the \emph{domain of the generator}.
From \re{gendef} it follows that for $f$ in the domain of the generator
\eq
\lb{kol1}
\frac{ d }{ dt  } P_t f = \GG  P_t f = P_t \GG f
\en
and hence
\eq
\lb{ptf1}
P_t f -f  = \int_0^t \GG P_s f ds = \int_0^t P_s \GG f ds 
\en
in the sense of strong differentiation and Riemann integration in 
a Banach space.
In particular, if $(P_t)$ is the semigroup of Brownian motion, then it
is easily verified 
\cite[p. 6]{rw1}
that for $f \in C_0(\reals)$ with two bounded continuous derivatives,
the generator of the standard Brownian semigroup is given by
\eq
\lb{bgen}
\gen f = \hf f^{\prime \prime} = \frac{1 }{ 2 }  \frac {d^2 f }{ dx ^2 }
\en
In this case, the first equality in \re{kol1} reduces to \emph{Kolmogorov's backward equation}
for the Brownian transition density:
$$
{ \partial \over \partial t } p_t(x,y) = 
{1 \over 2}
{\partial ^2 \over \partial x^2 } p_t (x,y) .
$$
Similarly, the second equality in \re{kol1} yields \emph{Kolmogorov's forward equation}
for the Brownian transition density:
$$
{ \partial \over \partial t } p_t(x,y) = 
{1 \over 2}
{\partial ^2 \over \partial y^2 } p_t (x,y) . 
$$
This 
partial differential equation is also known as the \emph{heat equation},
due to its physical interpretation in terms of heat flow  \cite{MR0079376}. 
Thus 
for each fixed $x$ the Brownian transition density function $p_t (x,y)$
is identified as the fundamental solution of the heat equation  with a pole at $x$
as $t \downarrow 0$.
If we consider instead of standard Brownian motion $B$ a Brownian motion with
drift $b$ and diffusion coefficient $\sigma$, we find instead of \re{bgen} that
the generator acts on smooth functions of $x$ as
\eq
\lb{bgen1}
\gen = b { d \over d x } +  \hf \sigma^2 {d^2 \over dx ^2 }  .
\en
This suggests that given some space-dependent
drift and variance coefficients $b(x)$ and $\sigma^2(x)$ subject to
suitable regularity conditions, a Markov process which behaves when started 
near $x$ like a Brownian motion with drift $b(x)$ and variance $\sigma(x)$
should have as its generator the second order differential operator
\eq
\lb{bgen2}
\gen = b(x) { d \over d x } +  \hf \sigma^2 (x) {d^2 \over dx ^2 } 
\en
Kolmogorov 
\cite{MR1512678} 
showed that the semigroups of such Markov processes 
could be constructed by establishing the existence of suitable solutions
of the Fokker-Planck-Kolmogorov equations determined by this generator.
More recent approaches to the existence and uniqueness of such diffusion processes
involve martingales in an essential way, as we discuss in the next section.

\subsection{Transformations}
The theory of Markov processes provides a number of ways of starting
from one Markov process $X$ and transforming it into another Markov process
$Y$ by some operation on the paths or law of $X$. 
The semigroup of $Y$ can then typically be derived quite 
simply and explicitly from the semigroup of $X$. Such operations include
suitable transformations of the state-space, time-changes, and killing.
Starting from $X = B$ a Brownian motion in $\reals$ or $\reals^d$, these
operations yield a rich collection of Markov processes whose properties
encode some features of the underlying Brownian motion.

\subsubsection{Space transformations}
The simplest means of transformation of a Markov process $X$ with state
space $(E,\EE)$ is to consider the distribution of the process 
$(\Phi(X_t),t \ge 0 )$  for a suitable measurable $\Phi:E \to E ^\prime$ for 
some $(E^\prime, \EE^\prime)$.
Typically, such a transformation destroys the Markov property, unless
$\Phi$ respects some symmetry in the dynamics of $X$, as follows 
\cite[I (14.1)]{rw1}. 
Suppose that $\Phi$ maps $E$ onto $E'$, and that for each 
$x \in \EE$ the $P_t(x, \cdot)$ distribution of $\Phi$ depends only on $\Phi(x)$, so that
\eq
\lb{qdef}
\PR^x [\Phi(X_t) \in A^\prime ] = Q _t( \Phi(x), A ^\prime)     ~~~~(x \in E, A^\prime \in \EE^\prime )
\en
for some family of Markov kernels $(Q_t, t \ge 0)$  
on $(E^\prime, \EE^\prime)$.
Assuming for simplicity that $X$ has continuous paths, 
that $\Phi$ is continuous,
and that $\PR^x$ governs $X$ with semigroup $(P_t)$ with $X_0 = x$. 
Then the $\PR^x$ distribution of $(\Phi(X_t), t \ge 0 )$ is that of a
Markov process with semigroup $(Q_t)$ and initial state $\Phi(x)$. 
Refer to Dynkin \cite{dyn65}, 
Rogers-Williams \cite{rw1}, 
Rogers and Pitman \cite{rp81}
Glove and Mitro \cite{gl90}.
Let $\QR^y$ for $y \in \EE'$ denote the
common distribution of this process on $C([0,\infty), E^\prime)$ for all
$ x $ with $\Phi(x) = y$. Then $(\QR^y, y \in E')$ defines the collection
of laws on $C([0,\infty), E^\prime)$ of the canonical Markov process
with semigroup $(Q_t, t \ge 0 )$. Following is a well known example.

\subsubsection{Bessel processes}
\label{sec:bes}
Let $B_t = (B_t^{(i)} , 1 \le i \le \delta )$ be a $\delta$-dimensional Brownian motion for some
fixed positive integer $\delta$. Let
\begin{equation}
\lb{normbm}
R_t ^{(\delta)}: = |B_t| = \sqrt{  \sum_{i = 1}^\delta (B_t^{(i)} )^2 }
\end{equation}
be the radial part of $(B_t)$.
If $(B_t)$ is started at $x \in \reals^\delta$, and $O_{x,y}$ is an orthogonal linear transformation of $\reals^\delta$ which maps $x$ to $y$, with
$|O_{x,y}(z)| = |z|$ for all $z \in \reals^\delta$, then $(O_{x,y}(B_t))$ is 
a Brownian motion starting at $y$ whose radial part is pathwise identical 
to the radial part of $(B_t)$. It follows immediately from this observation
that $(R_t ^{(\delta)}, t \ge 0 )$ given $B_0$ with $|B_0| = r$
defines a Markov process, the \emph{$\delta$-dimensional Bessel process},
with initial state $r$ and transition semigroup $(Q_t)$ on $[0,\infty)$
which can be defined via \re{qdef}
by integration of the Brownian transition density function
over spheres in $\reals ^\delta$. That
gives an explicit formula for the transition density of the
Bessel semigroup in terms of Bessel functions \cite[p. 446]{ry99}. 
Since the generator of $B$ acting on
smooth functions is the half of the $\delta$-dimensional Laplacian
$$
{1 \over 2 } \sum_{i = 1}^\delta { d^2 \over dx_i ^2 }
$$
it is clear that the generator of the $\delta$-dimensional Bessel process,
acting on smooth functions with domain $[0,\infty)$ which vanish in 
a neighbourhood of $0$,
must be half of the radial part of the Laplacian, that is 
\eq
\lb{besd}
{\delta - 1  \over 2 r }   {d \over dr } + {1 \over 2 } {d^2 \over dr^2 }
\en
In dimensions $\delta \ge 2$, it is known \cite[Ch. XI]{ry99} that this action of the
generator uniquely determines the Bessel semigroup $(Q_t)$, 
essentially because when started away from $0$ the Bessel process never
reaches $0$ in finite time, though in two dimensions it approaches
$0$ arbitrarily closely at large times. 

In dimension one, the process $(|B_t|, t \ge 0 )$ is called 
\emph{reflecting Brownian motion}. It is obvious, and
consistent with the vanishing drift term in formula \re{besd} for 
$\delta = 1$, that the reflecting Brownian motion started at $x >0$ is 
indistinguishable from ordinary Brownian motion up until the random 
time $T_0$ that the path first hits $0$.  In dimension one, the expression
\re{besd} for the infinitesimal generator must be supplemented by
a \emph{boundary condition} to distinguish the reflecting motion from
various other motions with the same dynamics on $(0,\infty)$, but
different behaviour once they hit $0$. See Harrison \cite{MR798279} 
for further treatment of reflecting Brownian motion and its applications to stochastic flow systems.
See also R. Williams \cite{MR99m:60129} regarding semimartingale reflecting Brownian motions in an orthant,
and Harrison \cite{MR1994047} for a broader view of Brownian networks.

The theory of Bessel processes is often simplified by consideration of the
{\em squared Bessel process of dimension $\delta$} which for $\delta = 1,2, \ldots$
is simply the square of the norm of a $\delta$-dimensional Brownian motion:
\begin{equation}
\lb{normbmsq}
X_t^{(\delta)} := (R_t ^{(\delta)})^2: = |B_t|^2 = \sum_{i = 1}^\delta (B_t^{(i)} )^2 .
\end{equation}
This family of processes enjoys the key {\em additivity property} that if $X^{(\delta)}$ and $X^{(\delta')}$ are two
independent squared Bessel processes of dimensions $\delta$ and $\delta'$, started at values $x,x' \ge 0$, then
$X^{(\delta)} + X^{(\delta')}$ is  a squared processes of dimension $\delta + \delta'$, started at $x + x'$.
As shown by Shiga and Watanabe \cite{sw73}, this property can be used to extend the definition of the squared Bessel process to arbitrary non-negative
real values of the parameter $\delta$. The resulting process is then a Markovian diffusion on $[0,\infty)$ with generator
acting on smooth functions of $x > 0 $ according to
$$
\delta  \frac{d}{dx} + 4 x \frac{1}{2} \frac{d^2}{ dx ^2 } 
$$
meaning that the process when at level $x$ behaves like a Brownian motion with drift $\delta$ and variance parameter $4x$.
See \cite{ry99} and
\cite[\S 3.2]{MR3616274} 
for further details.
For $\delta = 0$ this is the {\em Feller diffusion} \cite{fel51}, 
which is the continuous state branching process obtained as a scaling limit of critical Galton-Watson branching processes in discrete time.
Similarly, the squared Bessel process of dimension $\delta \ge 0$ may be interpreted as a continuous state branching process immigration rate $\delta$.
See \cite{kw71}, \cite{lam67br}, \cite{lam67berk}, \cite{lind72}. 
See also \cite{MR1997032} 
for a survey and some generalizations of {B}essel processes, including the Cox-Ingersoll-Ross diffusions which are of interest in mathematical finance.

\subsubsection{The Ornstein-Uhlenbeck process}
If $(B(t), t \ge 0)$ is a standard Brownian motion  it is easily shown by Brownian scaling that the process
$(e^{-r} B(e ^{2r}), r \in \reals)$ is a two-sided stationary Gaussian Markov process, known as an {\em Ornstein-Uhlenbeck process}.
See \cite[Ex. III (1.13)]{ry99} for further discussion.


\subsection{L\'evy processes}

Processes with stationary independent increments, now called {\em L\'evy processes}, were introduced by L\'evy at the same time as he derived
the {\em L\'evy - Khintchine formula} which gives a precise representation of the characteristic functions of all infinitely divisible distributions. 
The interpretation of various components of the L\'evy - Khintchine formula shows that Brownian motion with a general drift vector and covariance
matrix is the only kind of L\'evy process with continuous paths. Other L\'evy processes can be constructed to have right continuous paths with left limits,
using a Brownian motion for their continuous path component, and a suitably compensated integral of Poisson processes to create the jumps.
Some of these L\'evy processes, called {\em stable processes}, share with Brownian motion a generalization of the Brownian scaling property.

\paragraph{Two reference books}

\begbib
\item[\cite{MR98e:60117}]Jean Bertoin. \newblock {\em L\'evy processes}, volume 121 of {\em Cambridge Tracts in   Mathematics}. \newblock Cambridge University Press, Cambridge, 1996.   

\item[\cite{sato99}]K.~Sato. \newblock {\em L\'evy processes and infinitely divisible distributions}. \newblock Cambridge University Press, Cambridge, 1999. \newblock Translated from the 1990 Japanese original, revised by the author.   
\endbib

\paragraph{Tutorials}

\begbib
\item[\cite{MR1861722}]Jean Bertoin. \newblock Some elements on {L}\'evy processes. \newblock In {\em Stochastic processes: theory and methods}, volume~19 of {\em   Handbook of Statist.}, pages 117--144. North-Holland, Amsterdam, 2001.   

\item [\cite{MR1833690}] K. Sato. {\em Basic results on {L}\'evy processes} (2001).

\endbib

\paragraph{Subordinators}
\begbib
\item[\cite{MR2002a:60001}]Jean Bertoin. \newblock Subordinators: examples and applications. \newblock In {\em Lectures on probability theory and statistics (Saint-Flour,   1997)}, volume 1717 of {\em Lecture Notes in Math.}, pages 1--91. Springer,   Berlin, 1999.   
\endbib

\paragraph{Applications}
\begbib

\item[\cite{MR2001m:60004}]Ole~E. Barndorff-Nielsen, Thomas Mikosch, and Sid Resnick, editors. \newblock {\em L\'evy processes}. \newblock Birkh\"auser Boston Inc., Boston, MA, 2001. \newblock Theory and applications.   

\item[\cite{MR96k:82057}]Michael~F. Shlesinger, George~M. Zaslavsky, and Uriel Frisch, editors. \newblock {\em L\'evy flights and related topics in physics}, volume 450 of   {\em Lecture Notes in Physics}, Berlin, 1995. Springer-Verlag.   

\item [\cite{MR2512800}] D. Applebaum {\em L\'evy processes and stochastic calculus} (2009).

\item [\cite{MR1882824}] S. I. Boyarchenko and S. Z. Levendorski\u\i {\em Option pricing and hedging under regular {L}\'evy processes of exponential type} (2001).
 
\endbib

\paragraph{Survey papers}
\begbib
\item[\cite{MR52:6886}]N.~H. Bingham. \newblock Fluctuation theory in continuous time. \newblock {\em Advances in Appl. Probability}, 7(4):705--766, 1975.   

\item[\cite{MR2002d:60039}]Ronald Doney. \newblock Fluctuation theory for {L}\'evy processes. \newblock In {\em L\'evy processes}, pages 57--66. Birkh\"auser Boston, Boston,   MA, 2001.   

\item[\cite{MR53:4240}]Bert Fristedt. \newblock Sample functions of stochastic processes with stationary, independent   increments. \newblock In {\em Advances in probability and related topics, Vol. 3}, pages   241--396. Dekker, New York, 1974.   

\item[\cite{MR52:15692}]S.~J. Taylor. \newblock Sample path properties of processes with stationary independent   increments. \newblock In {\em Stochastic analysis (a tribute to the memory of Rollo   Davidson)}, pages 387--414. Wiley, London, 1973.   

\endbib

\paragraph{Generalizations}

\begbib
\item[\cite{MR2003a:47104}]N.~Jacob. \newblock {\em Pseudo differential operators and {M}arkov processes. {V}ol.   {I}}. \newblock Imperial College Press, London, 2001. \newblock Fourier analysis and semigroups.  

\item[\cite{MR2003k:47077}]N.~Jacob. \newblock {\em Pseudo differential operators and {M}arkov processes. {V}ol.   {II}}. \newblock Imperial College Press, London, 2002. \newblock Generators and their potential theory.   

\item[\cite{MR2060091}]Ming Liao. \newblock {\em L\'evy processes in {L}ie groups}, volume 162 of {\em Cambridge   Tracts in Mathematics}. \newblock Cambridge University Press, Cambridge, 2004.  
\endbib

\subsection{References for Markov processes}
\begbib
\item[\cite{MR41:9348}]R.~M. Blumenthal and R.~K. Getoor. \newblock {\em Markov processes and potential theory} (1968).

\item[\cite{MR97g:60105}] K. L. Chung. \newblock {\em Green, {B}rown, and probability} (1995). 

\item[\cite{MR2152573}] K. L. Chung and John Walsh {\em Markov processes, Brownian motion, and time symmetry} (2005).

\item[\cite{dyn65}]E.~B. Dynkin. \newblock {\em Markov Processes, I,II} (1965).   

\item[\cite{MR2574430}]  T. M. Liggett {\em Continuous time {M}arkov processes} (2010).

\item[\cite{MR36:2219}]P.-A. Meyer. \newblock {\em Processus de {M}arkov} (1967).

\item[\cite{MR95d:60131}]M. Nagasawa. \newblock {\em Schr\"odinger equations and diffusion theory} (1993). 

\item[\cite{MR29:6542}]M. Nagasawa. \newblock {\em Time reversions of {M}arkov processes} (1964). 

\item[\cite{MR89m:60169}]M. Sharpe. \newblock {\em General theory of {M}arkov processes} (1989).

\item[\cite{dw80r}] D. Williams. \newblock Review of: {\em Multidimensional diffusion processes} by {D}. {S}troock,   {S}.{R}.{S}. {V}aradhan (1980).

\item[\cite{MR50:11483}] D. Williams. \newblock {\em Brownian motions and diffusions as {M}arkov processes} (1974).

\item[\cite{dw82l}]David Williams. \newblock {\em Lecture on {B}rownian motion given in {L}iverpool} (1982).
\endbib

\section{BM as a martingale}

Let $(\FF_t, t \ge 0)$ be a filtration in a probability space $(\Omega, \FF, \PR)$.
A process $M = (M_t)$ is called an $(\FF_t)$ \emph{martingale} if
\begin{enumerate}
\item
$M_t$ is  $\FF_t$ measurable for each $t \ge 0.$
\item
$\ER(M_t|\mathcal{F}_s)=M_s$ for all $0\leq s\leq t$.
\end{enumerate}
Implicitly here, to make sense of the conditional expectation, it is assumed that 
$\ER |M_t | < \infty$.
It is known that if the filtration $(\mathcal{F}_t)$ is right continuous, i.e. $\mathcal{F}_t^+=\mathcal{F}_t$ up to null sets,
then every martingale has a version which has right continuous paths (even with left limits). See \cite{ry99}.

If $B$ is a standard Brownian motion relative for a filtration $(\FF_t, t \ge 0)$, meaning that $B_{t+s} - B_t$ is independent of $\FF_t$ with 
Gaussian distribution with mean $0$ and variance $s$, for each $s,t \ge 0$, then both $(B_t)$ and $(B_t^2 - t )$ are $(\FF_t)$ martingales.
So too is $$M_t^{(\theta)}=\exp(\theta B_t-\theta^2t/2)$$ for each $\theta$ real or complex, where a process with values in the complex plane, or in
$\reals^d$ for $d \ge 2$, is called a martingale if each of its one-dimensional components is a martingale.

\paragraph{Optional Stopping Theorem} If ($M_t$) is a right
continuous martingale relative to a right continuous filtration
$(\mathcal{F}_t)$, and $T$ is a stopping time for
($\mathcal{F}_t$), meaning $(T \le t ) \in {\mathcal{F}}_t$ for each $t \ge 0$, and $T$ is bounded, i.e., $T\leq C<\infty$ for some constant
$C$,
then $$\ER M_T= \ER M_0.$$
Moreover, if an $(\mathcal{F}_t)$ adapted process $M$ has this property
for all bounded $(\mathcal{F}_t)$ stopping times $T$, then
$M$ is an $(\mathcal{F}_t)$ martingale.
Variations or corollaries with same setup:
If $T$ is a stopping time (no bound now), then
\[\ER (M_{T\wedge t}|\mathcal{F}_s)=M_{T\wedge s}\]
so $(M_{T\wedge t}, t \ge 0 )$ is an
$(\mathcal{F}_t)$-martingale.
If $T$ is a stopping time of Brownian motion $B$ with $\ER(T) <\infty$,
then 
$$
\ER(B_T) =0 \mbox{ and } \ER (B_T^2)=\ER(T).
$$
\subsection{L\'evy's characterization}
\lb{asmg}
Let 
$$
( B_t:= (B_t^{(1)}, \ldots, B_t^{(d) }); t \ge 0 )
$$
denote a $d$-dimensional process, 
and let
$$(\FF_t := \sigma \{ B_s, 0 \le s \le t \} , t \ge 0)$$ 
denote its filtration. 
It follows immediately from the property of stationary independent
increments that if $(B_t)$ is a Brownian motion, then
\begin{itemize}
\item[(i)] $(B_t^{(i)})$ is an $(\FF_t)$ martingale with continuous paths, for each $i$;
\item[(ii)] $( B_t^{(i)} B_t^{(j)} )$ is an $(\FF_t)$ martingale for $1 \le i < j \le d $;
\item[(iii)] $ (  ( B_t^{(i)} )^2 - t ))$ is an $(\FF_t)$ martingale for each $1 \le i \le d$
\end{itemize}
It is an important result, due to \Lev, that if a 
$d$-dimensional process $(B_t)$ has these three properties relative to the
filtration $(\FF_t)$ that it generates, then $(B_t)$ is a Brownian
motion. Note how a strong conclusion regarding the distribution of
the process is deduced from what appears to be a much weaker collection
of martingale properties.
Note also that continuity of paths is essential: if $(B_t)$ is any
process with stationary independent increments such that 
$\ER( B_1 ) = 0 $ and $\ER (B_1^2 ) = 1$, for instance $B_t := N_t - t$
where $N$ is a Poisson process with rate $1$, then both $(B_t)$ and
$(B_t ^2 - t )$ are martingales.  

More generally, if a process $(B_t)$ has the above three properties
relative to some filtration $(\FF_t)$ with
$$\FF_t \supseteq \BB_t:= \sigma \{ B_s, 0 \le s \le t \} ~~~~~~(t \ge 0),$$ 
then it can be concluded that $(B_t)$ is an 
\emph{$(\FF_t)$ Brownian motion},
meaning that $(B_t)$ is a Brownian motion and that for all $0 \le s \le t$ 
the increment $B_t - B_s$ is independent of $\FF_s$.

\subsection{\Ito's formula}
It is a key observation that if $X$ is a Markov process with semigroup $(P_t)$, and $f$ and $g$ 
are bounded Borel functions with $\gen f = g$, then the equality between
the first and last expressions in the Chapman-Kolmogorov equation \re{ptf1} can be recast as
\eq
\lb{genfg}
\ER^x [ f(X_T) ] - f(x) =  \ER^x \int_0^T ds \,g (X_s)  ~~~~~(T \ge 0, x \in E ).
\en
Equivalently, by application of the Markov property,
\eq
\lb{genfg1}
M_t^f:=  f(X_t) - f(X_0) - \int_0^t ds \, g (X_s), t \ge 0 ) \mbox{ is a } (\PR^x,\FF_t) \mbox{ martingale}
\en
for all $x \in E$. The optional stopping theorem applied to this martingale
yields \emph{Dynkin's formula},
\cite[\S 10]{rw1}, according to which \re{genfg} holds also for $(\FF_t)$
stopping times $T$ with $\ER^x (T) < \infty$.
If $X = B$ is  a one-dimensional Brownian motion, and $f \in C^2_b$,
meaning that $f$, $f^\prime$ and $f^{\prime \prime}$ are all bounded and continuous, 
then $\gen f = \hf f^{\prime \prime}$ and \re{genfg1} reads
\eq
\lb{pre-ito}
f(B_t) - f(B_0) = M_t ^f + \hf 
\int_0^t  f^{\prime \prime}(B_s) ds .
\en
To identify more explicitly the martingale $M_t^f$ appearing here,
consider a subdivision of $[0,t]$ say
$$
0 = t_{n,0} < t_{n,1} < \cdots < t_{n,k_n} = t
$$
with mesh
$$
\max_{i} ( t_{n,i+1} - t_{n,i} ) \te 0 \mbox{ as } n \te \infty .
$$
By a second order Taylor expansion,
\eq \lb{dd1}
f(B_t) - f(B_0) = 
\sum_i f^\prime(B_{t_{n,i}})( B_{t_{n,i+1} } - B_{t_{n,i}} )
+ \hf 
\sum_i f^{\prime \prime} ( \Theta_{n,i})( B_{t_{n,i+1} } - B_{t_{n,i}} )^2
\en
for some $\Theta_{n,i}$ between $B_{t_{n,i} }$  and $B_{t_{n,i+1}}$. 
Since $\Theta_{n,i}$ is bounded and $( B_{t_{n,i+1} } - B_{t_{n,i}} )^2$ has mean
$t_{n,i+1}  - t_{n,i}$ and variance a constant times $(t_{n,i+1}  - t_{n,i})^2$, it is
easily verified that as $n \te \infty$ there is the following easy extension of the
fact \re{qvar} that the quadratic variation of $B$ on $[0,t]$ equals $t$:
\eq \lb{dd2}
\sum_i f^{\prime \prime} ( \Theta_{n,i}) ( B_{t_{n,i+1} } - B_{t_{n,i}} )^2
-
\sum_i f^{\prime \prime} ( \Theta_{n,i}) ( t_{n,i+1}  - t_{n,i} )
\te 0 \mbox{ in } L^2
\en
while the second sum in \re{dd2} is a Riemann sum which 
approximates $\int_0^t  f^{\prime \prime}(B_s) ds$ almost surely.
Consequently, the first sum must converge to the same limit in $L^2$, and we learn from \re{dd1} that the
martingale $M_t^f$ in \re{pre-ito} is 
\eq
\lb{dd3}
M_t^f = \int_0^t f^\prime (B_s) dB_s := 
\lim _{ n \te \infty} \sum_i f^\prime(B_{t_{n,i}})( B_{t_{n,i+1} } - B_{t_{n,i}} )
\en
where the limit exists in the sense of convergence in probability. Thus we obtain a first version 
of \emph{\Ito's formula}: for $f$ which is bounded with two bounded continuous derivatives:
\eq
\lb{pre-ito1}
f(B_t) - f(B_0) = \int_0^t f^\prime(B_s) dB_s + \hf \int_0^t  f^{\prime \prime}(B_s) ds
\en
where the \emph{stochastic integral} $(\int_0^t f^\prime(B_s) dB_s, t \ge 0)$ is an
$(\FF_t)$-martingale if $B$ is an $(\FF_t)$-Brownian motion.
\Ito's formula \re{pre-ito1}, along with an accompanying theory of stochastic integration
with respect to Brownian increments $dB_s$, has been extensively generalized to a
theory of stochastic integration with respect to semi-martingales \cite{ry99}. 
The closely related theory of \emph{Stratonovich stochastic integrals} is obtained
by defining for instance
\eq
\lb{stratint}
\int_0^t f^\prime (B_s) \circ dB_s := 
\lim _{ n \te \infty} \sum_i \hf ( f^\prime(B_{t_{n,i}}) + f^\prime(B_{t_{n,i+1}}) ) 
( B_{t_{n,i+1} } - B_{t_{n,i}} ) .
\en
This construction has the advantage that it is better connected to geometric 
notions of integration, such as integration of a differential form along a continuous path
\cite{MR80d:60075}
\cite{MR81h:58061}
\cite{MR84e:60084}
and there is the simple formula
\eq
\lb{pre-strat}
f(B_t) - f(B_0) = \int_0^t f^\prime(B_s) \circ dB_s  .
\en
However, the important martingale property of \Ito\ integrals is hidden by the
Stratonovich construction. 
See \cite[Chapter 3]{oks03} and
\cite[Chapter 8]{MR1980149} 
for further comparison of \Ito\ and Stratonovich integrals.

\subsection{Stochastic integration}
The theory of stochastic integration defines integration of 
suitable random integrands $f(t,\omega)$ with respect to
random ``measures'' $dX_t(\omega)$ derived from suitable
stochastic processes $(X_t)$. The principal definitions in this 
theory, of \emph{local martingales} and \emph{semimartingales}, are motivated 
by a powerful calculus, known as \emph{stochastic} or \emph{\Ito}\ calculus, 
which allows the representation of various functionals of such processes 
as stochastic integrals.
For instance, the formula \re{pre-ito1} can be justified for $f(x) = x^2$ to identify
the martingale $B_t^2 - t$ as a stochastic integral:
\eq
\lb{ito1}
B_t ^2 - B_0^2 - t = 2 \int_0^t B_s dB_s .
\en
Similarly, the previous derivation of formula \re{dd3} is easily extended to a
Brownian motion $B$ in $\reals^\delta$ for $\delta = 1,2, 3, \ldots$ to show that
for $f \in C^2(\reals^\delta)$
\eq
\lb{pre-ito1}
f(B_t) - f(B_0) = \int_0^t (\nabla f)(B_s) \cdot dB_s + \hf \int_0^t  (\Delta f)(B_s) ds
\en
with $\nabla$ the gradient operator and $\Delta$ the Laplacian. Again, the stochastic
integral is obtained as a limit in probability of Riemann sums, along with the \Ito\ formula,
and the stochastic integral is a martingale in $t$.

It is instructive to study carefully what happens in \Ito's formula \re{pre-ito1} for $\delta \ge 2$
if we take $f$ to be a radial harmonic function with a pole at $0$, say
$$
f(x ) = \log | x |   \mbox{ if } \delta = 2
$$
and
$$
f(x)  = |x|^{2 - d} \mbox{ if } \delta \ge 3
$$
these functions being solutions on $\reals^\delta - \{0\}$ of
\emph{Laplace's equation}  $\Delta f = 0$, so the last term in \re{pre-ito1} vanishes. 
Provided $B_0 = x \ne 0$ the remaining stochastic integral is a well-defined
almost sure limit of Riemann sums, and moreover $f(B_t)$ is integrable, and even square integrable for
$\delta \ge 3$. It is tempting to jump to the conclusion that $(f(B_t), t \ge 0 )$ is a martingale. 
But this is not the case.  Indeed, it is quite easy to compute $\ER^x f(B_t)$ in these
examples, and to check for example that this function of $t$ is strictly decreasing for $\delta \ge 3$.
For $\delta = 3$, according to a relation discussed further in Section \ref{sec:brmeex},
$\ER^x (1/|B_t|)$ equals the probability
that a one-dimensional Brownian motion started at $|x|$ has not visited $0$ before time $t$.
The process $(f(B_t), t \ge 0 )$ in these examples is not a martingale but rather a \emph{local martingale}.

Let $X$ be a real-valued process, and
assume for simplicity that $X$ has continous paths and $X_0 = x_0$ for some 
fixed $x_0$. Such a process $X$ is called a \emph{local martingale} relative
to a filtration $(\FF_t)$ if for each $n = 1,2, \ldots$, the 
stopped process
$$
(X_{t \wedge T_n (\omega)} (\omega), t \ge 0 , \omega \in \Omega)
\mbox{ is an } ( \FF_t) \mbox{ martingale }
$$
for some sequence of stopping times $T_n$ increasing to $\infty$, which 
can be taken without loss of generality to be
$
T_n : = \inf \{ t : |X_t| > n \}.
$
For any of the processes $(f(B_t), t \ge 0)$ considered above for
a harmonic function $f$ with a pole at $0$, these processes stopped when they
first hit $\pm n$ are martingales, by consideration of \Ito's formula \re{pre-ito1}
for a $C^2$ function $\hat {f}$ which agrees with $f$ where $f$ has values in
$[-n,n]$, and is modified elsewhere to be $C^2$ on all of $\reals^\delta$.
Consideration of these martingales obtained by stopping processes is very useful, because by
application of the optional sampling theorem they
immediately yield formulae for hitting probabilities of the radial part of $B$ in $\reals^\delta$,
as discussed in \cite[I.18]{rw1}. 
A {\em continuous semimartingale} $X$ is the sum of a continuous local martingale and a process
with continous paths of locally bounded variation.

Given a filtration $(\FF_t)$,
a process $H$ of the form
$$
H_s(\omega) := \sum_{i = 1}^k H_i (\omega)1[ T_i(\omega) < s \le T_{i+1}(\omega)]
$$
for an increasing sequence of stopping times $T_i$, and 
$H_i$ an $\FF_{T_i}$ measurable random variable, is called
an \emph{ elementary predictable process}. If $B$ is an
$(\FF_t)$ Brownian motion, and
$H$ is such an elementary predictable process,
one can define
\eq
\lb{hdef}
\int_0^t H_s d B_s  := \sum_{i = 1}^k H_i ( B _{t \wedge T_{i +1} } - B _{t \wedge T_i } )
\en
and check the identity
\eq
\lb{hid}
\ER \left[ \left( \int_0^t H_s d B_s \right) ^2 \right] = 
\ER \left[ \int_0^t H_s^2 ds \right] 
\en
which allows the definition \re{hdef} to be extended by completion in $L^2$ 
to any pointwise limit $H$ of elementary predictable processes such that
\eq
\lb{hid2}
\ER \left[ \int_0^t H_s^2 ds \right]  < \infty
\en
for each $t >0$, and the identity \re{hid} then holds for such a limit process
$H$.
Replacing $H(s,\omega)$ by $H(s,\omega)1( s \le T(\omega))$, it follows
that both
\eq
\lb{hid3}
\int_0^t H_s dB_s \mbox{ and }
\left( \int_0^t H_s dB_s \right)^2 -
\int_0^t H_s ^2 ds 
\en
define $(\FF_t)$ martingales.

A similar stochastic integral, with $B$ replaced by an $(\FF_t)$ martingale
or even a local martingale $M$, is obtained hand in hand with the existence
of an increasing process $\brk{M}$ such that
$$
(M_t^2 - \brk{M}_t, t \ge 0 ) \mbox{ is an $(\FF_t)$ martingale}
$$
and the above discussion \re{hdef} -\re{hid} -\re{hid2} -\re{hid3} generalizes
straightforwardly with $dM_s$ instead of $dB_s$ and $d \brk{M}_s$ instead
of $ds$.
In particular, there is then the formula
$$
M_t^2 - M_0^2  - \brk{M}_t = 2 \int_0^t M_s dM_s 
$$
of which \re{ito1} is the special case for $M = B$.
See e.g. Yor \cite{MR689534} 
and Lenglart \cite{MR719513} 
for details of this approach to \Ito's formula for continuous semimartingales.

Let $M$ be a continuous  $(\FF_t)$ local martingale and $A$ an $(\FF_t)$
adapted continuous process of locally bounded variation. Then for suitably
regular functions $f = f(m,a)$ there is the following form of
\Ito's formula for semimartingales:
\eq\lb{ito1}
f(M_t,A_t) - f(M_0,A_0) = 
\int_0^t f_m ^\prime (M_s, A_s) d M_s 
+ \int_0^t f_a ^\prime (M_s, A_s) dA_s 
+ {1 \over 2} 
\int_0^t f^{\prime\prime} _{m,m} 
(M_s,A_s) d \brk{M}_s
\en
where 
$$
f_{m}^\prime (m,a):= {\partial \over \partial m } f(m,a); ~~~~
f_{a}^\prime (m,a):= {\partial \over \partial a } f(m,a);
f_{m,m}^{\prime \prime} (m,a):= {\partial \over \partial m } {\partial \over \partial m } f(m,a).
$$
Note that the first integral on the right side of \re{ito1} defines
a local martingale, and that the sum of the second and third integrals
is a process of locally bounded variation. It is the third integral,
involving the second derivative $f_{m,m}$, which is the special
feature of \Ito\ calculus.

More generally,  for a vector of $d$ local martingales
$M = (M^{(i)}, 1 \le i \le d )$, and a process $A$ of
locally bounded variation, \Ito's formula reads
$$
f(M_t ,A_t) - 
f(M_0 ,A_0) 
= \int_0^t \nabla_m f(M_s,A_s ) \cdot dM_s
+ \int_0^t \nabla_a f(M_s,A_s ) \cdot dA_s
$$
$$
+ {1 \over 2 } \int_0^t \sum_{i,j = 1}^d f_{m_i,m_j}^{\prime \prime }( M_s, A_s ) d \brk{ M^{(i)}, M^{(j)} } _s
$$
where for two local martingales $M^{(i)}$ and $M^{(j)}$, their {\em bracket}
$\brk{ M^{(i)}, M^{(j)}} $ is the unique continuous process $C$ with
bounded variation such that $M^{(i)}_t M^{(j)}_t - C_t$ is a local martingale.
See Section \ref{sec:pdes} for connections between \Ito's formula and
various second order partial differential equations.

\subsection{Construction of Markov processes}

\subsubsection{Stochastic differential equations}

One of \Ito's fundamental insights was that the theory of stochastic integration could
be used to construct a diffusion process $X$ in $\reals^N$ as the solution
of a stochastic differential equation
\eq\lb{itosde}
dX_t = \sigma(X_t) dB_t + b(X_t) dt
\en
for $\sigma : \reals^N \to \matrices^{N \times N}$ a field of matrices and
$b : \reals^N \to \reals^{N}$ a vector field. More formally, the meaning of
\re{itosde} with initial condition $X_0 = x \in \reals^N$ is that
\eq\lb{itosde1}
X_t = x + \int_0^t \sigma(X_s) dB_s + \int_0^t b(X_s) ds .
\en
It is known that under the hypothesis that $\sigma$ and $b$ are Lipschitz, 
the equation \re{itosde1} has a \emph{unique strong solution}, meaning that for a given
Brownian motion $B$ and initial point $x$ the path of $X$ is uniquely determined  almost surely 
for all $t \ge 0$. Moreover, this solution is obained as the limit of the classical Picard
iteration procedure, the process $X$ is adapted to the filtration  $(\FF_t)$ generated by $B$,
and the family of laws of $X$, indexed by the initial point $x$, defines a 
Markov process of Feller-Dynkin type with state space $\reals^N$.
See \cite{ry99} for details.
For a given Brownian motion $B$, one can consider the dependence of
the solution $X_t$ in \re{itosde1} in the initial state $x$, say
$X_t = X_t (x)$. Under suitable regularity conditions the map $x \te X_t(x)$
defines a random diffeomorphism from $\reals^N$ to $\reals^N$. 
This leads to the notion of a \emph{Brownian flow of diffeomorphisms}
as presented by Kunita \cite{MR91m:60107}.

For $f \in C_b^2$, \Ito's formula applied to $f(X_t)$ shows that
$$
M_t^f:= f(X_t) - f(X_0) - \int_0^t \gen f (X_s) ds \mbox{ is an } (\FF_t) \mbox{ martingale,}
$$
where
\eq
\lb{gendag}
\gen  f(x)  := \hf 
\sum_{i,j} a^{ij} (x) 
{ \partial ^2 f \over \partial x_i \partial x_j } (x)
+
\sum_{i} b_i(x) (x) 
{ \partial f \over \partial x_i } (x)
\en
with $a(x):= (\sigma^T \sigma)(x)$.
Thus, the infinitesimal generator of $X$, restricted to $C^2_b$, is the elliptic operator
$\gen$ defined by \re{gendag}.
To see the probabilistic meaning of the coefficients $a^{ij}(x)$, observe that
if we write $X_t = (X_t^i, 1 \le i \le N)$ and $M_t^i$ instead of $M_t^f$ for $f(x) = x_i$,
so
$$
M_t^i = X_t^i - X_0^i - \int_0^t b_i(X_s) ds
$$
then
$$
\langle M^i, M^j \rangle_t = \int_0^t a^{ij} (X_s ) ds
$$
and more generally
$$
\langle M^f, M^g \rangle_t = \int_0^t ds \Gamma (f,g) (X_s) ds
$$
where
\eq\lb{sqfield}
\Gamma(f,g) (x) := \nabla f (x) \cdot ( a(x) \nabla g (x) ) = 
\gen( f g ) (x ) - f(x) \gen g (x) - g(x) \gen f(x)
\en
is the \emph{square field operator} (\emph{\ocdc}).
In particular, $a^{ij}(x) = \Gamma(x_i,x_j)(x)$. 
For a Markov process $X$ on a more general
state space, 
Kunita \cite{MR91m:60107}
takes a basis of functions $(u_i)$ with respect to which
he considers an infinitesimal generator of the
form
$$
\gen f = \hf \sum_{i,j} \Gamma( u_i, u_j ) (x) { \partial^2 f \over \partial u_i \partial u_j}(x) + \cdots
$$
with $\cdots$  a sum of drift terms of first order and integral terms related to jumps.
See Bouleau and Hirsch \cite{MR93e:60107} for further developments.

\subsubsection{One-dimensional diffusions}
\label{sec:onedimdiff}
The Bessel processes defined as the radial parts of Brownian motion in $\reals^\delta$ 
are examples of {\em one-dimensional diffusions}, that is to say strong Markov processes with continuous
paths whose state space is a subinterval of $\reals$. Such processes have been extensively studied
by a number of approaches: through their transition densities as solutions of a suitable
parabolic differential equation, by space and time changes of Brownian motion, and as solutions of
stochastic differential equations. 
For instance, an {\em Ornstein-Uhlenbeck process} $X$ may be defined by the stochastic differential equation (SDE)
$$
X_0 = x;  ~~~ dX_t = d B_t + \lambda X_t dt
$$
for $x \in \reals$ and a constant $\lambda >0$. This SDE is taken to mean
$$
X_t = x + B_t + \lambda \int_0^t X_s ds
$$
which is one of the rare SDE's which can be solved explicitly:
$$
X_t = e^{\lambda t } \left( x + \int_0^t e^{- \lambda s } d B_s  \right) .
$$
The result is a Gaussian Markov process which admits a number of alternative representations.
See Nelson \cite{MR35:5001} for the physical motivation and background. 

Following is a list of texts on one-dimensional diffusions:

\begbib
\item[\cite{fr83bd}] D.~Freedman. \newblock {\em Brownian motion and diffusion} (1983).    
\item[\cite{MR33:8031}] K. It{\^o} and H.~P. McKean, Jr. \newblock {\em Diffusion processes and their sample paths} (1965).
\item[\cite{MR0247667}] P. Mandl. {\em Analytical treatment of one-dimensional Markov processes} (1968).
\item[\cite{rw1}] and \cite{rw2} L.~C.~G. Rogers and D.~Williams. \newblock {\em Diffusions, Markov Processes and Martingales, Vols. 1 and 2} (1994).
\endbib

See also 
\cite{borsal02} for an extensive table of Laplace transforms of functionals one-dimensional diffusions, including Brownian motion, Bessel processes and
the Ornstein-Uhlenbeck process.

\subsubsection{Martingale problems}

Kunita \cite{kunita69} used \re{genfg1} to define the 
\emph{extended infinitesimal generator $\gen$} of a Markov process $X$ 
by the correspondence between pairs of bounded Borel functions $f$ and
$g$ such that \re{genfg1} holds. 
This leads to the idea of defining the family of probability measures
$P^x$ governing a Markov process $X$ via the \emph{martingale problem}
of finding $\{\PR^x\}$ such that \re{genfg1} holds whenever 
$\gen f = g$ for some prescribed infinitesimal generator $\gen$ such
as \re{bgen1}.
This program was carried out for diffusion processes 
by Stroock and Varadhan \cite{MR81f:60108}.
See  also \cite[\S III.13]{rw1}, \cite[Chapter VII]{ry99}. 
Komatsu \cite{komatsu73} and Stroock \cite{MR55:6587} treat the case of Markov processes 
with jumps.

\subsubsection{Dirichlet forms}

The \sqf\ $\Gamma$ introduced in \re{sqfield} leads to naturally to consideration of 
the \emph{Dirichlet form}
$$
\epsilon_{\mu}(f,g) := \int \mu(dx) \Gamma(f,g) (x)
$$
where $\mu$ is an invariant measure for the Markov process. In the case of Brownian motion
on $\reals^N$,
the Dirichlet form is
$$
\int_ dx \nabla f (x) \cdot \nabla g(x)
$$
where $dx$ is Lebesgue measure.
A key point is that this operator on pairs of functions $f$ and $g$,
which makes sense for $f$ and $g$ which may not be twice differentiable, can be used to characterize BM.
See the following texts for development of this idea, and the general notion of a \emph{Dirichlet process} 
which can be built from such an operator.

\begbib
\item[\cite{MR96f:60126}] M. Fukushima, Y. {\=O}shima, and M. Takeda. \newblock {\em Dirichlet forms and symmetric {M}arkov processes}, (1994)
\item[\cite{MR95c:31001}]E.~Fabes, M.~Fukushima, L.~Gross, C.~Kenig, M.~R{\"o}ckner, and D.~W. Stroock. \newblock {\em Dirichlet forms}, (1993).
\item[\cite{MR81f:60105}] M. Fukushima. \newblock {\em Dirichlet forms and {M}arkov processes}, (1980).
\item[\cite{MR93e:60107}]N. Bouleau and F. Hirsch. \newblock {\em Dirichlet forms and analysis on {W}iener space}, (1991).
\item[\cite{MR96f:31001}]Z.~M. Ma, M.~R{\"o}ckner, and J.~A. Yan, editors. \newblock {\em Dirichlet forms and stochastic processes}. (1995).
\item[\cite{MR94d:60119}]Z. ~M. Ma and M. R{\"o}ckner. \newblock {\em Introduction to the theory of (nonsymmetric) {D}irichlet forms} (1992).
\item[\cite{MR99e:31002}] J. Jost, W. Kendall, U. Mosco, M. R{\"o}ckner and   K.-T. Sturm. \newblock {\em New directions in {D}irichlet forms}, (1998).
\endbib

\subsection{Brownian martingales}
\subsubsection{Representation as stochastic integrals}
A (local) martingale relative to the natural filtration
$(\BB_t, t \ge 0)$ of a $d$-dimensional Brownian motion
$$
( B_t^{(1)}, \ldots, B_t^{(d) }); t \ge 0 )
$$
is called a {\em Brownian (local) martingale }.
According to an important result of 
\Ito 
and Kunita- Watanabe \cite{MR0217856},
every Brownian local martingale admits a continuous version
$(M_t, t \ge 0 )$ which may be written as
\eq
\lb{mg1}
M_t = c + \int_0 ^t m_s \cdot d B_s
\en
for some constant $c$ and some $\reals^d$-valued predictable process
$(m_s, s \ge 0)$ such that $\int_0^t |m_s|^2 ds < \infty$.

In particular, every $L^2( \BB_\infty)$ random variable $Y$ may be represented
as
\eq
\lb{mg2}
Y =  \ER (Y) + \int_0^\infty y_s \cdot dB_s
\en
for some $\reals^d$-valued predictable process $(y_s, s \ge 0)$ such that 
\eq
\lb{mg3}
\ER \left[ \int_0^\infty |y_s|^2 ds \right] < \infty .
\en
Such a representing process is unique in $L^2( \Omega \times \reals_+, \PR(\BB),
d \PR \, d s  )$.
The Clark-Ocone formula 
\cite{MR0270448} 
\cite{MR749372} 
gives some general expression for
$(y_s, s \ge 0)$ in terms of $Y$. This expression plays an important
role in Malliavin calculus. See references in Section \ref{sec:malliavin}.
    
A class of examples of particular interest arises when
$$
E[ Y \giv \BB_t ] = \Phi(t,\omega; B_t(\omega) )
$$
for suitably regular $\Phi(t,\omega,x)$. In particular, if $\Phi$
is of bounded variation in $t$, and sufficiently smooth in $x$,
one of Kunita's extensions of \Ito's formula gives
\eq
\lb{mg4}
E[ Y \giv \BB_t ] = E(Y) + \int_0^t \nabla_x  \Phi( s, \omega; B_s(\omega)) \boldsymbol{\cdot } d B_s 
\en
where $\nabla_x$ is the gradient operator with respect to $x$.
So, with the notations \re{mg1} and \re{mg2}, we get,
for the $L^2$-martingale $M_t = E[ Y \giv \BB_t ]$,
\eq
\lb{mg5}
m_s = y_s = \nabla_x  \Phi( s, \omega; B_s(\omega))
\en
See \cite{MR2015458,MR2324166} 
for some interesting examples of such computations.

\subsubsection{Wiener chaos decomposition}
\label{sec:chaos}
These representation results \re{mg1} and \re{mg2} may also be
regarded as corollaries of the 
\emph{Wiener chaos decomposition} of $L^2(\BB_\infty)$ as
\eq
\lb{mg5}
L_2(\BB_\infty) = \bigoplus_{n = 0}^\infty C_n
\en
where $C_n$ is the subspace of $L^2(\BB_\infty)$ spanned by
$n$th order multiple integrals of the form
$$
\int_0^\infty dB_{t_1}^{(i_1)} \int_0^{t_1} d B_{t_2}^{(i_2)} \cdots \int_0^{t_{n-1}} d B_{t_{n}}^{(i_n)} f_n(t_1, \ldots, t_n)
$$
for $f_n$ subject to
$$
\int_{0 \le t_n \le t_{n-1} \le \cdots \le t_1} dt_1 \cdots dt_n \, f_n^2(t_1, \ldots, t_n ) < \infty
$$
and $1 \le i_j \le d$ for $1 \le j \le n$.
This space $C_n$, consisting of iterated integrals obtained from
{\em deterministic} functions $f_n$, is called the $n$th Wiener chaos. 

To prove 
the martingale representation \re{mg1}-\re{mg2}, it suffices to establish
\re{mg2} for the random variable
$$
Y = \exp \left\{ \int_{0}^\infty f(u) \cdot d B_u - \hf \int_{0}^\infty | f(u)|^2 du \right\} .
$$
for $f \in L^2( \reals_+ \to \reals^d; du )$.
The formula \re{mg2} now follows from \Ito's formula, with
$$
y_s = f(s) \,  \exp \left\{ \int_{0}^s  f(u) \cdot d B_u - \hf \int_{0}^s | f(u)|^2 du \right\} .
$$
Similarly, the Wiener chaos representation \re{mg5} follows by consideration
of the generating function 
$$
\exp ( \lambda x - \hf \lambda^2 u ) = \sum_{n = 0}^ \infty { \lambda^n \over n!} H_n(x,u)
$$
of the Hermite polynomials $H_n(x,u)$, using the consequence of \Ito's
formula and $( \partial /\partial x ) H_n = H_{n-1}$ 
that
$$
H_n \left ( \int_0^t f(s) \cdot d B_s , \int_0^t | f(s)| ^2 ds \right)
=
\int_0^t H_{n-1} \left ( \int_0^s f(u) \cdot d B_u , \int_0^s | f(u)| ^2 du \right)
\, f(s) \cdot dB_s .
$$
We discuss in Section \ref{sec:quad} some techniques for
identifying the distribution of Brownian functionals in $C_0 \bigoplus C_2$.

It is known that if $X \in \bigoplus_{k = 0}^n C_k$ then there
exists $\alpha >0$ such that
$$
\ER \left[ \exp ( \alpha |X| ^{2/n} ) \right] < \infty
$$
and also some $\alpha_0$ such that for any $\beta > \alpha _0$
$$
\ER \left[ \exp ( \alpha |X| ^{2/n} ) \right]  = \infty
$$
assuming that $X = X_0 + \cdots + X_n$ with $X_i \in C_i$ and $X_n \ne 0$.
This gives some indication of the tail behaviour of distributions of
various Brownian functionals
Such results may be found in the book of
Ledoux and Talagrand  \cite{MR1102015}. 
We do not know of any exact computation of the law of
a non-degenerate  element of $C_3$, or of 
$C_0 \bigoplus C_1 \bigoplus C_2 \bigoplus C_3$.
We regard as ``degenerate" a variable such as
$(\int_0^\infty f(s) dB_s)^3$, whose law can be found by
simple transformation of the law of some element of 
$C_0 \bigoplus C_1 \bigoplus C_2$.  


\subsection{Transformations of Brownian motion}
In this section, we examine how a BM $(B_t, t \ge 0)$ is affected by
the following sorts of changes:
\begin{description}
\item[Locally absolutely continuous change of probability:]  The background probability 
law $\PR$ is modified by some density factor $D_t$ on the $\sigma$-field $\FF_t$ of
events determined by $B$ up to time $t$, to obtain a new probability law $\QR$.
\item[Enlargement of filtration:]  The background filtration, with respect to which $B$
is a Brownian motion, is enlarged in some way which affects the description of $B$ as
a semimartingale.
\item[Time change:] The time parameter $t \ge 0$ is replaced by some increasing family of
stopping times $(\tau_u, u \ge 0 )$.
\end{description}

The scope of this discussion can be expanded in many ways, to include e.g. the transformation
induced by a stochastic differential equation, or space-time transformations, scale/speed description of a diffusion,
reflection, killing, \Lev's transformation, and so on.
One effect of such transformations is that simple functionals of the transformed process are just
more complex functionals of BM.  This has provided motivation for the study of more and more complex functionals of BM.


\subsubsection{Change of probability}
\label{chprob}
The assumption is that the underlying probability $\PR$ is replaced by $\QR$ defined on
by
\label{chprob}
\eq
\lb{qdef}
\QR \eval _{\FF_t} = D_t \cdot \PR \eval _{ \FF_t }
\en
meaning that every non negative $\FF_t$-measurable trandom variable $X_t$
has $\QR$-expectation
$$
\ER^{\QR} X _t := \ER ( D_t X _t) .
$$
This definition is consistent as $t$-varies, and defines a probability distribution on
the entire path space, if and only if $(D_t, t \ge 0 )$ is an $(\FF_t, \PR )$ martingale. 
Then, Girsanov's theorem \cite[Ch. VIII]{ry99} states that
\eq
\lb{girs}
B_t = \tilde{B}_t + \int_0^t { d \langle D, B \rangle _s \over D_s }  ,
\en
with $(\tilde{B}_t )$ an $( (\FF_t), \QR )$ Brownian motion. In the first instance,
$(\tilde{B}_t )$ is just identified as an $( (\FF_t), \QR )$ local martingale.
But $\brk{ \tilde{B} } = \brk{B}_t =  t$, and hence
$(\tilde{B}_t )$ is an $( (\FF_t), \QR )$ Brownian motion, by \Lev's theorem.

This application of Girsanov's theorem has a number of important consequences for 
Brownian motion.
In particular, for each $f \in 
L_{\rm loc}^2 ( \reals_+, ds )
$ the law $\QR^{(f)}$ 
of the process
$$
\left( B_t + \int_0^t f(s) ds , t \ge 0 \right)
\ed \left( \tilde{B}_t + \int_0^t f(s) ds , t \ge 0 \right)
$$
is locally equivalent to the law $\PR$ of BM,
with the density relation
$$
\QR ^{(f)}  \eval _{\FF_t } =  D_t^{(f)}
\cdot \PR \eval _{ \FF_t } .
$$
where the density factor is
$$
D_t^{(f)} = \exp \left( \int_0^t f(s) d B_s - \hf \int_0^t f^2(s) ds   \right) = 1 + \int_0^t D_s^{(f)} f(s) dB_s .
$$
In other words, the Wiener measure $P$ is quasi-invariant under translations by functions
$F$ in the \emph{Cameron-Martin space}, that is 
$$
F(t) = \int_0^t f(s) ds \mbox{ for } f \in L_{\rm loc}^2 ( \reals_+, ds ) .
$$
As a typical application of the general Girsanov formula \re{girs}, the law $\PR _x^{\lambda}$ of the Ornstein-Uhlenbeck process of
Section \ref{ou} is found to satisfy 
$$
P_x^{\lambda} \eval _{\FF_t }
= \exp \left\{ {\lambda \over 2 } (B_t^2 - x^2 )  - {\lambda^2 \over 2 } \int_ 0^t B_s^2 ds \right\}
\cdot \PR^x \eval _{\FF_t}
$$
where the formula $\hf ( B_t^2 - x^2 ) = \int_0^t B_s d B_s$ has been used.

Girsanov's formula can also be applied to study the bridge of 
length $T$ defined by starting a diffusion process $X$ started at some
point $x$ at time $0$, and conditioning on arrival at $y$ at time $T$.
Then, more or less by definition 
\cite{MR1278079}, 
for $ 0 < s < T$
\eq
\lb{xbridge}
\ER^x [ F( X_u , 0 \le u \le s ) \giv X_T = y ] =
\ER^x \left[  F( X_u , 0 \le u \le s ) { p_{T-s}(X_s,y) \over p_T(x,y)} \right]
\en
where $p_t(x,y)$ is the transition density for the diffusion.
In particular, for $X$ a Brownian bridge of length $T$ from $(0,x)$
to $(T,y)$ we learn from Girsanov's formula that
\eq
\lb{xbridge1}
X_s = x + \beta_s + \int_0^s du \, { (y - X_u ) \over ( T - u ) }
\en
This discussion generalizes easily to a $d$-dimensional Brownian motion,
and to other Markov processes.
See e.g. \cite[\S 6.2.2]{MR1980149}. 
\subsubsection{Change of filtration}
\label{chfilt}
Consider now the description of an $(\FF_t)$ Brownian motion $(B_t)$ relative to some
larger filtration $(\GG_t)$, meaning that $\GG_t \supseteq \FF_t$ for each $t$.
Provided $\GG_t$ does not import too much information relative to $\FF_t$, the Brownian
motion $(B_t)$, and more generally every
$(\FF_t)$ martingale, will remain a $(\GG_t)$ semimartingale, or at worst a $(\GG_t)$ 
\emph{Dirichlet process}, 
meaning the sum of a $(\GG_t)$ martingale and a process with zero quadratic variation.
In particular, such an enlargement of filtration allows the original Brownian motion
$B$ to be decomposed as
$$
B_t = \tilde{B}_t + A_t
$$
where $(\tilde{B}_t)$ is (again by \Lev's characterization) a $(\GG_t)$ Brownian motion,
and $(A_t)$ has zero quadratic variation.
As an example, if we enlarge the filtration $(\FF_t)$ generated by $(B_t)$ to
$\GG_t$ generated by $\FF_t$ and $\int_0^\infty f(s) dB_s$ for some $f \in L^2( \reals_+, ds)$,
then 
\eq
\lb{star}
B_t = \tilde{B}_t + \int_0^t { ds \, f(s) \int_s^\infty f(u) d B_u \over \int_s^\infty f^2(u) du  }
\en
where $(\tilde{B}_t)$ is independent of the sigma-field $\GG_0$ of events generated by
$\int_0^\infty f(u) d B_u$.
The best known example arises when $f(s) = 1( 0 \le s \le T)$, so $\GG_t$ is 
generated by $\FF_t$ and $B_T$. Then \re{star} reduces to
$$
B_t = \tilde{B}_t + \int_0^{t \wedge T}  { ds \, ( B_T - B_s ) \over (T-s ) } .
$$
Since $(\tilde{B}_t)$ and $B_T$ are independent, we can condition on $B_T = y$ to deduce that
\newcommand{\bridge}{B^{\rm br}}
the Brownian bridge $(\bridge_t)$ 
of length $T$ from $(0,0)$ to $(T,x)$ can be related to an unconditioned
Brownian motion $\tilde{B}$ by the equation
$$
\bridge_t = \tilde{B}_t + \int_0^{t \wedge T}  { ds \, ( x - \bridge_s ) \over (T-s ) } .
$$
which can be solved explicitly to give
$$
\bridge_t = (t/T) y + ( T - t ) \int_0^{t}  { d \tilde{B}_s \over (T-s ) } 
$$
or again, by time-changing
$$
\bridge_t = (t/T) y + ( T - t ) \beta_{t/( T (T-t))}
$$
for another Brownian motion $\beta$.
Compare with Section \ref{sec:bridges}.

Other enlargements $(\GG_t)$ of the original Brownian filtration
$(\FF_t)$ can be obtained by turning some particular random times $L$
into $(\GG_t)$ stopping times, so $\GG_t$ is the $\sigma$-field
generated by $\FF_t$ and the random variable $L \wedge t$.
If $L = \sup \{ t: (t,\omega) \in A \}$ for some $(\FF_t)$ predictable
set $A$, then there is the decomposition
$$
B_t = \tilde{B}_t + 
\int_0^{t \wedge L } 
{ 
d \langle B, Z^L \rangle _s 
\over 
Z^L  _s 
}
+ 
\int_L^{t} 
{ 
d \langle B, 1 - Z^L \rangle _s 
\over 
1 - Z^L _s 
}
$$
where 
$Z^L  _t:= \PR( L >t \giv \FF_t)$ and 
$(\tilde{B}_t)$ is a $(\GG_t)$ Brownian motion.

The volume 
\cite{MR884713} 
provides many applications of the theory of enlargement of filtrations,
in particular to provide explanations in terms of stochastic calculus to path decompositions of 
Brownian motion at last exit times and minimum times.
See also \cite{MR604176}. 
The article of Jacod \cite{jacod1985grossissement} treats the problem of \emph{initial enlargement}
from $(\FF_t)$ to $( \GG_t)$ with $\GG_t$ the $\sigma$-field generated by $\FF_t$ and $Z$
for some random variable $Z$ whose value is supposed to be known at time $0$.
See also 
\cite[Ch. 6]{MR2273672}, 
\cite[Ch. 12]{MR1442263}, 
and \cite{MR3720124} 
for a recent overview.
On the other hand, the theory of progressive enlargements has developed very little since 1985.

\subsubsection{Change of time}
\label{sec:chtime}
If the time parameter $t \ge 0$ is replaced by some increasing and
right continuous family of stopping times $(\tau_u, u \ge 0 )$, then 
according to the general theory of semimartingales 
we obtain from Brownian motion $B$ a process $(B_{\tau_u}, u \ge 0 )$ which is
a semimartingale relative to the filtration $(\FF_{\tau_u}, u \ge 0 )$.
In particular, it is follows from the Burkholder-Davis-Gundy inequalities
\cite[\S IV.4]{ry99}
that if $\ER ( \sqrt{ \tau_u } ) < \infty $ then
$(B_{\tau_u}, u \ge 0 )$ is a martingale.
%
Monroe 
\cite{MR0343354} 
showed that every semimartingale can be obtained, in distribution, as $(B_{\tau_u}, u \ge 0 )$ for a suitable
time change process $(\tau_u)$.


A beautiful application of \Lev's characterization of BM
is the representation of continuous martingales as
time-changed Brownian motions. Here is the precise statement.
Let $(M_t , t \ge 0)$ be a $d$-dimensional continuous local martingale
relative to some filtration $(\FF_t)$, such that
\begin{itemize}
\item[(i)] $\langle M^{(i)} \rangle_t =  A_t $ for some increasing process $(A_t)$ with
$A_\infty = \infty$, and all $i$.
\item[(ii)] $\langle M^{(i)} , M^{(j)} \rangle _t \equiv 0 $,
which is to say that the product 
$M^{(i)}_t  M^{(j)}_t$ is an $(\FF_t)$ local martingale,
for all $i \ne j$. 
\end{itemize}
Let 
$$
\tau_t := \inf \{s : A_s > t \} \mbox { and } B_t := M_{\tau_t}  .
$$
Then the process  $(B_t)$ is an $(\FF_{\tau(t)})$ Brownian motion,
and 
\eq
\lb{starstar}
M_u = B_{A_u}  ~~~~~(u \ge 0).
\en
Doeblin in 1940 discovered the instance of this result 
for $d = 1$ and $M_t = f(X_t) - \int_0^t (Lf)(X_s) ds$ for $X$ a 
one-dimensional diffusion and $f$ a function in the domain of 
the infinitesimal generator $L$ of $X$. See \cite[p. 20]{MR1885582}. 
Dambis \cite{dambis65} and Dubins-Schwarz \cite{MR31:2756} gave the general
result for $d=1$, while 
Getoor and Sharpe 
\cite{MR0305473} 
formulated
it for $d = 2$, as discussed in the next subsection.

As a simple application of \re{starstar}, we mention the following:
if $(M_u, u \ge 0 )$ is a non-negative local martingale, such that
$$
M_0 = a \mbox{ and } \lim_{u \to \infty } M_u = 0
$$
then 
\eq
\lb{dmax}
\PR \left( \sup_{u \ge 0} M_u \ge x \right) = a/x        ~~~~~(x \ge a ).
\en
To prove \re{dmax}, it suffices thanks to \re{starstar} to check it
for $M$ a Brownian motion started at $a$ and stopped at its first
hitting time of $0$. The conclusion \re{dmax} can also be deduced quite easily by optional sampling.
See \cite{MR2234288} for further results in this vein. 

\subsubsection{Knight's theorem}
\label{sec:ktthm}
A more general result  on time-changes is obtained by consideration of
a $d$-dimensional continuous local martingale $(M_t , t \ge 0)$ 
relative to some filtration $(\FF_t)$, such that
$$\langle M^{(i)} , M^{(j)} \rangle _t \equiv 0 $$
and each of the processes $\langle M^{(i)} \rangle _t $ grows to
infinity almost surely, but these increasing processes  are
not necessarily identical. Then, Knight's theorem \cite[Theorem V (1.9)]{ry99}
states that if $(B_u^{(i)})$ denotes the Brownian motion such that
$M_t^{(i)} = B^{(i)}_{ \langle M^{(i)} \rangle_t}$, then
the Brownian motions $B^{(1)}, \ldots, B^{(d)}$ are independent.

As an example, if $\Gamma^{(i)}$ for $1 \le i \le d$ are
$d$ disjoint Borel subsets of $\reals$, each with positive Lebesgue
measure, then 
$$
\int_{0}^t 
1( B_s \in \Gamma^{(i)}) 
dB_s  = B^{(i)} \left( \int_0^t 
1( B_s \in \Gamma^{(i)}) ds
\right)
$$
for some independent Brownian motions $B^{(i)}$.

\subsection{BM as a harness}

Another characterization of BM is obtained by considering the
conditional expectation of $B_u$ for some $u \in [s,t]$
conditionally given the path of $B_v$ for $v \notin (s,t)$. As a
consequence of the Markov property of $B$ and exchangeability of increments,
there is the basic formula
\eq \lb{hrn}
\ER [ B_u \giv B_v, v \notin (s,t) ] = { t - u \over t -s } B_s + {u-s \over t-s} B_t
~~~~~(0 \le s <  u  <  t )
\en
which just states that given the path of $B$ outside of $(s,t)$, the 
path of $B$ on $(s,t)$ is expected to follow the straight line from 
$(s,B_s)$ to $(t,B_t)$.
Following Hammersley 
\cite{MR0224144} 
a process $B$ 
with this property is called a {\em harness}.
D. Williams 
showed around 1980  
that every harness with continuous paths parameterized by $[0,\infty)$
may be represented as
$(\sigma B_s + \mu s , s \ge 0 )$ for $\sigma$ and $\mu$ two
random variables which are measurable with respect to the germ
$\sigma$-field
$$
\cap_{0 < s < t < \infty}  \sigma( B_v,  v \notin (s,t) ) .
$$
See also Jacod-Protter \cite{MR929066} 
who showed that every integrable \Lev\ process is a harness. 
Further discussion and references can be found in
\cite{MR2111197}. 

\subsection{Gaussian semi-martingales}

In general, to show that a given adapted, right continuous process is, or is not, a semimartingale, may be quite subtle. 
This question for Gaussian processes was studied by Jain and Monrad \cite{MR650607}, 
Stricker
\cite{MR85c:60054} 
\cite{MR87k:60107} 
and Emery \cite{MR688911}. 
Interesting examples of Gaussian processes which are not semimartingales are the fractional Brownian motion, for all values of their Hurst parameter $H$ except $1/2$ and $1$.
Many studies, including the development of adhoc stochastic integration, have been made for fractional Brownian motions. 
In particular, P. Cheridito  \cite{MR1873835} obtained the beautiful result that the addition of a fractional Brownian motion with Hurst parameter $H > 3/4$ and an independent Brownian motion produces a semimartingale.

\subsection{Generalizations of martingale calculus}

Stochastic calculus for processes with jumps:
Meyer's appendix \cite{meyer89si}, 
Meyer's course  \cite{mey76},
the books of Chung-Williams \cite{MR92d:60057} and Protter 
\cite{MR1675059}. 
Extension of stochastic calculus to {\em Dirichlet processes}, that is sums of
a martingale and a process of vanishing quadratic variation:
Bertoin \cite{MR941983}. 
Anticipative stochastic calculus of Skorokhod and others: some references 
are
\cite{MR1160401} 
\cite{MR920267} 
\cite{MR3308895}.  

\subsection{References}

\paragraph{Martingales}

Most modern texts on  probability and stochastic processes contain an
introduction at least to discrete time martingale theory. Some further references are:

\begbib

\item[\cite{MR56:6844}]A. ~M. Garsia. \newblock {\em Martingale inequalities: {S}eminar notes on recent progress}.  (1973)
\item[\cite{MR53:6728}] J. Neveu. \newblock {\em Martingales \`a temps discret}. (1972).
\item[\cite{MR93d:60002}] D. Williams. \newblock {\em Probability with martingales}. (1991) 
\endbib

\paragraph{Semi-Martingales}

Pioneering works:
\begbib
\item[\cite{MR49:4092}]J. Pellaumail. \newblock {\em Sur l'int\'egrale stochastique et la d\'ecomposition de   {D}oob-{M}eyer} (1973).

\item[\cite{MR58:7841}]A.~U. Kussmaul. \newblock {\em Stochastic integration and generalized martingales} (1977).
\endbib

The following papers present a definitive account of 
semi-martingales as ``good integrators''

\begbib
\item[\cite{MR82k:60122}]K. Bichteler. \newblock {\em Stochastic integrators} (1979).

\item[\cite{MR82g:60071}]K. Bichteler. \newblock {\em Stochastic integration and {$L^{p}$}-theory of semimartingales} (1981).

\item[\cite{MR83i:60069}]C.~Dellacherie. \newblock {\em Un survol de la th\'eorie de l'int\'egrale stochastique} (1980).

\endbib

Elementary treatments:

\begbib
\item[\cite{MR97k:60148}]R. Durrett. \newblock {\em Stochastic calculus: a practical introduction} (1996).
\item  [\cite{MR1783083}] J. M.  Steele. \newblock{\em Stochastic calculus and financial applications.} (2001)
\endbib

\paragraph{Stochastic integration: history}

Undoubtedly, the inventor of Stochastic Integration is K. It\^o,
although there were some predecessors: Paley and Wiener who
integrated deterministic functions against Brownian motion, and L\'evy
who tried to develop a stochastic integration framework by
randomizing the Darboux sums, etc... However, K. It\^o stochastic
integrals, which integrate, say, predictable processes against
Brownian motion proved to provide the right level of generality to
encompass a large number of applications. In particular, it led to
the definition and solution of stochastic differential equations,
for which in most cases, Picard's iteration procedure works, and
thus, probabilists were handed a pathwise construction of many
Markov processes, via It\^o construction. To appreciate the scope of
It\^o achievement, we should compare the general class of Markov processes 
obtained through his method with those which Feller obtained from Brownian motion
via time and space changes of variables. Feller's method works extremely well in one
dimension, but does not generalize easily to higher dimensions.
It\^o's construction was largely unappreciated until the publication of McKean's wonderful book \cite{MR40:947} 25 years after It\^o's original paper.
In 1967 Paul-Andr\'e Meyer expounded 
S\'eminaire de Probabilit\'es I \cite{MR0231445} 
the very
important paper of Kunita and Watanabe \cite{MR0217856} 
on the application of Stochastic
integration to the study of martingales associated with Markov processes. 
It\^o theory became better known in France in 1972, following a well attended
course by J. Neveu, which was unfortunately only recorded in handwritten form. 
The next step was taken by Paul-Andr\'e Meyer in his course 
\cite{MR54:14091}
where he blended the It\^o--Kunita--Watanabe development with the general theory of processes, 
to present stochastic integration with respect to general semi martingales. 
Jacod's Lecture Notes  
\cite{MR542115} 
built on the Strasbourg theory 
of predictable and dual predictable projections, and so forth, 
aiming at the description of all martingales with respect to the filtration of a given
process, such as a L\'evy process. 
A contemporary to Jacod's lecture notes is the book
of D. Stroock and S. Varadhan \cite{MR81f:60108},
where the martingale problem associated with an infinitesimal generator
is used to characterize and construct diffusion processes, thereby extending 
L\'evy's characterization of Brownian motion.

\begbib

\item[\cite{MR40:947}]H.~P. McKean, Jr. \newblock {\em Stochastic integrals} (1969).

\item[\cite{MR54:14091}]P.-A. Meyer. \newblock {\em Martingales and stochastic integrals. {I}. } (1972).

\item[\cite{MR58:18721}]P.~A. Meyer. \newblock {\em Un cours sur les int\'egrales stochastiques} (1976).

\item[\cite{MR92d:60057}]K.~L. Chung and R.~J. Williams. \newblock {\em Introduction to stochastic integration} (1990).

\item[\cite{meyer89si}]P.-A. Meyer. \newblock {\em A short presentation of stochastic calculus} (1989).

\endbib

\section{Brownian functionals}
\subsection{Hitting times and extremes}

For $x\in\mathbb{R}$, let $T_x: = \inf\{t:t\geq 0, B_t=x\}$.  
Then for $a,b>0$, 
by optional sampling,
$$\PR^0(T_a<T_{-b})=\frac{b}{a+b}$$
and hence
$$\PR^0(T_x<\infty)=1 \mbox{  for all } x\in\mathbb{R}.$$
Let 
$$M_t:= \max_{0\leq s\leq t} B_s$$ and notice that $(T_x,x\geq 0)$ is the left
continuous inverse for $(M_t,t\geq 0)$.  
Since $(M_t\geq x)=(T_x\leq t)$, if we know the distribution of $M_t$ for all
${t>0}$ then we know the distribution of $T_x$ for all ${x>0}$.  
Define the reflected 
path,
\begin{displaymath}
\hat{B}(t)=\left \{ \begin{array}{ll}
B(t) & \textrm{if $t\leq T_x$}\\
x-(B(t)-x) & \textrm{if $t>T_x$}
\end{array} \right .
\end{displaymath} 
By the Strong Markov Property and the fact that $B$ and $-B$
are equal in distribution we can deduce the {\em reflection principle} that $\hat{B}$ and $B$ are equal in
distribution: $\hat{B} \ed B$. Rigorous proof of this involves some measurability issues:  see e.g. 
Freedman \cite[\S 1.3]{fr83bd} 
Durrett \cite{MR2722836} for details.
Observe that for $x,y>0$,
$$(M_t\geq x,B_t\leq x-y)=(\hat{B}_t\geq x+y)$$
so $$\PR^0(M_t\geq x,B_t\leq x-y)=\PR^0(B_t\geq x+y)$$
Taking $y=0$ in the previous expression we have
$$\PR^0(M_t\geq x,B_t\leq x)=\PR^0(B_t\geq x).$$
But $(B_t>x)\subset (M_t\geq x)$, so 
$$\PR^0(M_t\geq x,B_t> x)=\PR^0(B_t> x)=\PR^0(B_t\geq x)$$ 
by continuity of the distribution.  Adding these two results we find that
$$\PR^0(M_t\geq x)=2\PR^0(B_t\geq x)$$
So the distributions of $M_t$ and $|B_t|$ are the same: $M_t \ed |B_t|$.

Now recall that $\PR^0(M_t\geq x) = \PR^0(T_x\leq t )$ so
\begin{eqnarray*}
\PR^0(T_x\leq t)&=& \PR^0(|B_t|\geq x) = \PR^0(\sqrt{t} |B_1|\geq x)\\
 &=& \PR^0(B^2_1\geq \frac{x^2}{t}) = \PR^0(\frac{x^2}{B^2_1} \leq t)
\end{eqnarray*}
So $T_x \ed \frac{x^2}{B^2_1}$.
As a check, this implies $T_x \ed x^2 T_1$, which is explained by Brownian scaling.

The joint distribution of the minimum, maximum and final value of $B$ on an interval can be obtained by
repeated reflections. See e.g. \cite{MR1700749} 
and  \cite[\S 4.1]{MR1848256} 
for related results involving the extremes of Brownian bridge and excursion.
See also \cite{borsal02} for corresponding results up to various random times.

\subsection{Occupation times and local times}

For $f(x) = 1(x \in A )$ for a Borel set $A$ the integral
\eq \lb{occT}
\int_0^T f(B_s) ds
\en
represents the amount of time that the Brownian path has spent
in $A$ up to time $T$, which might be either fixed or random.
As $f$ varies, this integral functional defines a random measure on
the range of the path of $B$, the \emph{random occupation measure}
of $B$ on $[0,T]$.
A basic technique for finding the distribution of the integral functional
\re{occT} is provided by the method of Feynman-Kac, which is discussed in
most textbooks on Brownian motion.
Few explicit formulas are known, except in dimension one.
A well known application of the Feynman-Kac formula is Kac's derivation of
L\'evy 's arcsine law for $B$ a BM$(\reals)$, that is for all fixed times
$T$
\eq
\lb{eq:arcsine}
\PR^0 \left( {1 \over T } \int_0^T 1(B_s >0 ) ds \le u \right)=\frac{2}{\pi}\arcsin(\sqrt{u}) \qquad (0 \le u \le 1).
\en
See for instance 
\cite{MR2604525} 
for a recent account of this approach, and
Watanabe \cite{MR1335470} 
for various generalizations to one-dimensional diffusion processes and random walks.
Other generalizations of L\'evy's arcsine law for occupation times were developed by Lamperti
\cite{MR0094863} 
and Barlow, Pitman and Yor \cite{MR1022918},  
\cite{MR1478738}. 
See also \cite{MR1292188} 
\cite{MR1671824}. 
See Bingham and Doney \cite{MR929510} and
Desbois \cite{MR2305165} 
regarding higher-dimensional analogues of the arc-sine law,
and  Desbois \cite{MR1947234} 
\cite{MR2525259} 
for occupation times for Brownian motion on a graph.

It was shown by Trotter 
\cite{MR0096311} 
that almost surely the random occupation measure induced by
the sample path of a one dimensional Brownian motion $B = (B_t, t \ge 0 )$ admits 
a jointly continuous local time process $(L_t^x(B); x \in \reals, t \ge 0)$ 
satisfying the {\em occupation density formula}
\eq
\lb{locdef}
\int_0^t f( B_s ) ds = \int_{-\infty}^\infty L_t^x(B) f(x) dx .
\en
See \cite{mck75,kni81,ry99} for proofs of this. 
Immediately from \re{locdef} there is the almost sure approximation
\eq
\lb{loc1}
L_t^x = \lim_{\epsilon \te 0 } {1 \over 2 \epsilon} \int_0^t 1( |B_s - x | \le \epsilon)
\en
which was used by \Lev\ to define the process $(L_t^x, t \ge 0 )$ for each fixed
$x$.  Other such approximations, also due to \Lev, are
\eq
\lb{loc2}
L_t^x = \lim_{\epsilon \te 0 } \epsilon D[x,x + \epsilon,B,t]
\en
where $D[x,x + \epsilon,B,t]$ is the number of downcrossings of the interval
$[x,x + \epsilon]$ by $B$ up to time $t$, and
\eq
\lb{loc3}
L_t^x = \lim_{\epsilon \te 0 } \sqrt{ \frac{ \pi \epsilon }{2 } } N[x,\epsilon,B,t]
\en
where $N[x,\epsilon,B,t]$ is derived from the 
random closed level set $\ZZ_x : = \{s : B_s = x \}$
as the number component intervals of 
$[0,t] \setminus \ZZ_x $ whose length exceeds $\epsilon$.
See \cite[Prop. XII.(2.9)]{ry99}.
According to Taylor and Wendel \cite{MR0210196} 
and Perkins \cite{MR639146}, 
the local time $L_t^x $ is also the random Hausdorff $\ell$-measure of $\ZZ_x \cap [0,t]$ for
$\ell(v) = ( 2 v |\log |\log v ||)^{1/2}$.

\subsubsection{Reflecting Brownian motion}
For a one-dimensional BM $B$, let $\underl{B}_t:= \inf_{0 \le s \le t } B_s$.
\Lev\ showed that
\eq
\lb{levyed}
( B - \underl{B} , - \underl{B} ) \ed (|B|, L)
\en
where $L:= (L_t^0, t \ge 0)$ is the local time process of $B$ at $0$.
This basic identity in law of processes leads a large number of identities in distribution between
various functionals of Brownian motion.
For instance if $G_T$ is the time of the last $0$ of $B$ on $[0,T]$, and
$A_T$ is the time of the last minimum of $B$ on $[0,T]$, (or the time of the last maximum),
then  $G_T \ed A_T$. The distribution of $G_T/T$ and $A_T/T$ is the same for all fixed times $T$, by Brownian scaling,
and given by L\'evy's {\em arcsine law} displayed later in \re{eq:arcsine}.
For further applications of L\'evy's identity see \cite{dsy2000}. 

\subsubsection{The Ray-Knight theorems}

This subsection is an abbreviated form the account of the Ray-Knight theorems in \cite[\S 8]{csp}.
Throughout this section let $R$ denote a reflecting Brownian motion on
$[0,\infty)$, which according to L\'evy's theorem \re{levyed}
may be constructed from a standard Brownian motion $B$ either as $R= |B|$, or as $R = B- \underl{B}$.
Note that if $R = |B|$ then for $v \ge 0$ the occupation density of $R$ at level $v$ up to time $t$ is
\eq
\lb{twos}
L_t^v (R)  = L_t^v(B) + L_t^{-v}(B)
\en
and in particular $L_t^0 (R)  = 2 L_t^0(B)$.
For $\ell \ge 0$ let
\eq
\lb{telldef}
\tell:= \inf\{t : L_t^0(R) > \ell \} = \inf\{t : L_t^0(B) > \ell/2 \}.
\en
For $0 \le v < w$ let
$$
D(v,w, t) := \mbox{number of downcrossings of $[v, w]$ by $R$ before $t$}
$$
Then there is the following basic description of the
process counting downcrossings of intervals up to an inverse
local time \cite{NP89b}. 
See also \cite{wal78d} for more about Brownian downcrossings and their
relation to the Ray-Knight theorems.

The process
$$
(D( v, v + \eps ,  \tell ), v \ge 0)
$$
is a time-homogeneous Markovian birth and death process on $\{0,1, 2 \ldots \}$,
with state $0$ absorbing, transition rates
$$
n-1 ~ \stackrel{\,{n\over \eps}}{\longleftarrow} ~
~n ~\stackrel{n \over \eps}{\longrightarrow}~ n+1
$$
for $n = 1,2, \ldots$, and initial state $D( 0, \eps ,  \tell )$ which has
Poisson$(\ell /(2 \eps))$ distribution.

In more detail, the number
$D( v, v + \eps ,  \tell )$ is the number of branches at level $v$ in
a critical binary $(0,\eps)$ branching process started with a
Poisson$(\ell /(2\eps))$ number of initial individuals.
From the Poisson distribution of
$D( 0, \eps ,  \tell )$, and the law of large numbers,
$$
\lim_{ \eps \downarrow 0 } \eps D(0,\eps , \tell ) = \ell  ~~~~~~~~~~~\mbox{ almost surely }
$$
and similarly, for each $v >0$ and $\ell >0$,
by consideration of excursions of $R$ away from level $v$
$$
\lim_{ \eps \downarrow 0 } 2 \eps D(v, v + \eps , \tell ) =  L^v_{ \tell}(R) ~~~~~~~~~~~\mbox{ almost surely.}
$$
This process
$(2 \eps D(v, v + \eps , \tell ), v \ge 0)$, which
serves as an approximation to $(L^v_{ \tell}(R),v \ge 0)$,
is a Markov chain whose state space is the set of integer multiples of $2 \eps$, with
transition rates
$$
x - 2 \eps  ~
\stackrel{\,\,{x \over 2 \eps^2}}{\longleftarrow}
 ~ x ~
\stackrel{{x \over 2 \eps^2}}{\longrightarrow}
~
x + 2 \eps  
$$
for $x = 2 \eps n >0$.
The generator $G_\eps$ of this Markov chain
acts on smooth functions $f$ on $(0,\infty)$ according to

$$
(G_\eps f )(x) = 
{ x \over 2 \eps^2} f(x - 2 \eps ) +
{ x \over 2 \eps^2} f(x + 2 \eps ) 
- { x \over  \eps^2} f(x  ) 
$$
$$
~~~~~~~~~~~~~= 4 x  \, { 1 \over (2 \eps)^2 } \left[ \hf f(x - 2 \eps ) + \hf f( x + 2 \eps ) - f (x )\right]
$$
$$
\te 4 x \, {1 \over 2 } {d ^2 \over d x^2 } f  \mbox{   as } \eps \te 0.~~~~~~~~~~
$$
Hence, appealing to a suitable approximation of
diffusions by Markov chains \cite{kush74,kp91}, we obtain the following
{\em Ray-Knight theorem} (Ray \cite{ray63}, Knight \cite{kt63}):

For each fixed $\ell >0$,  and $\tell:= \inf \{t : L_t^0(R) > \ell \}$,
where $R = |B|$,
\eq
\lb{rko}
(L_{\tell}^v(R), v \ge 0 )  \ed
( \QX_{\ell,v}^{(0)}, v \ge 0  )
\en
where $( \QX_{\ell,v}^{(\delta)}, v \ge 0  )$ for $\delta \ge 0$ denotes
a squared Bessel process of dimension $\delta$ started at $\ell \ge 0$,
as in Section \ref{sec:bes}.
Moreover, if $T_\ell:= \tau_{2 \ell}:= \inf \{t > 0 : L_t^0 (B) = \ell \}$,
the the processes $(L_{T_\ell}^v(B), v \ge 0 )$ and
$(L_{T_\ell}^{-v}(B), v \ge 0 )$ are two independent copies of
$( \QX_{\ell,v}^{(0)}, v \ge 0  )$.
The squared Bessel processes and their bridges, especially for $\delta = 0,2,4$,
are involved in the description of the local time processes of numerous
Brownian path fragments \cite{kt63,ray63,wil74,py82}.
For instance, if $T_1:= \inf \{t : B_t = 1 \}$, then according to
Ray and Knight
\eq
\lb{rk2}
(L_{T_1}^v(B), 0 \le v \le 1 )
\ed (\QX_{0, 1-v}^{(2)}, 0 \le v \le 1 ) .
\en
Many proofs, variations  and extensions of these basic Ray-Knight theorems can be
found in the literature.  See for instance
\cite{kw71,ry99,jp95cyc,wal78d,jeulin83rk} and papers cited there.
The appearance of squared Bessel processes processes embedded in the local times
of Brownian motion is best understood in terms of the construction of
these processes as weak limits of Galton-Watson branching processes\index{Galton-Watson branching process!limit}
with immigration, and their consequent interpretation as continuous state
branching processes with immigration \cite{kw71}.
For instance, there is the following expression of
the \Lev-\Ito\ representation of squared Bessel processes,
and its interpretation in terms of Brownian excursions \cite{py82}, due to
Le Gall-Yor \cite{ly86c}:
For $R$ a reflecting Brownian motion on $[0,\infty)$, with $R_0 = 0$,
let
$$
Y^{(\delta)}_t := R_t + L_{t}^0(R)/\delta ~~~~(t \ge 0) .
$$
Then for $\delta >0$ the process of ultimate local times of
$Y^{(\delta)}$ is a squared Bessel process of dimension $\delta$ started at $0$:
\eq
\lb{legy}
(L_{\infty}^v (Y^{(\delta)}), v \ge 0 ) \ed (\QX_{0,v}^{(\delta)}, v \ge 0 ) .
\en
See \cite[\S 8]{csp} for further discussion of these results and their explanation in terms
of random trees embedded in Brownian excursions.

\subsection{Additive functionals}

A process $(F_t, t \ge 0)$ derived from the path of a process $X$ is called an
{\em additive functional} if for all $s,t \ge 0$
$$
F_{s+t} ( X_u, u \ge 0 )  = F_s ( X_u, u \ge 0 ) + F_t ( X_{s + u}, u \ge 0 ) 
$$
almost surely. Basic additive functionals of any process $X$ are the
integrals
\eq \lb{intf}
F_t = \int_0^t f(X_s) ds
\en
for suitable $f$. For each $x \in \reals$, the local time process  $(L_t^x, t \ge 0 )$
is an additive functional of a one-dimensional Brownian motion $B$. McKean and Tanaka \cite{MR0131295}
showed that for $X = B$ a one-dimensional Brownian motion,
every continuous additive functional of locally bounded variation can be represented as
$$
F_t = \int \mu(dx) L_t^x
$$
for some signed Radon measure $\mu$ on $\reals$. According to the occupation density formula \re{locdef},
the case $\mu(dx) = f(x) dx$ reduces to \re{intf} for $X = B$.
For a $d$-dimensional Brownian motion with $d \ge 2$ there is no such representation in terms
of local times. However each additive functional of bounded variation
can be associated with a signed measures on the state space, called  its \emph{Revuz measure} 
\cite{MR0279890} \cite{MR0281261}
\cite{MR958650} 
\cite{MR2454468}. 
Another kind of additive functional is obtained from the stochastic integral
$$
G_t = \int_0^t g(B_s) \cdot dB_s
$$
where the integrand is a function of $B_s$.
Such martingale and local martingale additive functionals were studied by Ventcel' \cite{MR0139201} for Brownian motion and by
Motoo, Kunita and Watanabe 
\cite{MR0217856}
\cite{MR0196808} 
for more general Markov processes.  

\subsection{Quadratic functionals}
\lb{sec:quad}
By a \emph{quadratic Brownian functional}, we mean primarily
a functional of the form
$$
\int \mu(ds) B_s^2
$$
for some positive measure $\mu(ds)$ on $\reals_+$.
But it is also of interest to consider the more general
functionals
$$
\int \mu(ds) \left( \int_0^\infty f(s,t) dB_t \right)^2
$$
for $\mu$ and $f$ such that
$$
\int \mu(ds) \int_0^\infty d t \, f^2(s,t) < \infty .
$$
In terms of the Wiener chaos decomposition \re{mg5}, these
functionals belong to $C_0 \bigoplus C_2$. So in full generality,
we use the term \emph{quadratic Brownian functional} to mean any functional
of the form
$$
c + \int_0^\infty d B_s \int_0^s d B_u \phi(s,u)
$$
with $c \in \reals$ and 
$\int_0^\infty d s \int_0^s d u \phi^2 (s,u) < \infty$.
We note that, with the help of Kahunen-Lo\'eve expansions, the
laws of such functionals may be decribed via their characteristic
functions. These may be expanded as infinite products, which can sometimes
be evaluated explicitly in terms of hyperbolic functions or other special
functions.
See e.g.
Neveu \cite{MR0272042} 
Hitsuda \cite{MR39:4935}. 
Perhaps the most famous example is \Lev's 
stochastic area formula
\begin{align}
\nonumber
\ER \left[ \exp \left( i \lambda \int_0^t ( X_s dY_s - Y_s dX_s ) \right) \right] &= \ER \left[ \exp  - \, \left( {\lambda^2 \over 2 } \int_0^t ds ( X_s ^2 + Y_s ^2 ) \right) \right] \\
\lb{levarea} 
&= { 1 \over \cosh ( \lambda t ) }
\end{align}
where $X$ and $Y$ are two independent standard BMs.
See 
\Lev\cite{MR0044774}  
Gaveau   \cite{MR0461589} 
Berthuet \cite{MR859843}
Biane-Yor \cite{MR898500} 
for many variations of this formula, some of which are reviewed in
Yor \cite{MR1193919}. 

A number of noteworthy identities in law between quadratic Brownian
functionals are consequences of the following elementary observation:
$$
\int_0^\infty ds \left( \int_0 ^\infty f(s,t) d B_t \right)^2 
\ed
\int_0^\infty ds \left( \int_0 ^\infty f(t,s) d B_t \right)^2 
$$
for $f \in L^2 ( \reals_+ ^2 ; ds \, dt )$.
Consequences of this observation include the following identity, which
was discovered by chemists studying the radius of gyration of random
polymers 
$$
\int_0^1 ds \, \left( B_s - \int_0^1 du B_u \right)^2 \ed \int_0^1 ds \, ( B_s - s B_1 )^2
$$
where the left side involves centering at mean value of the Brownian
path on $[0,1]$, while the right side involves a Brownian bridge.
The right side is known in empirical process theory to describe the
asymptotic distribution of the von Mises statistic 
\cite{sw86}. 
More generally
Yor \cite{MR1118934} 
explains how the Cieselski-Taylor identities, which relate the laws of occupation times and hitting times of Brownian motion in
various dimensions, may be understood in terms of such identities in law between two quadratic Brownian functionals.
See also  \cite{MR1159295} and   
\cite[Ch. 4]{MR2454984}. 

\subsection{Exponential functionals}

Some references on this topic are
\cite{MR1854494} 
\cite{MR2203675} 
\cite{MR2203676} 
\cite{MR2082657} 
\cite{MR1987321}. 
		
\section{Path decompositions and excursion theory}

A basic technique in the analysis of Brownian functionals, especially
additive functionals, is to decompose the Brownian path into various
fragments, and to express the functional of interest in terms of these path
fragments. Application of this technique demands an adequate description
of the joint distribution of the {\em pre-$\rho$} and
{\em post-$\rho$} fragments
$$
(B_t, 0 \le t \le \rho)  \mbox{ and } (B_{\rho+s} , 0 \le s < \infty)  
$$
for various random times $\rho$.
If $\rho$ is a stopping time, then according to the strong Markov property
these two fragments are conditionally independent given $B_\rho$, and the
post $\rho$ process is a Brownian motion with random initial state $B_\rho$.
But the strong Markov property says nothing about the distribution of the
pre-$\rho$ fragment.
More generally, it is of interest to consider decompositions of the
Brownian path into three or more fragments defined by cutting at two or
more random times. 

\subsection{Brownian bridge, meander and excursion}
\label{sec:brmeex}

To facilitate description of the random path fragment of
random length, the following notation is very convenient.
For a process $X:= (X_t, t \in J)$ parameterized by an interval $J$,
and $I = [G_I,D_I]$ a random subinterval of $J$ with length 
$\len{I} := D_I - G_I > 0$, we denote by
$\xomega[I]$ or $\xomega[G_I,D_I]$ the {\em fragment of $\xomega$  on $I$},
that is the process
\eq
\lb{shifty}
\xomega[I] _ u := \xomega _ {G_I + u } ~~~~~~~~~~~~~~~(0 \le u \le \len{I}) .
\en
We denote by $\xs [I]$ or $\xs [G_I,D_I]$
the {\em standardized fragment of $X$ on $I$},
defined by the {\em Brownian scaling operation}
\eq
\lb{bscale}
\xs [I]_u 
:=
\frac{ X _{G_I+ u \length{I}} - X_{G_I} }{\sqrt{\length{I} }} 
~~~~~~~( \ 0 \leq u \leq 1).  
\en
Note that the fundamental invariance of Brownian motion under
Brownian scaling can be stated in this notation as
$$
B_*[0,T] \ed B[0,1]           
$$
for each fixed time $T >0$.
Let $G_{T} := \sup \{ s : s \le T, B_s = 0 \}$ be the
last zero of $B$ before time $T$ and 
$D_{T} := \inf \{ s : s  > T , B_s = 0 \}$ be the first zero of
$B$ after time $T$. 
Let $|B|:= (|B_t|, t \ge 0)$, called {\em reflecting Brownian motion}.
It is well known 
\cite{im65,chu76,ry99} that 
there are the following identities in distribution derived by 
Brownian scaling: for each fixed $T>0$ 
\eq
\lb{bm}
B_*[0,G_T] \ed \Bbb
\en
where $\Bbb$ is a \emph{standard  Brownian bridge},
\eq
\lb{meander}
|B|_*[G_T,T] 
\ed \Bme
\en
where $\Bme$ is a \emph{standard  Brownian meander}, and
\eq
\lb{excn}
|B|_*[G_T,D_T] \ed \Bex.  
\en
where $\Bex$ is a \emph{standard  Brownian excursion}.
These identities in distribution provide a convenient unified definition of the
standard bridge, meander and excursion, which arise also as limits in
distribution of conditioned random walks, as discussed in Section \ref{sec:rw}.
It is also known that $\bb, \bme $ and $\bex$ can be constructed 
by various other operations on the paths of $B$, and
transformed from one to another by further operations \cite{bp92}.

\paragraph{The excursion straddling a fixed time}

For each fixed $T >0$, the path of $B$ on $[0,T]$ can be reconstructed in
an obvious way from the four random elements
$$
G_T, ~B_*[0,G_T], ~|B|_*[G_T,1],  ~\mbox{sign}(B_T)
$$
which are independent, the first with distribution
$$
\PR( G_T/T \in du ) = \frac{ du  }{ \pi \sqrt{u (1-u) } }  ~~~~( 0 < u < 1 )
$$
which is one of \Lev's arc-sine laws, the next a standard bridge, the
next a standard meander, and the last a uniform random sign $\pm \hf$.
Similarly, the path of $B$ on $[0,D_T]$ can be reconstructed 
from the four random elements
$$
(G_T, D_T ), ~B_*[0,G_T], ~|B|_*[G_T,D_T],  ~\mbox{sign}(B_T)
$$
which are independent, with the joint law of $(G_T, D_T)$ given by
$$
\PR( G_T /T \in du, D_T /T \in dv ) =  \frac{ du \,  dv   } { 2 \pi u^{1/2} (v-u)^{3/2}  } ~~~~( 0 < u < 1 < v  ),
$$
with $B_*[0,G_T]$
a standard bridge, $|B|_*[G_T,D_T]$ a standard excursion, and $\mbox{sign}(B_T)$ a uniform random sign $\pm \hf$.
See \cite[Chapter 7]{MR3134857}. 

For $0 < t < \infty$ let $\bbrt$ be a 
{\em Brownian bridge of length $t$}, which may be regarded as a random 
element of $C[0,t]$ or of $C[0,\infty]$, as convenient:
\eq
\lb{bbrtdef}
\bbrt(s):= \sqrt{t} \bb( (s/t) \wedge 1 )  ~~~~~(s \ge 0).
\en
Let $\bmet$ denote a {\em Brownian meander of length} $t$,
and
$\bext$ be a {\em Brownian excursion of length} $t$,
defined similarly to \re{bbrtdef} with $\bme$ or $\bex$ instead of
$\bb$.


\paragraph{Brownian excursions and the three-dimensional Bessel process\index{Three-dimensional Bessel process}}

There is a close connection between Brownian 
excursions and a particular time-homogeneous diffusion process $R_3$ on
$[0,\infty)$, commonly known as the 
{\em three-dimensional Bessel process} BES(3), due to the
representation
\eq
\lb{bestrep}
(R_3(t), t \ge 0 ) \ed 
\left( \sqrt{ \sum_{i = 1}^3 (B_i(t))^2 }, t \ge 0 \right)
\en
where the $B_i$ are three independent standard Brownian motions.
It should be understood however that this particular representation 
of $R_3$ is a relatively unimportant coincidence in distribution.
What is more important, and can be understood entirely in terms of
the random walk approximations of Brownian motion and Brownian excursion \re{btconv} and \re{donskex}, 
is that there exists a 
time-homogeneous diffusion process $R_3$ on
$[0,\infty)$ with $R_3(0) = 0$, which has the same self-similarity
property as $B$, meaning invariance under Brownian scaling, and which
can be characterized in various ways, including \re{bestrep}, but
most importantly as a Doob $h$-transform of Brownian motion.

For each fixed $t >0$, the Brownian excursion $\bext$ of length $t$
is the BES(3) bridge from $0$ to $0$ over time $t$, meaning that
$$
(\bext(s), 0 \le s \le t ) \ed (R_3(s), 0 \le s \le t \giv R_3(t) = 0).
$$
Moreover, as $t \te \infty$
\eq
\lb{exinf}
\bext \convd R_3, 
\en
and $R_3$ can be characterized in two other ways as follows:

\begin{itemize}
\item[(i)] \cite{mckean63,wil74}
The process $R_3$ is a Brownian motion on $[0,\infty)$ started at $0$
and conditioned never to return to $0$, as defined by the Doob $h$-transform,
for the harmonic function $h(x) =x$ of Brownian motion on $[0,\infty)$,
with absorbtion at $0$. 
That is, $R_3$ has continuous paths starting 
at $0$, and for each $0 < a < b$ the stretch of $R_3$ between when it 
first hits $a$ and first hits $b$ is distributed like $B$ with $B_0 = a$ 
conditioned to hit $b$ before $0$.

\item[(ii)] \cite{p75} 
There is the identity
\eq
\lb{2m-x}
R_3(t) = B(t) - 2 \underl{B}(t) ~~~~~( t \ge 0)
\en
where $B$ is a standard Brownian motion with past minimum process
$$\underl{B}(t):= \underl{B}[0,t] = - \underl{R_3}[t,\infty).$$
\end{itemize}

The identity in distribution \re{2m-x} admits numerous variations
and conditioned forms \cite{p75,bp92,MR1703609}. 
For instance, by application of \Lev's identity \re{levyed}
\eq
\lb{2m-xvar}
(R_3(t), t \ge 0 ) \ed ( |B_t| + L_t, t \ge 0 ) .
\en
where $(L_t, t \ge 0)$ is the local time process of $B$ at $0$.
\subsection{The Brownian zero set}
\newcommand{\Z}{Z}
Consider the {\em zero set} of one-dimensional Brownian motion $B$:
$$\Z(\omega):=\{t:B_t(\omega)=0\}.$$
Since $B$ has continuous paths, $\Z(\omega)$ is closed subset of $[0,\infty)$, 
which depends on $\omega$ through the path of $B$. Intuitively,  
$\Z(\omega)$ is a random closed subset of $[0,\infty)$, and this is
made rigorous by putting an appropriate $\sigma$-field 
on the set of all closed subsets of $[0,\infty)$.
Some almost sure properties of the Brownian zero are:
\begin{itemize}
\item $\Z(\omega)$ has Lebesgue measure equal to $0$;
\item $\Z(\omega)$ has Hausdorff dimension $1/2$;
\item $\Z(\omega)$ has no isolated points;
\item $\Z(\omega)$ is the set of points of increase of the local time process at $0$.
\item $\Z(\omega)$ is the closure of the range of the inverse local time process, which is a stable subordinator
of index $1/2$.
\end{itemize}
See \cite{MR1434129} for a study of the distribution of ranked lengths of component open intervals
of $(0,t) \setminus \Z(\omega)$, and generalizations to a stable subordinator of index $\alpha \in (0,1)$.


\subsection{\Lev-\Ito\ theory of Brownian excursions}
\index{Brownian excursion}
The \Lev-\Ito\ excursion theory allows the Brownian path to be reconstructed
from its random zero set, an ensemble of independent standard Brownian excursions,
and a collection of independent random signs, one for each excursion.
The zero set can first be created as the closed range of 
a stable subordinator $(T_\ell, \ell \ge 0)$ which ends up being the inverse
local time process of $B$. Then for each $\ell$ such that $T_{\ell-} < T_\ell$
the path of $B$ on $[T_{\ell-} , T_\ell]$ can be recreated by shifting and
scaling a standard Brownian excursion to start at time $T_{\ell-}$ and end at time $T_\ell$.

Due to \re{levyed}, the process of excursions of $|B|$ away from $0$ is
equivalent in distribution to the process of excursions of $B$ above 
$\underl{B}$.  
According to the \Lev-\Ito\ description of this process,
if $I_\ell:= [T_{\ell-}, T_\ell]$ for $T_{\ell}:= \inf \{t : B(t) < - \ell\}$,
the points
\eq
\lb{ppp1}
\{ ( \ell, \mu(I_\ell), ( B - \underl{B} )[I_\ell] ): \ell > 0, \mu( I_\ell) > 0 \},
\en
where $\mu$ is Lebesgue measure, are the points of 
a Poisson point process on $\realsp \times \realsp \times C[0,\infty)$ with intensity
\eq
\lb{ppp2}
d \ell  \, {dt \over \sqrt{2 \pi } \, t^{3/2} } \, \PR( \bext \in d \omega ) .
\en
On the other hand, according to Williams \cite{wil79v1},
if $M_\ell := \overline{B}[I_\ell] - \underline{B}[I_\ell]$ is the maximum
height of the excursion of $B$ over $\underl{B}$ on the interval $I_\ell$,
the points
\eq
\lb{ppp1w}
\{ ( \ell, M_\ell, ( B - \underl{B} )[I_\ell] ): \ell > 0, \mu( I_\ell) > 0 \},
\en
are the points of a Poisson point process on 
$\realsp \times \realsp \times C[0,\infty)$ with intensity
\eq
\lb{ppp2w}
d \ell  \, {dm \over m^2} \, \PR( \bexm \in d \omega ) 
\en
where $\bexm$ is a {\em Brownian excursion conditioned to have maximum $m$}.
That is to say $\bexm$ is a process $X$ with $X(0) = 0$ 
such that 
for each $m >0$,
and $H_x(X):= \inf \{t: t > 0 , X(t) = x\}$,
the processes $X[0, H_m(X)]$
and $m - X[H_m(X),H_0(X)]$ are two independent copies of
$R_3[0, H_m(R_3)]$, and $X$ is stopped at $0$ at time $H_0(X)$.
{\em \Ito's law of Brownian excursions}
is the $\sigma$-finite measure $\nu$ on $C[0,\infty)$ 
which can be presented in two different ways according to \re{ppp2} and
\re{ppp2w} as
\eq
\lb{itolaw}
\nu(\cdot) = \int_0^\infty 
{dt \over \sqrt{2 \pi } t^{3/2} } \, \PR( \bext \in \cdot )
= 
\int_0^\infty 
{dm \over m^2} \, \PR( \bexm \in \cdot ) 
\en
where the first expression is a disintegration according to the
lifetime of the excursion, and the second according to its maximum.
The identity \re{itolaw} has a number of interesting applications
and generalizations \cite{bpy99z,py95agr,MR1675059}.  
See 
\cite[Ch. XII]{ry99}, 
\cite{MR2295611}  
and \cite{MR3134857} 
for more detailed accounts of It\^o's excursion theory and its applications.
\notes
See \cite{rp81,lg.will,bert92a,ry99,hmo01}
for different approaches to the basic path transformation \re{2m-x} from 
$B$ to $R_3$, its discrete analogs, and various extensions.
In terms of $X:= -B$ and $M:= \overline{X} = - \underline{B}$,
the transformation takes $X$ to $2M-X$.
For a generalization to exponential functionals, see Matsumoto and Yor
\cite{matyor99c}.
This is also discussed in \cite{oy01}, where an alternative
proof is given using reversibility and symmetry arguments,
with an application to a certain directed polymer problem.
A multidimensional extension 
is presented in \cite{oy02}, where a representation for
Brownian motion conditioned never to exit a (type A)
Weyl chamber is obtained using reversibility and symmetry
properties of certain queueing networks.  See also
\cite{oy01,kor02} and the survey paper \cite{oc.survey}.
This representation theorem is closely connected to
random matrices, Young tableaux, the Robinson-Schensted-Knuth correspondence,
and symmetric functions theory \cite{oc.rsk,oc.rsk1}.
A similar representation theorem has been obtained in \cite{bj}
in a more general symmetric spaces context, using quite
different methods.  
These multidimensional versions of the transformation from
$X$ to $2M-X$ are intimately connected with combinatorial representation
theory and Littelmann's path model \cite{littelman95}.



\section{Planar Brownian motion}
\subsection{Conformal invariance}
\Lev\  showed that if
$$
Z_t := B_t^{(1)} + i B_t^{(2)}
$$
is a $2$-dimensional Brownian motion, regarded here as a $\complex$-valued
process, and
$f: \complex \to \complex $ is a non-constant holomorphic function,
then
\eq
\lb{dagger}
f(Z_t) = \widehat{Z} \left( \int_0^t | f'(Z_s)|^2 ds  \right)
\en
where $(\widehat{Z}(u), u \ge 0 )$ is another $\complex$-valued Brownian
motion.
This is an instance of the Dambis-Dubins-Schwarz result of Section \ref{sec:chtime} in two dimensions.
For 
$$
f(Z_t) = F_t^{(1)} + i F_t^{(2)}
$$
for real-valued processes $F^{(1)}$ and $F^{(2)}$
which are two local martingales relative to the filtration of
$(Z_t)$, with
$$
\langle  F^{(1)} \rangle_ t = \langle F^{(2)} \rangle_ t  \mbox{  and  }
\langle F^{(1)}, F^{(2)} \rangle_ t  \equiv 0
$$
as a consequence of \Ito's formula and the Cauchy-Riemann equations for $f$.

More generally, Getoor and Sharpe 
\cite{MR0305473} 
defined a 
{\em conformal martingale} to be any $\complex$-valued continuous local
martingale $(Z_t:= X_t + i Y_t, t \ge 0 )$ for real-valued $X$ and $Y$
such that
$$
\langle X \rangle_ t = \langle Y \rangle_ t   \mbox{    and    } \langle X,Y \rangle_ t \equiv 0.
$$
They showed in this setting that
$$
Z_t = \widehat{Z}_{\langle X \rangle_ t}    ~~~~~~(t \ge 0 )
$$
where $(\widehat{Z}_{u}, u \ge 0 )$ is a $\complex$-valued Brownian motion.
Note that conformal martingales are stable by composition with an entire
holomorphic function, and also by continuous time-changes.
See \cite{MR87a:60054}, 
\cite{MR80j:60105}   
\cite{MR2604525} 
for many applications.
\subsection{Polarity of points, and windings}
According to \Lev's theorem,
\eq \lb{expl}
\exp( Z_t ) = \widehat{Z} \left( \int_0^t ds \exp (2 X_s)\right)
\en
with $( \widehat{Z}(u), u \ge 0 )$ another planar BM started at $1$,
and $X_s$ the real part of $Z_s$.
It follows immediately from \re{expl} that $\widehat{Z}$ will never
visit $0$ almost surely. From this, it follows easily that for
two arbitrary points $z_0$ and $z_1$ with $z_0 \ne z_1$
$$
\PR^{z_0}( Z_t = z_1 \mbox{ for some } t \ge 0 ) = 0 .
$$
In particular, the winding number process, that is a continuous
determination $(\theta_t^{(z_1)}, t \ge 0 )$ of the argument of $Z_t - z_1$ along the path of
$Z$, is almost surely well defined for all $t \ge 0$, and
so is the corresponding complex logarithm of $Z_t - z_1$, according to
the formula
\eq \lb{wind}
\log ( | Z_t - z_1 |)  + i \theta_t^{(z_1)} :=  \int_0^t { d Z_u \over Z_u - z_1 }
= \tilde{Z} \left( \int_0^t { du \over | Z_u - z_1 |^2 } \right)
\en
for $t \ge 0$, where, by another application of \Lev's theorem, the
process $\tilde{Z}$ is a complex Brownian motion starting at $0$.
Moreover, from the trivial identity
$$
Z_t - z_1 = (z_0 - z_1)   + \int_0 ^t { dZ_u \over (Z_u - z_1 ) } ( Z_u - z _1)
$$
considered as a linear integral equation in $(Z_u - z_1)$,
we see that
$$
Z_t - z_1 = (z_0 - z_1 )  \cdot  \exp \left( \int_0 ^t { dZ_u \over (Z_u - z_1 ) } \right).
$$
Taking $z_1 = 0$, this yields the \emph{skew-product representation} of the planar BM
$Z$ started from $z_0 \ne 0$:
\eq\lb{skew}
Z_t = |Z_t| \, \exp \left[ i \gamma \left(     \int_0^t {ds \over |Z_s|^2 } \right) \right]
\en
for $t \ge 0$, where $(\gamma(u), u \ge 0 )$ is a \oned\ BM, independent of the
radial part $( |Z_t|, t \ge 0 )$, which is by definition a 2-dimensional
Bessel process.
This skew-product representation reduces a number of problems involving
a planar Brownian motion $Z$ to problems involving just its radial part.


\subsection{Asymptotic laws}
In 1958, the following two basic asymptotic laws of planar Brownian motion
were discovered independently:

\paragraph{Spitzer's law \cite{MR0104296}} 
$$
{2 \theta _t \over \log t } \convd C_1 \mbox{ as } t \to \infty
$$
where $\theta_t$ denotes the winding number of $Z$ around $0$, assuming
$Z_0 \ne 0$, and $C_1$ denotes a standard Cauchy variable.

\paragraph{The Kallianpur-Robbins law \cite{MR0056233}} 
$$
{1 \over (\log t ) ||f||} \int_0^t ds f(Z_s) \convd e_1 
\mbox{ as } t \to \infty
$$
for all non-negative measurable functions $f$ with 
$||f||:= \int\int f(x + i y ) dx dy < \infty$, and $e_1$ a standard exponential
variable, along with the ratio ergodic theorem
$$
{\int_0^t ds f(Z_s) \over \int_0^t ds g(Z_s) } \convas { ||f|| \over ||g|| }
\mbox{ as } t \to \infty
$$
for two such functions $f$ and $g$.
It was shown in \cite{MR841582} 
that the skew product representation of planar  BM allows a unified derivation of these
asymptotic laws, along with descriptions of the joint asymptotic behaviour
of windings about several points and an additive functions
$$
{1 \over \log t } \left( 
\theta_t^{(z_1)}, \ldots,
\theta_t^{(z_k)},  \int_0^t ds f(Z_s) \right) .
$$
For a number of extensions of these results, and a review of literature
around this theme, see 
Yor \cite[Ch. 8]{MR1193919}. 
See also 
Watanabe \cite{MR1834236} 
Franchi  \cite{MR1141243}  
Le Gall and Yor  
\cite{MR946219} 
\cite{MR876259} 
\cite{MR816704} 
for various extensions of these
asymptotic laws: to more general recurrent diffusion processes in the plane,
to Brownian motion on Riemann surfaces, and to windings of Brownian motion
about lines and curves in higher dimensions.
See also \cite[Chapter 7]{yor92z} where some asymptotics are obtained for
the self-linking number of BM in $\reals^3$.
Questions about knotting and entanglement of Brownian paths and random 
walks are studied in \cite{MR81f:60110} 
and \cite{MR98e:82031}. 
Much more about planar Brownian motion can be found in 
Le Gall's course \cite{MR1229519}. 

\subsection{Self-intersections}
In a series of remarkable papers in the 1950's, Dvoretsky, \Erdos,
Kakutani and Taylor established among other things the existence of
multiple points of arbitary (even infinite) order in planar Brownian
paths. It was not until the 1970's that any attempt was made to quantify the
extent of self-intersection of Brownian paths by consideration of
the occupation measure of Brownian increments
\eq
\lb{interloc}
\nu_{s,t}^{(2)}(\omega, dx):= \int_0^s du \,\int_0^t dv \, 1( B_u - B_v \in dx) .
\en
for a planar Brownian motion $B$, and $x \in \reals^2$. Wolpert \cite{MR80a:60104}
and Rosen \cite{MR942041} 
showed that this random measure is 
almost surely absolutely continuous with respect to Lebsesgue measure 
on $\reals^2$, with a density
\eq
\lb{interloc1}
\tilde{\alpha}(x; s,t), x \in \reals^2 - \{0\}, s,t, \ge 0), 
\en
which can be chosen to be jointly continuous  in $(x,s,t)$.
For fixed $x$ this process in $s,t$ is called the 
the process of \emph{intersection local times at} $x$.
However,
$$
\lim_{x \te 0} \tilde{\alpha}(x; s,t) = \infty  \mbox{ almost surely }
$$
reflecting the accumulation of immediate intersections of the Brownian
path with itself coming from times $u,v$ in \re{interloc} with $u$ close to $v$.
A measure of the extent of such self-intersections is obtained by
consideration of
$$
\nu_{s,t}^{(2)}f_n :=
\int_{\reals^2} f_n(x) \nu_{s,t}^{(2)}(\omega, dx) = 
\int_0^s du \,\int_0^t dv f_n( B_u - B_v ) 
$$
as $n \te \infty$ with $f_n(x):= n^2 f(nx)$ for a continuous non-negative
function $f$ with compact support and planar Lebesgue integral equal to $1$.  Varadhan 
\cite{vara69} showed that
$$
\nu_{s,t}^{(2)}f_n  - \ER (\nu_{s,t}^{(2)}f_n  ) \te \gamma_{s,t} \mbox{ as } n \te \infty
$$
in every $L^p$ space for some limit $\gamma_{s,t}$ which is independent of $f$.
Indeed, 
$$
\tilde{\alpha}(x; s,t) - \ER [\tilde{\alpha}(x; s,t) ]
\te \gamma_{s,t} \mbox{ as } x \te 0
$$
in $L^2$.
The limit process $(\gamma_{s,t} , s,t \ge 0 )$ is known to admit a continuous
version in $(s,t)$, called the
\emph{renormalized self-intersection local time}
of planar Brownian motion.
Rosen 
\cite{MR866345} 
obtained variants of Tanaka's formula for these local times of intersection.

These results have been extended in a number of ways. 
In particular,
for each $k >2$ the occupation measure of Brownian increments of order $k-1$
$$
\int_0^{s_1} du_1 \cdots \int_0^{s_k} du_k 
\prod_{i = 2}^k ( B_{u_{i}} - B_{u_{i-1} } \in dx_i )
$$
is absolutely continuous with respect to the Lebesgue measure
$dx_2 \cdots dx_{k}$, and Varadhan's renormalization result extends as follows:
the multiple integrals
$$
\int \cdots \int _{0 \le s_1 < \cdots < s_k \le t } ds_1 \cdots ds_k 
\prod_{i = 2}^k \overline{ f _n( B_{u_{i}} - B_{u_{i-1} } ) } ,
$$
where $\overline{X}:= X - \ER(X)$, converge in $L^p$ for every $ p \ge 1$
as $n \te \infty$, to define a $k$th order renormalized intersection local
time.
Amongst a number of deep applications of these intersection local times,
we mention the asymptotic series expansion of the area of the 
\emph{Wiener sausage}
$$
S_\epsilon(t) :=  \{ y :  | y - B_s | \le \eps  \mbox{ for some } 0 \le s \le t 
\},
$$
according to which for each fixed $n = 1,2, \ldots$, as $\epsilon \te 0$
$$
m( S_\epsilon (t ) ) = \sum_{k = 1}^n {\gamma_k(t) \over ( \log(1/\epsilon) )^k } + o \left( { 1 \over ( \log(1/\epsilon) )^n } \right) 
$$
where $\gamma_k(t)$ is the $k$th order normalized intersection local time.
These results, and much more in the same vein, can be found in the course of
Le Gall \cite{MR94g:60156}.
Subsequent open problems were collected in \cite{MR93k:60205}.

Questions about self-intersection of planar Brownian paths are closely
related to questions about intersections of two or more independent Brownian
paths.  Such questions are easier, because a process 
$L_{t,s}^{(1,2)}$ of local time of intersection at $0$  between
two independent Brownian paths $(B_u^{(1)}, 0 \le u \le t )$ and
$(B_v^{(2)}, 0 \le v \le s )$ is well-defined and finite.
Le Gall's proof of Varadhan's renormalization uses the convergence of a centered
sequence of such local times of intersection $L^{(1,2)}$.

\subsection{Exponents of non-intersection}
Another circle of questions involves estimates of the probability of
non-intersection of independent planar BM's $B^{(1)}$ and $B^{(2)}$.
If $Z_R^{(i)}$ is the range of the path of $B^{(i)}_t$  for
$0 \le t \le \inf \{t : | B_t | = R \}$, and 
$B^{(i)}_0$ is uniformly distributed on the unit circle, then there
exist two universal constants $c_1$ and $c_2$ such that
\eq \lb{exps}
c_1 R^{-5/4} \le \PR ( Z_R^{(1)} \cap Z_R^{(2)}  = \emptyset ) \le c_2 R^{-5/4}.
\en
More generally, for $p$ independent BM's, there is a corresponding
exponent 
$$\xi_p = (4 p^2 - 1)/12 $$
instead of $5/4$, so that
$$
c_{1,p} R^{-\xi_p} 
\le 
\PR \left( \mbox{$\cap_{i = 1}^p Z_R^{(i)}$} = \emptyset \right) 
\le c_{2,p} R^{-\xi_p} 
$$
Another interesting family of exponents, the \emph{non-disconnection exponents}
$\eta_p$ are defined by
$$
c_{1,p}^\prime R^{-\xi_p} 
\le 
\PR \left( {\cal D } ( \mbox{$\cup_{i = 1}^p Z_R^{(i)} )$} \right) 
\le c_{2,p} ^\prime
R^{-\xi_p} 
$$
where ${\cal D }(\Gamma)$ is the event that the random set $\Gamma$
does not disconnect $0$ from $\infty$. It is known that these exponents
exist, and are given by the formula
$$
\eta_p = { (\sqrt{ 2 4 p + 1 } - 1 )^2 - 4 \over 48 } .
$$
In particular, $\eta_1 = 1/4$.
For an account of these results, see 
Werner \cite[p. 165, Theorem 8.5]{MR2079670}. 
These results are the keys to computation of the Hausdorff  dimension $d_H(\Gamma)$ of a number of interesting random
sets $\Gamma$ derived from a planar Brownian path, in particular
\begin{itemize}
\item
the set $C$ of \emph{cut points} $B_t$ for $t$ such that $B[0,t] \cap B[t,1] = \emptyset$ : $$d_H(C) = 2 - \xi_1 =  3/4$$
\item
the set $F$ of \emph{frontier points} 
$B_t$ for $t$ such that ${\cal D }(B[0,1] - B _t)$ :
$$d_H(F) = 2 - \eta_2 = 4/3$$
\item
the set $P$ of \emph{pioneer points} 
$B_t$ such that ${\cal D }(B[0,t] - B_t )$ :
$$d_H(P) = 2 - \eta_1 = 7/4.$$
\end{itemize}
In particular, the second of these evaluations established a famous conjecture
of Mandelbrot.
 For introductions to these ideas and the closely related theory of {\em Schramm-Loewner evolutions} driven by Brownian motion,
see Le Gall's review \cite{MR1886755} of the work of
Kenyon, Lawler and Werner on critical exponents for random walks and 
Brownian motion, the appendix of 
\cite{MR2604525} by Schramm and Werner, and the monograph of 
Lawler \cite{MR2129588} 
on conformally invariant processes in the plane,
\section{Multidimensional BM}
\subsection{Skew product representation}
The skew product representation \re{skew} of planar Brownian motion
can be generalized as follows to a BM $B$ in $\reals ^d$ for $d \ge 2$:
\eq
\lb{skewd}
B_t = |B_t | \theta( \mbox{$\int_0^t ds/|B_s|^2$} ),       ~~~~~~ t \ge 0
\en
where the radial process $(|B_t|, t \ge 0 )$ is a $d$-dimensional Bessel process,
as discussed in Section \ref{sec:bes}, and the angular Brownian motion
$(\theta_u = \theta(u), u \ge 0 )$ is a BM on the sphere $\SR^{d-1}$,
independent of the radial process. This is a particular case of BM on 
a manifold, as discussed further in Section \ref{sec.manifold}.
For now, we just indicate Stroock's representation of the angular motion
$(\theta_u, u \ge 0)$
as the solution of the Stratonovich differential equation
\eq
\lb{stratok}
\theta_u^i = \theta_0^i + \sum_{j = 1}^d \int_0^u ( \delta_{ij} - \theta_s^i \theta_s^j ) \circ dW_s^j,       ~~~~~~(1 \le i \le d)
\en
where $(W_s)$ is a $d$-dimensional BM independent of $(|B_s|)$.
This representation \re{stratok} may be obtained by application of \Ito's
formula to $f(x) = x/|x|$ and Knight's theorem 
of Section \ref{sec:ktthm}
on orthogonal martingales.

\subsection{Transience}
The BM in $\reals^d$ is transient for $d \ge 3$, and the study of its
rate of escape to $\infty$ relies largely on the application of one-dimensional
diffusion theory to the radial process.
See for instance Shiga and Watanabe \cite{sw73}.
According to the theory of last exit decompositions, the \emph{escape process} 
$$ S_r := \sup \{t,  |B_t|  = r \}, ~~~~~~~ r \ge 0 $$
is a process with independent increments, whose distribution
can be described quite explicitly 
\cite{getoor79be}. 
In the special case $d = 3$ this process is the stable process of
index $\hf$, with stationary independent increments, and is identical
in law to $(T_r, r \ge 0)$ where $T_r$ is the first hitting time of $r$
by a one-dimensional Brownian motion $B$. This and other coincidences
involving one-dimensional Brownian motion and the three-dimensional Bessel
process BES(3) are explained by the fact that BES(3) is the Doob 
$h$-transform of BM on $(0,\infty)$ with killing at $0$.

\subsection{Other geometric aspects}

Studies of stochastic  integrals such as
$$
\int_0^t ( B_s^i dB_s^j - B_s^j dB_s^i ) , ~~~~ i \ne j
$$
arise naturally in various contexts, such as Brownian motion on the
Heisenberg group.
See e.g. 
Gaveau   \cite{MR0461589} and 
Berthuet \cite{MR859843}.

\subsubsection{Self-intersections}

Dvoretsky, \Erdos\ and Kakutani \cite{MR0034972} showed that
two independent BM's in $\reals^3$ intersect almost surely, even if started apart.
Consequently,  a single Brownian path in $\reals^3$ has self-intersections almost surely.
An occupation measure of Brownian increments
$\nu_{s,t}^{(3)}$ can be defined exactly as in \re{interloc}, for $B$ with values in
$\reals^3$ instead of $\reals^2$, and this measure admits  a 
density $\tilde{\alpha}(x; s,t), x \in \reals^3 - \{0\}, s,t, \ge 0$ which may be chosen
jointly continuous in $(x,s,t)$. Again $\lim_{x \te 0 } \tilde{\alpha}(x; s,t) = \infty$ almost surely,
but now Varadhan's renormalization phenomenon does not occur. Rather, there is a weaker result.
In agreement with Symanzik's program for quantum field theory, Westwater \cite{MR84k:82074a} showed
weak convergence as $n \te \infty$ of the measures
\eq
\lb{ww}
{\exp \left( - g \nu_{s,t}^{(3)}(f_n) \right) 
\over Z_n^g (s,t) }
\cdot W^{(3)}
\en
where $g >0$, $W^{(3)}$ is the Wiener measure,
$$
\nu_{s,t}^{(3)}(f_n)  = \int_{\reals^3} f_n(x) \nu_{s,t}^{(3)}(\omega; dx)  
= \int_0^s du \int_0^t dv f_n(B_u - B_v)
$$
where $f_n(x):= n^3 f(nx)$ for a continuous non-negative
function $f$ with compact support and Lebesgue integral equal to $1$.  
Westwater showed that as $g$ varies the weak limits $W^{(3,g)_{s,t}}$ are mutually singular,
and that under each $W^{(3,g)_{s,t}}$ the new process, while no longer a BM, still has self-intersections.

For $\delta \ge 4$, two independent BM's in $\reals^\delta$ do not intersect, and consequently BM$(\reals^\delta)$
has no self-intersections. The analog of \re{ww} can nonetheless be studied, with the result that these measures
still have weak limits. 
Some references are
\cite{MR1240717} 
\cite{MR729794} 
\cite{MR3382172} 
\cite{MR968950}.  

\subsection{Multidimensional diffusions}

Basic references on this subject are:

\begbib
\item[\cite{MR81f:60108}]D. ~W. Stroock and S.~R.~S. Varadhan. \newblock {\em Multidimensional diffusion processes} (1979).
%
\item[\cite{MR88d:60203}] D.~W. Stroock. \newblock {\em Lectures on stochastic analysis: diffusion theory} (1987).

\item [\cite{varadhan01}]  S. R. S. Varadhan. {\em Diffusion processes} (2001).  

\item [\cite{watanabe01}]  S. Watanabe. {\em It\^o's stochastic calculus and its applications} (2001). 

\endbib

See also 
\cite[Chapter 5]{ks88},
\cite{oks03}
\cite{bass98deo}.
Basic notions are weak and strong solutions of stochastic differential equations,
and martingale problems.
We refer to Platen \cite{MR2002f:65019} for a survey of literature on
numerical methods for SDE's, including Euler approximations,
stochastic Taylor expansions, multiple It\^o and Stratonovich integrals, 
strong and weak approximation methods, Monte Carlo simulations and
variance reduction techniques for functionals of diffusion processes.
An earlier text in this area is Kloeden and Platen \cite{MR94b:60069}.


\subsection{Matrix-valued diffusions}
Another interesting example of BM in higher dimensional spaces is provided
by matrix-valued BM's, which are of increasing interest in the theory of
random matrices. See for instance O'Connell's survey \cite{oc.survey}.
C\'epa  and L\'epingle  \cite{MR2002j:60180} 
interpret Dyson's model for the eigenvalues of $N \times N$ 
unitary random random matrices as a system of $N$ Brownian interacting
Brownian particles on the circle with electrostatic repulsion. 
They discuss more general particle systems allowing collisions between
particles, and measure-valued limits of such systems as $N \te \infty$.
This relates to the asymptotic theory of random matrices, which concerns 
asymptotic features as $N \te \infty$ of various statistics of
$N\times N$ matrix ensembles.
See also \cite{MR1132135}. 
Another interesting development from random matrix theory is the theory of
Dyson's Brownian motions
\cite{MR914993} 
\cite{MR2299928}. 

\subsection{Boundaries}

\subsubsection{Absorbtion}
It is natural in many contexts to consider Brownian motion and diffusions
in some connected open subset $D$ of $\reals^\deltaN$. The simplest such process
is obtained \emph{absorbing} $B$ when it first reaches the
boundary at time $T_D = \inf \{t \ge 0 : B_t \notin D \}$.
This gives a Markov process $X$,
with state space the closure of $D$,
defined by
$X_t = B_{t \wedge T_D }$.
A key connection with classical analysis
is provided by considering the density of the mean occupation measure of the killed BM: for all 
non-negative Borel measurable functions $f$ vanishing off $D$
\eq \lb{greenD}
\ER^x \int_0^\infty f(X_t) dt = \ER^x \int_0^{T_D} f(B_t) dt  = \int_D f(y) g_D(x,y) dy
\en
where $g_D(x,y)$ is the classical \emph{Green function} associated with the domain $D$.
A classical fact, not particularly obvious from \re{greenD}, is that $g_D$ is a symmetric
function of $(x,y)$.
See 
M\"orters and Peres \cite[\S 3.3]{MR2604525} 
and Chung \cite{chung02} for further discussion.

\subsubsection{Reflection}
If $H$ is the half-space of $\reals^\deltaN$ on one side of a hyperplane, there is an obvious
way to create a process $R$ with continuous paths in the closure of $H$ by reflection through
the hyperplane of a Brownian motion in $\reals^\deltaN$. By Skorokhod's analysis of reflecting 
Brownian motion on $[0,\infty)$ in the one-dimensional case, this reflecting Brownian motion $R$ 
in $H$ can be described as a semi-martingale, which is the sum of a Brownian motion and
process of bounded variation which adds a push only when $R$ is on the boundary hyperplane,
in a direction normal to the hyperplane, and of precisely the right magnitude to keep $R$ on
one side of the hyperplane.
This semi-martingale description has been generalized to characterize reflecting BM 
in a convex polytope bounded by any finite number of hyperplanes, and further to domains with
smooth boundaries.
See for instance
\cite{MR745330} 
\cite{MR1648166} 
\cite{MR2308342} 
\cite{MR1714981} 
\cite[Ch. 5]{MR3184496} 
A basic property of such reflecting Brownian motions $R$ is that in great generality 
Lebesgue measure on the domain is the unique invariant measure.
In particular, if $D$ is compact, as $t \te \infty$ the limiting distribution 
of $R_t$ is uniform  on $D$.
More complex boundary behaviour is possible, and of interest in applications
of reflecting BM to queuing theory 
\cite{MR1648166} 
and the study of Schramm-Loewner evolutions
\cite[Appendix C]{MR2129588}. 

\subsubsection{Other boundary conditions}
See Ikeda-Watanabe \cite{ikwat89} for a general discussion of boundary conditions for diffusions, with references
to the large Japanese literature of papers by Sato, Ueno, Motoo and others dating back to the 60's.

\section{Brownian motion on manifolds}
\lb{sec.manifold}
\subsection{Constructions}
A Riemannian manifold $M$ is a manifold equipped with a Riemannian metric.
Starting from this structure, there are various expressions for the Laplace-Beltrami operator
$\delm$, and the Levi-Civita connection.
Closely associated with the Laplace-Beltrami operator is the
fundamental solution of the heat equation on $M$ derived from $\hf \delm$.
This defines a semigroup of transition probability operators 
from which one can construct a Brownian motion on $M$.
Alternatively, the Brownian motion on $M$ with generator $\hf \delm$ 
can be constructed by solving a martingale problem associated with $\hf \delm$.
Note that in general the possibility of explosion must be allowed:
the $M$-valued Brownian motion $B$ may be defined only up to some
random \emph{explosion time} $e(B)$.

At least two other constructions of Brownian motion on $M$ may be considered,
one known as \emph{extrinsic}, the other as \emph{intrinsic}.
Some examples of the extrinsic construction appear in the work of
Lewis 
and van den Berg 
\cite{MR859959} \cite{MR806240}. 
In general, this construction relies on Nash's embedding of $M$ as 
a submanifold of $\reals^\ell$, with the induced metric.
Following Hsu \cite[Ch. 3]{MR2003c:58026},
let $\{ \xi_\alpha , 1 \le \alpha \le \ell\}$  be the standard orthonormal
basis in $\reals^\ell$, let $P_\alpha$ be the orthogonal projection of
$\xi_\alpha$ onto $T_xM$, the tangent space at $x \in M$.
Then $P_\alpha$ is a vector field on $M$, and the Laplace-Beltrami
operator $\delm$ can be written as
$$
\delm = \sum_{\alpha = 1}^\ell P_\alpha^2 
$$
and the Brownian motion started at $x \in M$ may be constructed as the solution of the Stratonovich SDE
$$
d X_t = \sum_\alpha P_\alpha (X_t) \circ dW_t^\alpha, ~~~~X_0 = x \in M .
$$
where $(W^\alpha, 1 \le \alpha \le \ell)$ is a BM in $\reals^\ell$.
See also Rogers and Williams \cite{rw2} and Stroock 
\cite[Ch. 4]{MR1715265} 
for further development of the extrinsic approach.

The intrinsic approach to construction of BM on a manifold involves a lot
more differential geometry. See Stroock \cite[Chapters 7 and 8]{MR1715265} and
other texts listed in the references, which include the 
theory of semimartingales on manifolds, as developed by L. Schwartz, P. A. Meyer and M. Emery. 

\subsection{Radial processes}
Pick a point $o \in M$, a Riemannian manifold of dimension $2$ or more, and with the help of the exponential map based at $o$,
define polar coordinates $(r,\theta)$ and hence processes 
$r(B_t)$ and $\theta(B_t)$ where $B$ is Brownian motion on $M$.
The \emph{cutlocus of $o$}, denoted $C_o$, is a subset of $M$ such that
$r$ is smooth on $M - \{o\} - C_o$, the set $M - C_o$ is a dense open subset of $M$, and the distance from $o$ to $C_o$ is positive.
Kendall \cite{MR88k:60151} showed that
there exists a one-dimensional Brownian motion $\beta$ and a non-decreasing
process $L$, the local time process at the cutlocus, which increases only when $B_t \in C_o$ 
such that
$$
r(X_t) - r(X_0) =  \beta_t  + \hf \int_0^t \delm \, r (B_s ) ds - L_t
~~~~~~t  < e(B).
$$
See also 
\cite{MR96d:58153},
\cite[Th. 3.5.1]{MR2003c:58026} and
\cite{MR89i:58157}.  
Hsu \cite[\S 4.2]{MR2003c:58026},
shows how this representation of the radial process allows a
comparison with a one-dimensional diffusion process to conclude 
that a growth condition on the lower bound of the Ricci curvature 
provides a sufficient condition for the BM not to explode. The condition
is also necessary under a further regularity condition on $M$.

\subsection{References}
We refer to the following monographs and survey articles
for further study of Brownian motion and diffusions on manifolds.

\paragraph{Monographs}
\begbib
\item[\cite{MR84d:58080}]K.~D. Elworthy. \newblock {\em Stochastic differential equations on manifolds} (1982).

\item[\cite{MR90c:58187}]David Elworthy. \newblock {\em Geometric aspects of diffusions on manifolds} (1988).

\item[\cite{MR2001f:58072}]K.~D. Elworthy, Y.~Le~Jan, and Xue-Mei Li. \newblock {\em On the geometry of diffusion operators and stochastic flows} (1999).

\item[\cite{MR90k:58244}]M. {\'E}mery. \newblock {\em Stochastic calculus in manifolds} (1989).

\item[\cite{MR2003c:58026}]Elton~P. Hsu. \newblock {\em Stochastic analysis on manifolds} (2002). 

\item[\cite{ikwat89}]N.~Ikeda and S.~Watanabe. \newblock {\em Stochastic differential equations and diffusion processes} (1989). 

\item[\cite{MR81d:60077}]Paul Malliavin. \newblock {\em G\'eom\'etrie diff\'erentielle stochastique} (1978). 

\item[\cite{MR658725}] P.-A. Meyer {\em G\'eometrie diff\'erentielle stochastique. II } (1982).

\item[\cite{rw2}]L.~C.~G. Rogers and D.~Williams. \newblock {\em {Diffusions, Markov Processes and Martingales, Vol. II: It{\^o}   Calculus}} (1987).

\item[\cite{MR83k:60064}]L. Schwartz. \newblock {\em G\'eom\'etrie diff\'erentielle du 2\`eme ordre, semi-martingales et   \'equations diff\'erentielles stochastiques sur une vari\'et\'e   diff\'erentielle. } (1982).

\item[\cite{MR86b:60085}]L. Schwartz. \newblock {\em Semimartingales and their stochastic calculus on manifolds} (1984).

\item[\cite{MR1715265}]D. ~W. Stroock. \newblock {\em An introduction to the analysis of paths on a {R}iemannian   manifold} (2000).

\endbib
\paragraph{Survey articles}
\begbib
\item[\cite{MR54:1404}]S.~A. Mol{\v{c}}anov. \newblock {\em Diffusion processes, and {R}iemannian geometry } ( 1975).

\item[\cite{MR84e:60084}]P.-A. Meyer. \newblock {\em A differential geometric formalism for the {I}t\^o calculus} (1981)
\endbib


\section{Infinite dimensional diffusions}

\begbib
\item[\cite{MR1465436}] G. Kallianpur and J. Xiong {\em Stochastic differential equations in infinite-dimensional spaces} (1995).
\endbib

\subsection{Filtering theory}

We refer to the textbook treatments of \cite[Ch. VI]{oks03} and \cite[Ch. VI]{rw2} Rogers-Williams and the monographs 
\begbib
\item[\cite{MR583435}] G. Kallianpur {\em Stochastic filtering theory} (1980).
\item[\cite{MR966884}] G. Kallianpur and R. L. Karandikar {\em White noise theory of prediction, filtering and smoothing} (1988)
\endbib

See also the survey papers by Kunita 
\cite{MR2884592} 
\cite{MR2884593}. 

\subsection{Measure-valued diffusions and Brownian superprocesses}

Some monographs are

\begbib
\item[\cite{MR1242575}]D.A. Dawson. \newblock {\em Measure-valued Markov processes} (1994). 
\item[\cite{MR2003c:60001}]E.~B. Dynkin. \newblock {\em Diffusions, superdiffusions and partial differential equations} (2002).
\item[\cite{MR1779100}] A. M. Etheridge {\em An introduction to superprocesses} (2000).
\item[\cite{legall99}]J.-F. Le~Gall. \newblock {\em Spatial branching processes, random snakes and partial   differential equations} (1999).  
\item[\cite{MR2031708}]B. Mselati. \newblock {\em Classification and probabilistic representation of the positive   solutions of a semilinear elliptic equation } (2004).
\endbib

These studies are related to the non-linear PDE $\Delta u = u^2$.
See also Dynkin \cite{MR2069167} who treats the equation
$\Delta u  = u^\alpha$ for $1 < \alpha \le 2$
and Le Gall \cite{legall00}.

\subsection{Malliavin calculus}
\label{sec:malliavin}

The term {\em Malliavin calculus}, formerly called {\em stochastic calculus of variations}, refers to the study of
infinite-dimensional, Gaussian probability spaces inspired by 
P. Malliavin's paper \cite{MR81f:60083}. 
The calculus is designed to study the probability densities of functionals of Gaussian
processes, hypoellipticity of partial differential operators, and the theory of non-adapted stochastic integrals.
The following account is quoted from Ocone's review of Nualart's text \cite{MR96k:60130} in {\em Math. Reviews}:

\begin{quote}
A partial differential operator $A$ is hypoelliptic if $u$ is $C^\infty$ on those open sets where $Au$ is $C^\infty$.  
H\"ormander's famous hypoellipticity theorem states that if $A=X_0+\sum_1^dX_i^2$, where
$X_0,\cdots,X_d$ are smooth vector fields, and if the Lie algebra
generated by $X_0,\cdots,X_d$ is full rank at all points, then $A$ is
hypoelliptic. Now, second-order operators such as $A$ appear as
infinitesimal generators of diffusion processes solving stochastic
differential equations driven by Brownian motion.  Thus the study of
$A$ can be linked to the theory of stochastic differential equations
(SDEs). In particular, hypoellipticity of the generator is
connected to the existence of smooth densities for the probability
laws of the solution.  If the Fokker-Planck operator is hypoelliptic,
the solutions of the Fokker-Planck equation are smooth functions
providing transition probability densities for the corresponding
diffusion. Conversely, it is possible to work back from the existence
of smooth densities to hypoellipticity.  Because solutions of SDEs
are functionals of the driving Brownian motion, the question of
hypoellipticity of the generator is then an aspect of a much more
general problem. Given an $\reals^n$-valued functional $G(W)$ of a Gaussian
process $W$, when does the probability distribution of $G(W)$ admit a
density with respect to Lebesgue measure and how regular is it?
Malliavin realized how to approach this question using a differential
calculus for Wiener functionals.  His original work contained two
major achievements: a general criterion for the existence and
regularity of probability densities for functionals of a Gaussian
process, and its application to solutions of SDEs, leading to a
fully stochastic proof of H\"ormander's theorem.

Malliavin's paper was tremendously influential, because it provided
stochastic analysts with a genuinely new tool.  Of the major lines of
investigation that ensued, let us mention the following, in only the
roughest manner, as background to Nualart's new text. First is the
continued study of existence and representation of densities of Wiener
functionals, its application to hypoellipticity, short-time
asymptotics, index theorems, etc. of solutions to second-order
operator equations, and its application to solutions of
infinite-dimensional stochastic evolution equations, such as
stochastic PDE, interacting particle systems, or delay
equations. The seminal work here is due to Stroock, Kusuoka, Watanabe,
and Bismut.  Second, Wiener space calculus has found application to
the quite different problem of analysis of non-adapted Brownian
functionals, following a paper of Nualart and Pardoux, who derived a
calculus for non-adapted stochastic integrals, using heavily the
Sobolev spaces defined in Wiener space analysis.  This made possible a
new study of stochastic evolution systems in which anticipation of the
driving noise occurs or in which there is no flow of information given
by a filtration.  Finally, we mention that the Malliavin calculus has
led to new progress and applications of the problem of
quasi-invariance of Wiener processes under translation by nonlinear
operators, a non-adapted version of the Girsanov problem.
\end{quote}

\paragraph{General references}
\begbib
\item[\cite{MR88m:60155}]D.~R. Bell. \newblock {\em The {M}alliavin calculus} (1987). 

\item[\cite{MR86f:58150}]J.-M. Bismut. \newblock {\em Large deviations and the {M}alliavin calculus} (1984). 

\item[\cite{ikwat89}]N.~Ikeda and S.~Watanabe. \newblock {\em Stochastic Differential Equations and Diffusion Processes} (1989). 

\item[\cite{MR99b:60073}]P. Malliavin. \newblock {\em Stochastic analysis} (1997). 

\item[\cite{MR89f:60058}]J. Norris. \newblock {\em Simplified {M}alliavin calculus} (1986). 

\item[\cite{MR96k:60130}]D. Nualart. \newblock {\em The {M}alliavin calculus and related topics} (1995). 

\item[\cite{MR84d:60092a}] and \item[\cite{MR84d:60092b}].
D.~W. Stroock. \newblock {\em The {M}alliavin calculus and its application to second order   parabolic differential equations, I and II} (1981).
%
See also \cite{MR83h:60076} and \cite{MR85c:60088}].

\cite{MR86b:60113} S. Watanabe {\em Lectures on stochastic differential equations and Malliavin calculus} (1984). 

\item[\cite{MR83e:60001}]D. Williams. \newblock {\em To begin at the beginning: {$\ldots $}} (1981).
\endbib

\paragraph{Applications to mathematical finance}


\begbib

\item [\cite{MR1968106}] {\em Conference on {A}pplications of {M}alliavin {C}alculus in {F}inance} (2003)


\item [\cite{MR2189710}] P. Malliavin and A. Thalmaier.  {\em Stochastic calculus of variations in mathematical finance} (2006).


\endbib

\section{Connections with analysis}

Kahane \cite{kahane97} provides a historical review of a century of interplay between Taylor series, Fourier series and Brownian motion.


\subsection{Partial differential equations}
\label{sec:pdes}
For general background on PDE, we refer to
L. C. Evans \cite{MR99e:35001}. 
The texts of Katazas and Shreve \cite[Chapter 4]{ks88},
Freidlin \cite{MR87g:60066} and 
Durrett [Ch. 8]\cite{MR87a:60054} all contain
material on connections between BM and PDE.
Other sources are 
Bass \cite{bass95p} 
and Doob \cite{MR731258}, 
especially for parabolic equations, and
Glover \cite{gl88}.

\subsubsection{Laplace's equation: harmonic functions}
We begin by considering Laplace's equation: \[\Delta u  = 0.\] where
$\Delta u \coloneqq \sum_{i=1}^n u_{x_i,x_i}$.  
A {\em harmonic function} is a  $C^2$ function $u$ which solves 
Laplace's equation.

Let $\ball(x,r)\coloneqq\{y: |y-x|<r\}$ and let $D$ be a {\em domain}, that is a connected open subset of
${\reals}^n$. Let $\tau_D \coloneqq \inf\{t:B_t\in D^c\}$. Since each
component of a Brownian motion $B$ in $\reals^n$ is a.s. unbounded, $P(\tau_D<\infty)=1$ for any bounded
domain $D$.  If $u$ is harmonic in $D$, then
\eq
u(x)=\fint_{\partial \ball(x,r)} u(y)S(dy)
\en
for every $x\in D$ and  $r>0$ such that $\overline{\ball(x,r)}\subset D$,
where $S$ is the uniform probability distribution on ${\partial \ball(x,r)}$ which is
invariant under orthogonal transformations.
To see this, observe that by It\^{o}'s formula,
\begin{equation*}
\begin{split}
u(B_{t\wedge
\tau_{\ball(x,r)}})&=u(x)+\sum_{i=1}^n\int_0^{t\wedge\tau_{\ball(x,r)}}u_{x_i}(B_s)dB_s+ \frac{1}{2} \int_0^{t\wedge\tau_{\ball(x,r)}}\Delta u(B_s)ds\\
    &=u(x)+\sum_{i=1}^n\int_0^{t\wedge\tau_{\ball(x,r)}} u_{x_i}(B_s)dB_s
\end{split}
\end{equation*}
which is a continuous local martingale.
But since $u(B_{t\wedge\tau_{\ball(x,r)}})-u(x)$ is uniformly bounded, it is a
true martingale with mean 0. Taking expectation and using
$P(\tau_{\ball(x,r)}<\infty)=1$ gives
\[
u(x) =  \ER^x\left( u(B_{\tau_{\ball(x,r)}}) \right) = \fint_{\partial \ball(x,r)}
u(y) dS.
\]
where the last equality is by the symmetry of Brownian motion with respect to orthogonal transformations.
Conversely, if $u:D\rightarrow \mathbb{R}$ has this mean value property, then $u$ is $C^\infty$ and harmonic.
(Gauss's theorem).

\subsubsection{The Dirichlet problem}
Some references are \cite[\S 4.2]{ks88},
\cite{bass95p}, 
\cite{MR87a:60054}, 
\cite{MR2604525}. 
Consider the equation
\begin{equation}
\Delta u =0 \mbox{, in } D \mbox{ and }u=f \mbox{ on }\partial D
\label{eq:pde}
\end{equation}
where $\ER^x\left( |f(B_{\tau_D})| \right) < \infty$.  
Then $u(x)\coloneqq\mathbb{E}_x\left(f(B_{\tau_D})\right)$ has
$\Delta u=0$ in $D$.
because
\begin{equation*}
\begin{split}
u(x)=\mathbb{E}_x f(B_{\tau_D}) &=\mathbb{E}_x\left(\mathbb{E}_x\left(
f(B_{\tau_D})|{\mathcal F}_{\tau_{\ball(x,r)}} \right)\right)\\
&=\mathbb{E}_x u(B_{\tau_{\ball(x,r)}})\quad\mbox{ by strong Markov property}\\
&=\fint_{\partial \ball(x,r)}u(y)dS
\end{split}
\end{equation*}
So in order to have a solution to the partial differential equation
(\ref{eq:pde}), we need:
\[\lim_{x\rightarrow a} \mathbb{E}_x \left( f(B_{\tau_D}) \right) =
f(a),\quad a\in\partial D.\]
This is true under a natural condition on the boundary. It should
be {\em regular}:
\[ \mbox{if } \sigma_D=\inf\{t>0: B_t\in D^c\}, \mbox{ then }
\mathbb{P}_x(\sigma_D=0)=1,\quad\forall x\in\partial D.\]
See \cite{ikwat89} 
for a more refined discussion, treating irregular boundaries.

\subsubsection{Parabolic equations}
Following is a list of PDE's of parabolic type related to BM$(\reals^\delta)$.
For $u: [0,\infty) \times \reals^\delta \te \reals$, write $u = u(t,x)$,
let $u_t:= \partial u /\partial t$, and let $\Delta$ denote the Laplacian, and $\nabla$ the gradient, acting on the variable $x$.
Assume the initial boundary condition $u(0,x) = f(x)$.
Then
\eq \lb{e1}
u_t = \hf \Delta u
\en
is solved by
$$
u(t,x) = \ER^x [ f(B_t ) ].
$$
\eq \lb{e2}
u_t = \hf \Delta u + g
\en
for $g : [0,\infty) \times \reals^\delta \te \reals$ is solved by
$$
u(t,x) = \ER^x \left[ f(B_t ) + \int_0^t g(t-s,B_s) ds \right].
$$
\eq \lb{e3}
u_t = \hf \Delta u + c u  
\en
where $c = c(x) \in \reals$
is solved by
\eq \lb{e3sol}
u(t,x) = \ER^x \left[ f(B_t ) \exp \left(  \int_0^t c(B_s) ds \right) \right].
\en
\eq \lb{e4}
u_t = \hf \Delta u + b \cdot \Delta u  
\en
for $b = b(x) \in \reals$
is solved by
\eq \lb{e4sol}
u(t,x) = \ER^x \left[ f(B_t ) \exp \left( \int_0^t b(B_s) \cdot d B_s - \hf \int_0^t | b(B_s)|^2 ds \right) \right].
\en

Equation \re{e1} is the classical \emph{heat equation}, discussed
already in Section 
\ref{infgen}.  
See \cite[\S 4.3]{ks88} for the one-dimensional case, and 
Doob \cite{MR731258} 
for a more extensive discussion.

Equation \re{e4} is the variant when Brownian motion $B$ is replaced by a BM with
drift $b$, which may be realized as the solution of the SDE
\eq \lb{sdeb}
X_t = x + B_t + \int_0^t b(X_s) ds .
\en
It is a nice result, due to 
Zvonkin \cite{zvonkin1974transformation},
for dimension $\delta = 1$ and
Veretennikov-Krylov \cite{veretennikov1976explicit}
for $\delta = 2,3, \ldots$, that for $b$ Borel
bounded, equation \re{sdeb} has a unique strong solution;
that the solution is unique in law is a consequence of Girsanov's theorem.
The right side of \re{e4sol} equals
$\ER^x [ f(X_t) ]$ for $(X_t)$ the solution of \re{sdeb}.

The expression on the right side of \re{e3sol} is the celebrated 
\emph{Feynman-Kac formula}. 
See \cite{keskac86} for a historical review,
and \cite{jpy96} 
for further discussion of the case
$\delta = 1$ using Brownian excursion theory to analyse
$$
\int_0^\infty dt e^{-\lambda t } \ER^x \left[ f(B_t ) \exp \left(  \int_0^t c(B_s) ds \right) \right].
$$
A well known application of the Feynman-Kac formula is Kac's derivation of
L\'evy's arcsine law for the distribution of $\int_0^t 1(B_s >0 ) ds$
for $B$ a BM$(\reals)$.
Kac also studied the law of $\int_0^t |B_s| ds$ by the same method.
See Biane-Yor \cite{MR886959} 
and \cite{perwel95} for related results.
Extensions of the Feynman-Kac formula to more general Markov processes
are discussed in \cite{rw1}
\cite{MR1963670} 
\cite{MR1670526}. 

\subsubsection{The Neumann problem}
Just as the distribution of $B$ stopped when it first hits $\partial D$ solves the Dirichlet boundary value problem,
for suitably regular domains $D$ the transition function $p(t,x,y)$ of $B$ reflected in $\partial D$ is the fundamental solution of the Neumann boundary value problem:
\eq
\left( \frac{ \partial} {\partial t} - \frac{1}{2}  \Delta_x \right) p(t,x,y) = 0 \qquad ( t >0, x, y \in D )
\en
with the Neumann boundary condition
\eq
\frac{ \partial} {\partial n_x} p(t,x,y) = 0 \qquad ( t >0, x \in D , y \in D)
\en
where $n_x$ is the inner normal at the point $x \in \partial D$, and
initial condition
\eq
\lim_{t \downarrow 0 } p(t,x,y) = \delta_y(x) \qquad (x,y \in D ).
\en
See Fukushima \cite{MR0231444}, 
Davies \cite{MR990239}, 
Ikeda-Watanabe \cite{ikwat89}, 
Hsu \cite{MR2633560} 
and Burdzy \cite{MR3184496}. 
See \cite{MR2396121} 
for treatment of related problems for irregular domains.

\subsubsection{Non-linear problems}
Le Gall's monograph
\cite{MR1714707} 
treats spatial branching processes and random snakes derived from Brownian motion, and their relation to 
non-linear partial differential equations such as $\Delta u = u^2$.

\subsection{Stochastic differential Equations}

In order to consider more general PDEs, we need to introduce the notion of
a {\em stochastic differential equations (SDEs)}. We say that the semimartingale
$X$ solves the SDE
\[dX_t=\sigma(X_t)dB_t+b(X_t)dt\]
if
\begin{equation}
X_t=X_0+\int_o^t \sigma(X_s)dB_s+\int_0^tb(X_s)ds \label{eq:sde2}
\end{equation}
Solutions to three equations exist in particular when $\sigma$ and $b$ are
bounded and Lipschitz. The proof is based on Picard's iteration.

\begin{claim}
If $X_t$ solves (\ref{eq:sde2}), then
\[M_t^f=f(X_t)-f(X_0)-\int_0^t Af(X)ds,\;t\geq0,\;f\in C^2\]
where
\[Af(x)=1/2\sum_{i,j=1}^na^{ij}(x)f_{x_i x_j}( x)+\sum_{i=1}^n
b^i(x)f_{x_i}(x)\]
and $a=\sigma\sigma^T$, is a martingale.
\end{claim}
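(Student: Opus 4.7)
The plan is to apply the multidimensional It\^o formula to $f(X_t)$ and then match terms against the SDE \re{eq:sde2}. First I would write
\[
f(X_t) - f(X_0) = \sum_{i=1}^n \int_0^t f_{x_i}(X_s)\, dX_s^i + \hf \sum_{i,j=1}^n \int_0^t f_{x_i x_j}(X_s)\, d\langle X^i, X^j\rangle_s,
\]
which is legitimate for $f \in C^2$ since $X$ is a continuous semimartingale.

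Next I would compute the two ingredients from the SDE representation $X_t^i = X_0^i + \sum_k \int_0^t \sigma^{ik}(X_s)\, dB_s^k + \int_0^t b^i(X_s)\, ds$. Substituting, $\sum_i f_{x_i}(X_s)\, dX_s^i$ splits into a stochastic integral against $dB$ plus $\sum_i b^i(X_s) f_{x_i}(X_s)\, ds$. For the bracket term, since the finite-variation part contributes nothing and the Brownian components are orthogonal with $d\langle B^k, B^\ell\rangle_s = \delta_{k\ell}\, ds$, one gets
\[
d\langle X^i, X^j\rangle_s = \sum_k \sigma^{ik}(X_s)\sigma^{jk}(X_s)\, ds = a^{ij}(X_s)\, ds.
\]
Assembling the pieces, the $ds$-terms combine to exactly $Af(X_s)\, ds$, and the remainder is
\[
M_t^f = \sum_{i,k} \int_0^t f_{x_i}(X_s)\, \sigma^{ik}(X_s)\, dB_s^k,
\]
which is a continuous local martingale by the general theory of stochastic integration against Brownian motion.

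The main obstacle is the upgrade from local martingale to true martingale, since the claim asserts martingality outright. Under the standing assumptions that $\sigma$ and $b$ are bounded and Lipschitz (so that the solution $X$ exists and is well behaved), the integrand $f_{x_i}(X_s)\sigma^{ik}(X_s)$ is bounded as long as $f \in C_b^2$, and then the It\^o isometry \re{hid} gives $\mathbb{E}\int_0^t |f_{x}(X_s)\sigma(X_s)|^2 ds < \infty$, so $M^f$ is a genuine $L^2$ martingale. For general $f \in C^2$ without a boundedness hypothesis, the cleanest path is localization: introduce $T_n = \inf\{t : |X_t| \ge n\}$, observe $T_n \uparrow \infty$ almost surely by continuity of $X$, and on $[0, T_n]$ the integrand is bounded so $(M^f_{t\wedge T_n})$ is a martingale, yielding the local martingale property in general and the martingale property under any standard growth condition on $f$, $f'$, $f''$.
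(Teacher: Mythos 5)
Your proposal is correct and follows essentially the same route as the paper's own proof: compute $d\langle X^i,X^j\rangle_s = a^{ij}(X_s)\,ds$ from the SDE, apply the multidimensional It\^o formula, and identify $M^f_t$ as the stochastic integral $\sum_{i,k}\int_0^t f_{x_i}(X_s)\sigma^{ik}(X_s)\,dB^k_s$. Your final paragraph on upgrading from local martingale to true martingale (boundedness of the integrand for $f\in C^2_b$ with $\sigma$ bounded, or localization otherwise) is a point the paper's proof passes over in silence, and is a worthwhile addition rather than a deviation.
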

\begin{proof}
\begin{equation*}
\begin{split}
<X^i,X^j>&=\sum_{k,l=1}^n <\sigma^{ik}(X).B^k,\sigma^{jl}(X).B^l>_t\\
    &=\sum_{k,l=1}^n \sigma^{ik}\sigma^{jl}. <B^k,B^l>_t\\
    &=\int_0^t a^{ij}<X_s>ds.
\end{split}
\end{equation*}

So by It\^{o}'s formula,
\begin{equation*}
\begin{split}
f(X_t)&=f(X_0)+\sum_{i=1}^n\int_0^tf_{X_i}(X)dX^i+1/2\sum_{i,j=1}^n\int_0^tf_{x_i x_j}(X_s)d<X^i,X^j>_s\\
&=f(X_0)+\sum_{i,j=1}^n\int_0^t \sigma^{ij}(X_s)f_{x_i}(X_s)dB^j_s  +
\int_0^tAf(X)ds
\end{split}
\end{equation*}

Now assume that $u\in C^2(D)\cap C(\overline{D})$ is a solution of
\begin{equation*}
-Au(X) =f(X) \mbox{ in } D, \mbox{ and }u=0 \mbox{ on }\partial D.
\end{equation*}
Then $u(x)=\mathbb{E}_x\left(\int_0^{\tau_D} f(X_s)ds\right)$. By It\^{o},
\begin{equation*}
\begin{split}
u(X_{t\wedge\tau_D})-u(x)&=M_{t\wedge\tau_D}^f+\int_0^{t\wedge\tau_D}Au(X_s)ds\\
    &=M_{t\wedge\tau_D}^f-\int_0^{t\wedge\tau_D} f(X_s)ds
\end{split}
\end{equation*}
Now taking expectation and limit as $t\rightarrow\infty$,
\[\mathbb{E}_x u(X_{\tau_D})-u(x)=-\mathbb{E}_x\int_0^{\tau_D}f(X_s)ds\] and
so
\[u(x)=\mathbb{E}_x\int_0^{\tau_D}f(X_s)ds.\]
\end{proof}

\subsubsection{Dynamic Equations}

We consider {\em dynamic equations} of the form
\begin{eqnarray*}
u_t&=&Au-cu\;\mbox{ in }(0,\infty)\times {\mathbb R}^n\\
u(0,x)&=&f(X)
\end{eqnarray*}

We show that the $C^2$ solutions of this equation are of the form
\[u(t,x)=\mathbb{E}_x\left( f(X_t)exp \int_0^tc(X_s)ds\right)\]
The first step is to show that
$u(t-s,X_s)exp\left(-\int_0^tc(X_s)ds\right)$ is a local martingale on
$[0,t)$.

If $c,u$ is bounded, then $M_s$ above is a bounded martingale. The martingale
convergence theorem implies that as $s\nearrow t$, $M_s\rightarrow M_t$.
Since $u$ is continuous and $u(0,x)=f(x)$, we must have
\[lim_{s\nearrow t} M_s=f(B_t)exp\left( - \int_0^tc(X_s)ds\right).\]

So we have
\[\mathbb{E}_x f(X_t)exp\left(-\int_0^tc(X_s)ds\right)=u(t,x)\]

We have seen that the solution to
\begin{eqnarray*}
-\Delta u&=&f\mbox{ in }D\\
u&=&0 \mbox{ on }\partial D
\end{eqnarray*}
is
\[u(x)=\mathbb{E}_x\int_0^{\tau_D}f(B_s)ds.\]
So if $f=1$, we have $\mathbb{E}_x(\tau_D)$ is the solution of
\begin{eqnarray*}
-\Delta u &=f\mbox{ in }D\\
u&=0\mbox{ on }\partial D
\end{eqnarray*}
For example, if $D=B(0,1)$, then the solution is $(1-|x|^2)/n$,
$\Rightarrow\; \mathbb{E}_x(\tau_{\ball(x,1)})=(1-|x|^2)/n$.

\subsection{Potential theory}

The probabilistic theory of Brownian motion is closely related to the classical potential theory of the Laplace operator and the parabolic
potential theory of the heat operator.  

\paragraph{Analytic treatments of potential theory}
\begbib
\item [\cite{MR0222317}]   O. D. Kellogg. {\em Foundations of potential theory} (1929).
\item [\cite{MR0261018}]  L. L. Helms. {\em Introduction to potential theory} (1969).
\item[\cite{MR1801253}] D. H.  Armitage and S. J. Gardiner {\em Classical potential theory} (2001).
\item[\cite{MR2907452}] N. A. Watson.  {\em Introduction to heat potential theory} (2012).
\endbib

\paragraph{Relations between Brownian motion and potential theory}

\begbib

\item[\cite{MR45:2798}]M. Kac. \newblock {\em Aspects probabilistes de la th\'eorie du potentiel} (1970). 

\item[\cite{MR83g:60096}]M.~Kac. \newblock {\em Integration in function spaces and some of its applications} (1980). 

\item[\cite{MR58:11459}]S.~C. Port and C.~J. Stone. \newblock {\em Brownian motion and classical potential theory} (1978). 

\item[\cite{MR55:13589}]M. Rao. \newblock {\em Brownian motion and classical potential theory} (1977). 

\item[\cite{MR30:2562}]F. Spitzer. \newblock {\em Electrostatic capacity, heat flow, and {B}rownian motion} (1964).

\item[\cite{MR92i:60002}]R. Durrett and H. Kesten, editors. \newblock {\em Random walks, {B}rownian motion, and interacting particle   systems} (1991).

\item[\cite{MR731258}] J.~L. Doob. \newblock {\em Classical potential theory and its probabilistic counterpart} (1984).
\endbib


%

\subsection{BM and harmonic functions}

Some references are:
%
%
%
%

\begbib

\item[\cite{bass95p}]R.~F. Bass. \newblock {\em Probabilistic techniques in analysis} (1995).  

\item[\cite{bass98deo}]R.~F. Bass. \newblock {\em Diffusions and elliptic operators} (1998). 

\item[\cite{MR57:14162}]D.~L. Burkholder. {\em Brownian motion and classical analysis} (1977). . \newblock In {\em Probability (Proc. Sympos. Pure Math., Vol. XXXI, Univ.   Illinois, Urbana, Ill., 1976)}, pages 5--14. Amer. Math. Soc., Providence,   R.I., 1977.   

\item[\cite{MR80j:30001}]B. Davis. \newblock {\em Brownian motion and analytic functions} (1979). 

\item[\cite{MR87a:60054}]R.  Durrett. \newblock {\em Brownian motion and martingales in analysis} (1984). 

\item[\cite{MR91c:60059}]R. ~F. Gundy. \newblock {\em Some topics in probability and analysis} (1989). 

\item[\cite{MR53:6768}]J.-P. Kahane. \newblock {\em Brownian motion and classical analysis } (1976). 

\item[\cite{MR58:31383}]K. ~E. Petersen. \newblock {\em Brownian motion, {H}ardy spaces and bounded mean oscillation} (1977). 

\endbib
\subsection{Hypercontractive inequalities}


Hypercontractive estimates show that some semigroup $(P_t)$ acting on a probability space is a contraction form $L^p$ to $L^q$ with $p<q$, and some $t>0$. 
Neveu \cite{MR0431400} 
used stochastic integration with respect to Brownian motion to establish a hypercontractive estimate due to Nelson for conditional expectations
of Gaussian variables.
Bakry and \'Emery 
\cite{MR889476} 
treat hypercontractive diffusions and their relation to the
     logarithmic Sobolev inequalities of Gross \cite{MR0420249}.
See also  Stroock \cite{MR545251}, 
Ba\~nuelos and Davis \cite{MR1206335}.  
Saloff-Coste \cite{MR1150597} relates {P}oincar\'e, {S}obolev, and {H}arnack inequalities  
in the setting of a second order partial differential operator on a manifold.
Ledoux  
\cite{MR1600888} 
\cite{MR2858471} 
offers overviews of the broader domain of isoperimetry and Gaussian analysis.
		
%

\section{Connections with number theory}


%
%
%

According to the central limit theorem of
Erd\"os--Kac 
\cite{MR0002374} 
in the theory of additive number theoretic functions, if $\omega(n)$ is the number of distinct prime factors of $n$, then for $n$ picked uniformly at random from the integers
from $1$ to $N$, as $N \to \infty$ the limit distribution of $(\omega(n) - \log \log n )/\sqrt{ \log \log n }$ is standard normal.
Billingsley \cite{MR0345144} 
showed how Brownian motion appears as the limit distribution of a random path created by a natural extension of this construction.
See also Philipp \cite{MR0354602} 
and Tennenbaum \cite{MR3363366}.  

The articles  of Williams \cite{MR1064572} 
and
Biane, Pitman and Yor \cite{MR1848256}
study various Brownian functionals whose probability densities
involve Jacobi's theta functions, and whose Mellin transforms involve the Riemann zeta function and other
zeta functions that arise in analytic number theory. In particular, the well known functional equation satisfied by the
Riemann zeta function involves moments of the common distribution of the range of a Brownian bridge and the
maximum of a Brownian excursion.
See also \cite{MR2966105} 
regarding other probability distributions related to the Riemann zeta function, and further references.


\section{Connections with enumerative combinatorics}
\label{sec:enum}

There are several contexts in which Brownian motion, or some conditioned fragment of Brownian motion
like Brownian bridge or Brownian excursion, arises in a natural way as the limit distribution of
a random path created from a random combinatorial object of size $n$ as $n \to \infty$.
In the first instance, as in Section \ref{sec:rw}, these limit processes are obtained from a path with uniform distribution on $2^n$ paths of length $n$
with increments of $\pm$, or on a suitable subset of such paths.
Less obviously, Brownian excursion also arises as the limit distribution of a path encoding the structure of any one of a number of natural combinatorial
models of random trees. Harris \cite{MR0052057} 
may have been the first to recognize the branching structure encoded in a random walk. This structure was 
exploited by Knight \cite{kt63} 
in his analysis of the local time process of Brownian motion defined by limits of occupation times of random walks.
Aldous \cite{MR1207226} 
developed the concept of the {\em Brownian continuum random tree}, now understood as a particular kind of random real tree  
\cite{MR2351587} 
that is naturally encoded in the path of a Brownian excursion. 
Aldous showed how Brownian excursion arises as the weak limit of a height profile process derived derived from a Galton-Watson tree conditioned to have $n$ vertices, as $n \te \infty$, 
for any offspring distribution with finite variance. This includes a number of natural combinatorial models for trees with $n$ vertices.
See \cite{csp} and \cite{MR2351587} for reviews of the work of Aldous and others on this topic.

For a uniformly distributed random mapping from an $n$ element set to itself, 
Aldous and Pitman \cite{MR1293075} 
showed how 
encoding trees in the digraph of the mapping as excursions of a random walk
leads to a functional limit theorem in which a reflecting Brownian bridge is obtained as the limit. This yields a large number of
limit theorems for attributes of the random mapping such as height of the tallest tree in the mapping digraph.
See \cite[\S 9]{csp} and \cite{MR2351587} for an overview and further references, and
\cite[\S  6.4]{csp} for a brief account of Aldous's theory of critical random graphs and the multiplicative coalescent.

Much richer limiting continuum structures related to the Brownian continuum tree have been derived in the last decade in the analysis
of limit distributions for random planar maps and related processes. Some recent articles on this topic are

\begbib
\item[\cite{MR3469151}] G. Miermont {\em Random maps and continuum random 2-dimensional geometries} (2013).
\item[\cite{MR3204495}] J.-F. Le Gall {\em The Brownian map: a universal limit for random planar maps} (2014).
\item[ \cite{MR3606744}] N. Curien and J.-F. Le Gall {\em Scaling limits for the peeling process on random maps} (2017).
\item [\cite{MR3758730}] J. Bertoin,  and N. Curien and I. Kortchemski.  {\em Random planar maps and growth-fragmentations} (2018).
\endbib

\subsection{Brownian motion on fractals}

Some articles are
\begbib
\item[\cite{MR1701339}] M. T. Barlow and R. F. Bass. {\em Brownian motion and harmonic analysis on {S}ierpinski carpets}  (1999).

\item[\cite{MR2639315}] M. T. Barlow,  R. F. Bass,  T. Kumagai and A. Teplyaev. {\em Uniqueness of {B}rownian motion on {S}ierpi\'nski carpets} (2010).

\item[\cite{MR1227032}] T. Kumagai {\em Estimates of transition densities for {B}rownian motion on nested fractals} (1993)

\item[\cite{MR1736196}] T. Kumagai. {\em Brownian motion penetrating fractals: an application of the trace theorem of {B}esov spaces} (2000).

\endbib

\subsection{Analysis of fractals}

Lalley \cite{MR1188049} relates Brownian motion in the plane to the equilibrium measure on the {J}ulia set of a rational mapping.
For analytic treatment of related results about harmonic measure for domains with complicated boundaries, including
Makarov's theorem on harmonic measure \cite{MR794117}, 
see Garnett and Marchall \cite{MR2150803}.

\subsection{Free probability}
Some references are:
\begbib
\item[\cite{MR1426833}] Ph. Biane {\em Free Brownian motion, free stochastic calculus and random matrices} (1997).
\item[\cite{MR3585560}] J. A. Mingo and R. Speicher {\em Free probability and random matrices} (2017).
\endbib

\section{Applications}

\subsection{Economics and finance}
Some historical sources are
Bachelier's thesis \cite{bach1900}, 
Black and Scholes \cite{MR3363443}. 
Some textbooks:

\begbib

\item[\cite{MR1768877}] J.-P. Fouque, G. Papanicolaou and K. R. Sircar. {\em Derivatives in financial markets with stochastic volatility}, (2000).

\item [\cite{MR2362458}] D. Lamberton and B. Lapeyre.  {\em Introduction to stochastic calculus applied to finance} (2008).

\item [\cite{MR2189710}] P. Malliavin and A. Thalmaier.  {\em Stochastic calculus of variations in mathematical finance} (2006).

\item[\cite{MR1783083}] J. M. Steele. {\em Stochastic calculus and financial applications} (2001).

\item[\cite{MR1718056}] G. Kallianpur and R. K. Karandikar. {\em Introduction to option pricing theory} (2000).

\endbib
See also Karatzas-Shreve \cite[\S 5.8]{ks88}.  

The theory of optimal stopping in continuous time is developed by {\O}ksendal \cite[Ch. X]{oks03}.
Two monographs on this topic are:

\begbib
\item[\cite{MR2374974}] A. N. Shiryaev {\em Optimal stopping rules} (1978).
\item [\cite{MR2256030}] G.  Peskir and A. Shiryaev {\em Optimal stopping and free-boundary problems} (2006).
\endbib

\subsection{Statistics}

The application of Brownian motion to the theory of empirical processes is treated in 
\cite{sw86} 
and \cite{MR3524796}. 
Siegmund \cite{MR799155} 
 treats Brownian motion approximations of the sequential probability ratio test.
This kind of application has motivated many studies of boundary crossing probabilities for Brownian motion, such as
\cite{MR861122}.  

Some texts on statistical inference for diffusion processes and fractional Brownian motions are
\cite{MR3289986} 
\cite{MR3015023} 
\cite{MR2778592} 
\cite{MR2144185} 
\cite{MR1717690}. 

\subsection{Physics}

Some basic references are Einstein's 1905 paper \cite{einstein1905motion},
and Nelson's book \cite{MR35:5001}. 
See also the translation \cite{langevin97} of Paul Langevin¿s 1908 paper \cite{langevin08} on Brownian motion.
Redner \cite{MR2003f:82076} provides a physical perspective on first-passage processes.
Hammond \cite{MR3733949} 
offers a recent review of Smoluchowski's theory of coagulation and diffusion.
Sme general texts on applications of stochastic processes in physical sciences are
\begbib
\item[\cite{MR648937}] N. G. van Kampen {\em Stochastic processes in physics and chemistry} (1981).
\item[\cite{gardiner2009stochastic}] C. Gardiner. {\em Stochastic methods: A handbook for the natural and social sciences} (2009).
\endbib


Brownian motion has played an important role in the development of various models  of physical processes involving
random environments and random media. The Brownian map mentioned in Section \ref{sec:enum} 
may be regarded as providing a random two-dimensional
geometry. Closely related studies are the theory of the Gaussian free field and Liouville quantum gravity.
Some monographs in this vein are
\begbib
\item[\cite{MR1717054}] A.-S. Sznitman. {\em Brownian motion, obstacles and random media} (1998).
\item[\cite{MR1890289}] E. Bolthausen and A.-S. Sznitman.  {\em Ten lectures on random media} (2002).
\item[\cite{MR2327824}] J.-P. Fouque, J. Garnier,  G. Papanicolaou, and K. S\o lna. {\em Wave propagation and time reversal in randomly layered media} (2007).
\item[\cite{MR2932978}] A.-S. Sznitman. {\em Topics in occupation times and Gaussian free fields} (2012).
\item[\cite{MR3616274}] L. Zambotti. {\em Random obstacle problems} (2017).
\endbib

Other themes related to diffusions in random environments are treated in:

\begbib
\item[\cite{MR3362353}] J. Engl\"ander {\em Spatial branching in random environments and with interaction} (2015).
\item[\cite{MR1082348}] P. R\'ev\'esz {\em Random walk in random and nonrandom environments} (1990).
\item[\cite{MR2952852}]  T. Komorowski, C. Landim and S. Olla {\em Fluctuations in Markov processes} (2012).
\item[\cite{MR2226845}] Z. Shi {\em Sinai's walk via stochastic calculus}, (2001).
\endbib


\subsection{Fluid mechanics}

Systems of interacting Brownian motions have been used as a computational tool
in fluid mechanics. The Navier-Stokes equations
and the Prandtl boundary layer equations of fluid mechanics
can be interepreted as Fokker-Planck (or Kolmogorov) equations for 
interacting particles which diffuse by Brownian motion and carry vorticity 
(=rotation). At high Reynolds numbers it is computationally more efficient to 
model and sample the Brownian motions than to solve the original equations.
To satisfy the boundary conditions particles have to be created at solid walls
by a branching construction.
This idea was introduced in A. Chorin  
in 
\cite{MR52:16280} 
and 
\cite{chorin78}
for the Prandtl equations.
A good theoretical treatment can be found in 
the paper of Goodman 
\cite{MR88d:35159}
See also the book of Cottet and Koumoutsakos
\cite{MR2001c:76093}
 for a fairly comprehensive account of the theory and practice 
 of these methods.

\subsection{Control of diffusions}

An introduction is provided by {\O}ksendal \cite[Ch. XI]{oks03}.
We thank Vivek Borkar for providing the following list of monographs on this topic:
\begbib
\item[\cite{MR80h:60081}] Gihman and Skorohod, {\em Controlled stochastic processes} (1979)

\item[\cite{MR56:13016}] W. H. Fleming and R. W. Rishel, {\em Deterministic and stochastic optimal control} (1975).

\item[\cite{MR94e:93004}] W. H. Fleming and H. M. Soner, {\em Controlled Markov processes and viscosity solutions} (1993)).

\item[\cite{MR90h:93115}] V. Borkar, {\em Optimal control of diffusion processes } (1989)).

\item[\cite{MR84k:93059}] A. Bensoussan, {\em Stochastic control by functional analysis methods}  (1982).

\item[\cite{MR93i:93001}] A. Bensoussan, {\em Stochastic control of partially observable systems} (1992). 

\item[\cite{MR82a:60062}] A. Krylov, {\em Controlled diffusion processes} (1980)
\endbib

More recent activity is in applications to finance. See for instance
\cite{MR2000h:91052}
\cite{MR3629171} 
\cite{MR2533355} 
\cite{MR2109687}. 
See \cite{kushner2013numerical} regarding numerical methods for stochastic control problems in continuous time.  

%
%
%
%
%
%
%
%
%
%
%
%
%
%
%
%
%
%
\def\cprime{$'$} \def\cprime{$'$}
  \def\polhk#1{\setbox0=\hbox{#1}{\ooalign{\hidewidth
  \lower1.5ex\hbox{`}\hidewidth\crcr\unhbox0}}} \def\cprime{$'$}
  \def\cprime{$'$} \def\cprime{$'$}
  \def\polhk#1{\setbox0=\hbox{#1}{\ooalign{\hidewidth
  \lower1.5ex\hbox{`}\hidewidth\crcr\unhbox0}}} \def\cprime{$'$}
  \def\cprime{$'$} \def\polhk#1{\setbox0=\hbox{#1}{\ooalign{\hidewidth
  \lower1.5ex\hbox{`}\hidewidth\crcr\unhbox0}}} \def\cprime{$'$}
  \def\cprime{$'$} \def\cydot{\leavevmode\raise.4ex\hbox{.}} \def\cprime{$'$}
  \def\cprime{$'$} \def\cprime{$'$} \def\cprime{$'$} \def\cprime{$'$}
  \def\cprime{$'$} \def\cprime{$'$} \def\cprime{$'$} \def\cprime{$'$}
  \def\cprime{$'$}


%
\end{document}